\def\R{\mathbb{R}}
\def\N{\mathbb{N}}
\def\leq{\leqslant}
\def\geq{\geqslant}
\def\vf{\varphi}
\newcommand{\ve}{\varepsilon}
\newcommand{\Lp}{\mathcal{L}}
\newcommand{\B}{\mathcal{B}}
\newcommand{\cJ}{\mathcal{J}}
\newcommand{\F}{\mathcal{F}}
\newcommand{\hI}{\mathring{I}}
\newcommand{\hT}{\mathring{T}}
\newcommand{\hLp}{\mathring{\Lp}}
\newcommand{\hX}{\mathring{X}}
\newcommand{\musrb}{\mu_{\mbox{\tiny SRB}}}
\newcommand{\e}{\mathfrak{e}}
\newcommand{\epoly}{\e_{\mbox{\scriptsize poly}}}
\newtheorem{Pro}{Proposition}[section]
\newtheorem{Lem}[Pro]{Lemma}
\newtheorem{Thm}[Pro]{Theorem}
\newtheorem{Rem}[Pro]{Remark}
\begin{document}

\title{Escape Rates and Singular Limiting Distributions for Intermittent Maps with Holes}
\author{Mark Demers$^1$ and Bastien Fernandez$^2$}
\date{}   
\maketitle
\begin{center}
$^1$ Department of Mathematics  \\
Fairfield University \\
Fairfield, CT 06824 USA

$^2$ Centre de Physique Th\'{e}orique\\ CNRS - Aix-Marseille Universit\'e - Universit\'e de Toulon\\ Campus de Luminy\\ 13288 Marseille CEDEX 9, France
\end{center}

\begin{abstract}
We study the escape dynamics in the presence of a hole of a standard family of intermittent maps of the unit interval with neutral fixed point at the origin (and finite absolutely continuous invariant measure). Provided that the hole (is a cylinder that) does not contain any neighborhood of the origin, the surviving volume is shown to decay at polynomial speed with time. The associated polynomial escape rate depends on the density of the initial distribution, more precisely, on its behavior in the vicinity of the origin. Moreover, the associated normalized push forward measures are proved to converge to the point mass supported at the origin, in sharp contrast to systems with exponential escape rate. Finally, a similar result is obtained for more general systems  with subexponential escape rates; namely that the Ces\`aro limit of normalized push forward measures is typically singular, invariant and supported on the asymptotic survivor set.
\end{abstract}

\leftline{\small\today.}
\bigskip

%%%%%%%%%%%%%%%%%%%%%%%%%%%%%%%%%%%%%%%%%%%%%%%%%%%%%%%%%%%%%%%%%%%%%%%%%%%%%%%%%%%%%%%%%%
%%%%%%%%%%%%%%%%%%%%%%%%%%%%%%%%%%%%%%%%%%%%%%%%%%%%%%%%%%%%%%%%%%%%%%%%%%%%%%%%%%%%%%%%%%

\section{Introduction and setting}
The study of systems with holes finds its origin in the study of Markov
chains with absorbing states \cite{ferrari, seneta, vere, yaglom} and was introduced
in deterministic dynamical systems by Pianigiani and Yorke \cite{pianigiani yorke}.
It has focused on the establishment of escape
rates and on the existence of conditionally invariant measures which describe the
asymptotic distribution of mass conditioned on non-escape. 

Since conditionally invariant measures are badly non-unique \cite{demers young},
physically relevant measures are usually characterized as the limit of normalized push forward iterates of a reference measure (usually Lebesgue). 
Such limiting distributions are typically eigenmeasures with maximal
eigenvalue of the corresponding transfer operator defined on an appropriate function
space. The maximal eigenvalue itself gives the exponential rate of escape of mass from the system.
These limiting conditionally invariant measures have properties analogous to
Sinai-Ruelle-Bowen (SRB) measures for the corresponding closed  system. Under reasonable assumptions, they converge to the SRB measure as the size of the
hole tends to zero, and this establishes stability under perturbations in the form of holes.
 
Examples 
begin with open systems admitting a finite Markov partition:  expanding maps in $\R^n$ \cite{collet ms1, pianigiani yorke},
Smale horseshoes \cite{cencova}, Anosov diffeomorphisms 
\cite{chernov m1, chernov m2},
and some unimodal maps \cite{h young}. Subsequent attempts to substitute the Markov
assumption by requiring that the holes be small have extended this analysis
to Anosov diffeomorphisms with non-Markov holes \cite{chernov mt1, chernov mt2},  to expanding maps of the interval \cite{chernov bedem, demers exp, liverani maume}, to multimodal maps satisfying a Collet-Eckmann condition \cite{bdm, demers logistic}, to piecewise
hyperbolic maps \cite{demers liverani}
and recently, to various classes of dispersing billiards \cite{dwy1, demers billiard, demers infinite}.

The characteristic common to all these systems is that the rate of escape is exponential
(the systems enjoy exponential decay of correlations before the introduction of the hole)
so that the concept of conditionally invariant measure is well-defined.  

Polynomial rates of escape have been studied numerically in some systems 
\cite{dettmann, dettmann leonel} and via formal expansions to obtain leading order terms
for the decay rate \cite{buni dett 1, buni dett 2}. However, to our knowledge, there are no analytical results regarding
limiting distributions for systems with polynomial rates of escape. 

The purpose of the present paper is to initiate the rigorous mathematical analysis of open systems with subexponential rates of escape. For simplicity, we consider a family of intermittent maps $T$ of the unit interval, with neutral fixed point at the origin \cite{LSV99}. For the hole, we take any element of a refined Markov partition for the map, not adjacent to the origin. (Of note, \cite{froyland} has also considered interval maps with neutral fixed point and very specific holes which are either a neighborhood of the neutral fixed point or its complement.) 

In this context, we first prove that the rates of escape must be polynomial for a large class of initial distributions, and this rate depends on the behavior of the initial distribution in a neighborhood of the origin. In particular, the polynomial rate of escape with respect to the SRB measure 
(for the map before the introduction of the hole) differs from that with respect to Lebesgue measure.  

In this setting, conditionally invariant measures
are not physically meaningful (although plenty still exist with any desired 
eigenvalue between 0 and 1 \cite{demers young}). 
Letting 
$\hT$ denote the map with the hole, we show that the limit of $\hT^n_*\mu/|\hT^n_*\mu|$ (NB: for the precise definition of 
this notation, see section \ref{setting} below) converges to the point mass at the neutral fixed point 
for a large class of initial distributions $\mu$ (including both Lebesgue and the SRB measures).  

These results hold independently of the size of the hole. Thus from the point of view of the physical limit $\hT^n_*\mu/|\hT^n_*\mu|$, a hole of any size is always a large perturbation in the context of subexponentially mixing systems.  In other words, the attracting property of the SRB measure under the action of $T^n_*$ is unstable with respect to small leaks in the system.

Finally, we consider more general systems with subexponential rates of
escape. The analysis of intermittent maps of the interval might suggest that the results 
are specific to this setting. Our final result Theorem~\ref{thm:mean conv} shows that this is not the 
case: in contrast to situations with exponential escape, in systems with slow escape, the (Ces\`aro) 
limit of $\hT^n_* \mu / |\hT^n_* \mu|$ for reasonable reference measures $\mu$ 
will always be singular and will typically be supported on the survivor set
of points that never escape.

%%%%%%%%%%%%%%%%%%%%%%%%%%%%%%%%%%%%%%%%%%%%%%%%%%%%%%%%%%%%%%%%%%%%%%%%%%%%%%%%%%%%%%%%%%
\subsection{Setting}
\label{setting}
We study the dynamics of the family of maps of the unit interval $T:I\to I$ where $I=[0,1)$ and $T$ is defined by (see \cite{LSV99} and Figure \ref{LSVMap})
\[
T(x)=\left\{\begin{array}{ccl}
x + 2^\gamma x^{\gamma+1}&\text{if}&x \in [0, \tfrac12)\\
2x-1&\text{if}&x \in [\tfrac12, 1)
\end{array}\right.
\]
with $0 < \gamma < 1$, after the introduction of a hole $H$ into $I$. 
\begin{figure}[ht]
\centerline{\includegraphics*[height=65mm]{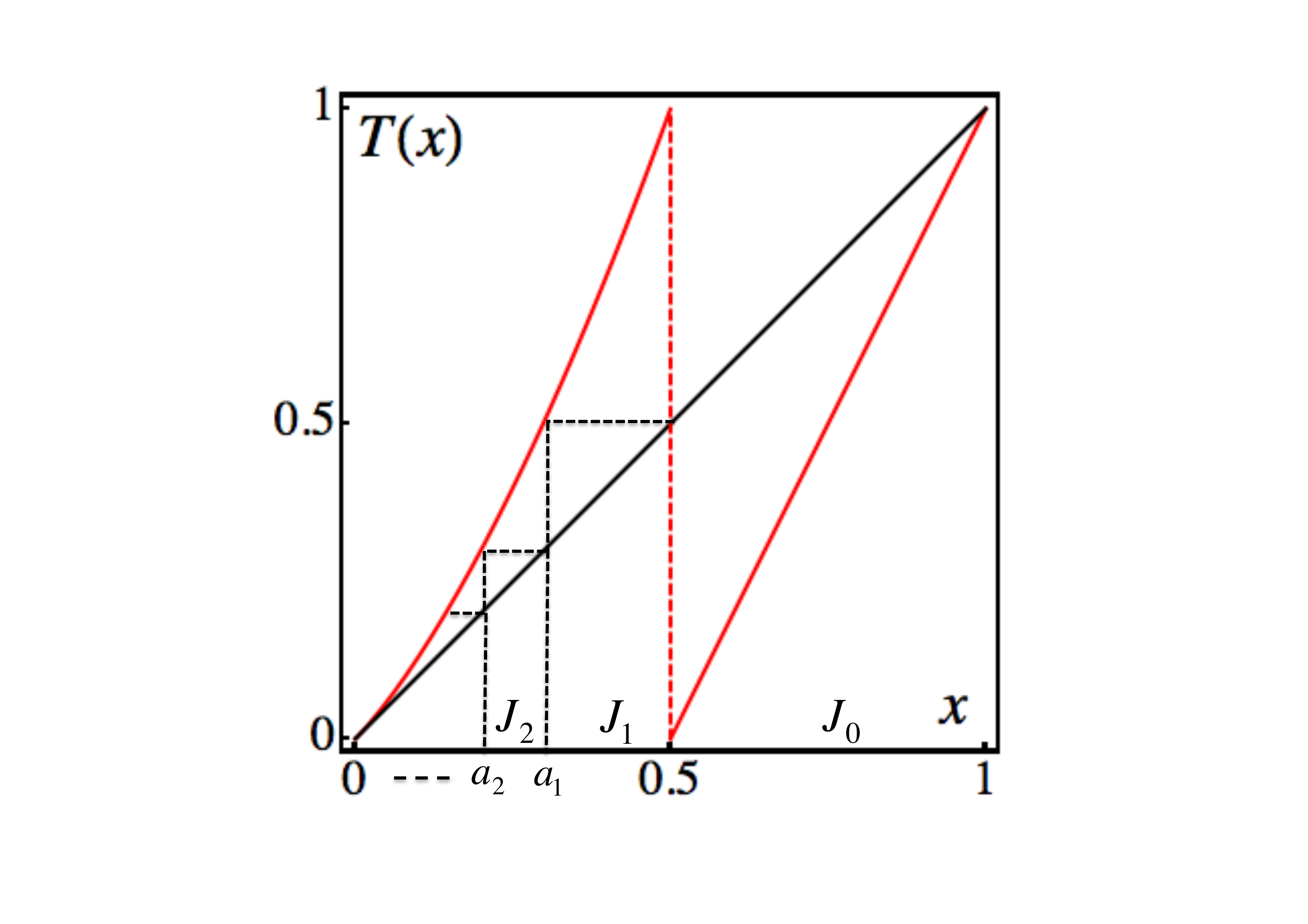}}
\caption{Graph of the map $T$ for $\gamma=\tfrac34$ (solid red branches), together with some the intervals $J_n=[a_n,a_{n-1})$.}
\label{LSVMap}
\end{figure}
 In this parameter range, $T$ preserves a finite invariant
measure, $\musrb$, absolutely
continuous with respect to Lebesgue.  

In order to define the hole, we need to introduce the (standard) finite and countable Markov partitions of $I$. The finite partition is defined by $\mathcal{P}:=\{ J_L, J_R \}$ where $J_L = [0,\tfrac12)$ and $J_R = [\tfrac12,1)$. The countable partition is defined by $\cJ:=\{ J_n \}_{n \geq 0}$ where 
\[
J_n=\left\{\begin{array}{ccl}
J_R&\text{if}&n=0\\
\left[a_n,a_{n-1}\right)&\text{if}&n \geq 1
\end{array}\right.
\]
where $a_n = T_L^{-n}(\tfrac12)$ and $T_L$ denotes the left branch of $T$. (Note that $T(J_n)=J_{n-1}$ for all $n\geq 1$, see Figure \ref{LSVMap}.)

Now, given $t\geq 0$, let $\cJ^{(t)}$ be the refined partition defined as follows 
\[
\cJ^{(t)}:=\cJ\vee\bigvee_{i=0}^tT^{-i}(\mathcal{P}).
\]
The hole $H$ is defined to be any element of $\cJ^{(\ell_H)}$ where $\ell_H\geq 0$ is arbitrary. 
We shall denote by $J_h\supseteq H$ with $h\geq0$, the element of ${\mathcal J}$ that contains $H$.

This assumption on the hole gives immediate access to a countable Markov partition for the 
open system. 
Notice that dynamically refining $\cJ$ according to $T^{-i}(\mathcal{P})$ - and not $T^{-i}(\cJ)$ - preserves $0$ as the only accumulation point of the endpoints
of elements of $\cJ^{(t)}$ for all $t \geq 0$.
In particular, this property is convenient for the conditioning arguments in the proof of 
Lemma~\ref{lem:upper bound} and for the invariance of a certain function space used 
in the proof of Theorem~\ref{thm:convergence} that implies control of the
structure of the singular limit. 
Nevertheless, we believe the assumption that 
$H$ be an element of $\cJ^{(\ell_H)}$ is purely technical and we expect our results 
to hold even when relaxed, although significant technical modifications will have to be made.

Define $\hI = I \setminus H$
and given $t\geq 0$, let $\hI^t = \bigcap\limits_{i=0}^t T^{-i}(I\setminus H)$ represent those points which have not
escaped by time $t$ (NB: we have $\hI^0=\hI$).  We refer to $\hT := T|_{\hI^1}$ as the map with a hole
and its iterates $\hT^t = T^t|_{\hI^{t}}$ ($t\geq 1$) describe the dynamics
of the open system before escape. Notice that $\cJ^{(\ell_H)}$ is also a countable Markov partition for $\hT$.

One of the quantities we will be interested in studying is the rate of escape
of mass from the open system.  Given a measure $\mu$ on $\hI$, we
define the \emph{polynomial rate of escape} with respect to $\mu$ by
\[
\epoly(\mu) = - \lim_{t \to +\infty} \frac{\log \mu(\hI^t)}{\log t},
\]
whenever the limit exists. 

We will also study the asymptotic evolution of absolutely continuous measures that are transported under the action of $\hT$. Given a measure $\mu$ on $\hI$ and $t\geq 1$, let $\hT^t_\ast\mu$ be the push forward measure under the action of $\hT^t$. 

Let $m$ denote Lebesgue measure on $I$ and given  $f\in L^1(m)$, let $\mu_f$ be the 
absolutely continuous measure  with density $f$. Let $\Lp$ be the transfer operator associated with $T$ defined by the expression
\[
\Lp f(x) = \sum_{y \in T^{-1}(x)} \frac{f(y)}{DT(y)},
\]
where $DT>0$ is the (first) derivative of $T$.
Consider the operators 
$\hLp^tf := \Lp^t(f 1_{\hI^t})$ where $\Lp^t$ are the iterates of $\Lp$
and $1_A$ denotes the characteristic function of the set $A$. 
We have $\hT^t_\ast\mu_f=\mu_{\hLp^tf}$ for all $t\geq 0$ and the change of 
variable formula implies in this case the following relation
\[
|\hLp^t f |_1=\mu_f(\hI^t),\ \forall t\geq 0,
\]
where $| \cdot |_1$ denotes the $L^1$-norm with respect to Lebesgue measure $m$. 
It will be useful for us later that, with these definitions, the usual composition property of the
transfer operators $\hLp^t$ holds, i.e. for any $j, k \geq 1$,
\[
\hLp^j(\hLp^k f)(x) = \sum_{y \in T^{-j}(x)} \frac{(\hLp^kf)(y)}{|DT^j(y)|} 1_{\hI^j}(y)
= \sum_{z \in T^{-(j+k)}(x)} \frac{f(z)}{|DT^{j+k}(z)|} 1_{\hI^j}(T^k(z)) 1_{\hI^k}(z) = \hLp^{j+k}f(x),
\]
where the last equality follows from the fact that $1_{\hI^j}(T^k(z)) 1_{\hI^k}(z)=1_{\hI^{j+k}}(z) $. 

%%%%%%%%%%%%%%%%%%%%%%%%%%%%%%%%%%%%%%%%%%%%%%%%%%%%%%%%%%%%%%%%%%%%%%%%%%%%%%%%%%%%%%%%%%
%%%%%%%%%%%%%%%%%%%%%%%%%%%%%%%%%%%%%%%%%%%%%%%%%%%%%%%%%%%%%%%%%%%%%%%%%%%%%%%%%%%%%%%%%%

\section{Statement of Results}
\label{results}

%%%%%%%%%%%%%%%%%%%%%%%%%%%%%%%%%%%%%%%%%%%%%%%%%%%%%%%%%%%%%%%%%%%%%%%%%%%%%%%%%%%%%%%%%%

\subsection{Results for the open system $\hT$}
Throughout this section, the hole $H$ is fixed as in the previous section (and so are $\ell_H$ and $h$). Our first result describes a common set of escape rates for initial distributions depending on
their behavior near 0. Following \cite{young polynomial}, the notation $u_t \approx v_t$ (resp.\ $u_t\lesssim v_t$, $u_t\gtrsim v_t$) means there exists $C>0$ such that $C^{-1}v_t\leq u_t\leq Cv_t$ (resp.\ $u_t\leq C v_t$, $u_t\geq C v_t$) for all $t$. These notations will also be employed as abbreviations for uniform estimates on sequences with multiple indices. 
\begin{Thm}
\label{thm:escape}
For any non-negative $f\in L^1(m)$ for which there exist $x_0\in (0,1)$ and $\alpha\in [0,1)$ such that
\begin{equation}
\label{eq:order}
0<\inf_{x\in (0,x_0)}x^\alpha f(x)\leq \sup_{x\in (0,1)}x^\alpha f(x)<+\infty,
\end{equation}
we have 
\[
\mu_f(\hI^t) \approx t^{-\frac{1-\alpha}{\gamma}}.
\]
Consequently, the associated measure $\mu_f$ has polynomial escape rate, $\epoly(\mu_f) = \frac{1-\alpha}{\gamma}$.
\end{Thm}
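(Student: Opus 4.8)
The plan is to estimate $\mu_f(\hI^t) = |\hLp^t f|_1$ by relating the survival of mass to the behavior of orbits near the neutral fixed point $0$. The crucial structural fact is that the hole $H$ is a cylinder not adjacent to $0$, so $H \subseteq J_h$ with $h \geq 0$; in particular, $J_0, J_1, \dots, J_{h-1}$ are disjoint from $H$ (and from $\hI^t$-escape for quite a while), and a point survives up to time $t$ essentially by staying deep inside the "intermittent tongue," i.e., inside $\bigcup_{n > c t} J_n$ for an appropriate constant, because the left branch $T_L$ drives points in $J_n$ to $J_{n-1}$ one step at a time. The standard waiting-time asymptotics for LSV maps give $a_n \approx n^{-1/\gamma}$, hence $m(J_n) \approx n^{-1/\gamma - 1}$. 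Under hypothesis \eqref{eq:order}, the density $f$ behaves like $x^{-\alpha}$ near $0$, so $\mu_f(J_n) \approx \int_{a_n}^{a_{n-1}} x^{-\alpha}\,dx \approx n^{\alpha - 1} \cdot n^{-1/\gamma} \cdot n^{-1+1} = n^{-\frac{1-\alpha}{\gamma} - 1}$ — more carefully, $\mu_f([0,a_n)) \approx \sum_{k \geq n} k^{-\frac{1-\alpha}{\gamma}-1} \approx n^{-\frac{1-\alpha}{\gamma}}$, which is already the target exponent evaluated at the relevant scale $n \approx t$.

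For the \textbf{upper bound} $\mu_f(\hI^t) \lesssim t^{-\frac{1-\alpha}{\gamma}}$, I would argue that any point $x$ with $\hT^j(x)$ defined for all $j \leq t$ must, within a bounded number of steps (bounded in terms of $h$ and $\ell_H$ only, since $H$ is a fixed cylinder), land in the interval $J_0 = [\tfrac12,1)$ after each visit there be injected back into $\bigcup_{n \geq 1} J_n$ but avoid the finitely many "bad" cylinders refining towards $H$; the only way to survive $t$ steps is to spend a total time $\gtrsim t$ inside the tail $\bigcup_{n \geq m} J_n$ for some $m \gtrsim t$, so $\hI^t \subseteq [0, a_{cm}) \cup (\text{a set that has escaped})$ up to constants, giving $\mu_f(\hI^t) \lesssim \mu_f([0, a_{ct})) \approx t^{-\frac{1-\alpha}{\gamma}}$. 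I expect this direction to use a clean combinatorial/conditioning argument on the Markov structure of $\cJ^{(\ell_H)}$, exploiting precisely the remark in the setting that $0$ remains the only accumulation point of endpoints. For the \textbf{lower bound} $\mu_f(\hI^t) \gtrsim t^{-\frac{1-\alpha}{\gamma}}$, I would exhibit an explicit surviving set: since $H$ is bounded away from $0$, there is some $N_0$ (depending on $H$) such that the interval $[0, a_{N_0})$ maps under $T_L$ entirely within the region that, for many steps, never touches $H$ — indeed points in $J_n$ with $n$ large take $n - h$ further iterates to even reach $J_h \supseteq H$, and then one still has to hit $H$ exactly. So $[0, a_{t + N_0}) \subseteq \hI^t$ (perhaps after discarding a controlled sub-cylinder at each passage near $H$, costing only a multiplicative constant), whence $\mu_f(\hI^t) \geq \mu_f([0,a_{t+N_0})) \approx (t + N_0)^{-\frac{1-\alpha}{\gamma}} \approx t^{-\frac{1-\alpha}{\gamma}}$.

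The main obstacle, I anticipate, is making the upper bound rigorous: one must show that mass cannot "hide" by repeatedly cycling through the bulk $J_0 \cup \dots \cup J_{h}$ and the near-$0$ region in a way that escapes slower than $t^{-\frac{1-\alpha}{\gamma}}$. This requires a quantitative estimate showing that every excursion away from the immediate vicinity of $0$ has a definite probability (under the transported measure) of falling into $H$ within a bounded number of steps, so that surviving a long time forces the orbit to be trapped near $0$ for a proportionate duration. Technically this is where one uses that $H$ is a full cylinder of $\cJ^{(\ell_H)}$: the transfer operator $\hLp$ acts nicely on functions adapted to this partition, and one can track the $L^1$-mass cylinder-by-cylinder, getting a renewal-type bound $\mu_f(\hI^t) \lesssim \sum_{k=0}^{t} (\text{mass still near } 0 \text{ at time } k) \cdot (\text{prob.\ of surviving the remaining } t - k \text{ steps})$ and summing. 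The asymptotics $a_n \approx n^{-1/\gamma}$ then convert this into the claimed polynomial rate, and the dependence on $\alpha$ enters solely through $\mu_f([0,a_n)) \approx n^{-\frac{1-\alpha}{\gamma}}$. Once both bounds are in place, $\epoly(\mu_f) = \frac{1-\alpha}{\gamma}$ follows immediately from the definition of the polynomial escape rate.
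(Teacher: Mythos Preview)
Your lower bound is correct and essentially identical to the paper's: since $H\subseteq J_h$, one has $\bigcup_{n>t+h}J_n\subset\hI^t$, hence $\mu_f(\hI^t)\geq\int_0^{a_{t+h}}f\,dm\gtrsim(t+h)^{-(1-\alpha)/\gamma}$.

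The upper bound, however, has a genuine gap. Your heuristic ``$\hI^t\subseteq[0,a_{ct})$ up to an escaped set'' is not correct as stated: a point can survive to time $t$ by making many \emph{moderate-length} excursions into the region near~$0$ rather than one long one (e.g.\ $k\approx\log t$ passes each of duration $\approx t/\log t$), and such points need not lie in $[0,a_{ct})$ at time~$0$. Your proposed renewal-type bound $\mu_f(\hI^t)\lesssim\sum_k(\text{mass near }0\text{ at time }k)\cdot(\text{survival for }t-k\text{ more steps})$ is circular, since the second factor is exactly the quantity you are trying to control. Finally, the claim that each excursion away from~$0$ falls into $H$ ``within a bounded number of steps'' is too strong; what is true is that the \emph{induced} dynamics away from~$0$ has exponential escape.

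The paper supplies precisely this missing ingredient. One first proves Lemma~\ref{lem:upper bound}, $m(\hI^t)\lesssim t^{-1/\gamma}$, via an induced map $S=T^R$ on $I_S=[a_{n_S},1)$ with $n_S>h$: since $S$ is uniformly expanding with a countable Markov partition satisfying a large-images condition, it has \emph{exponential} escape $m(\hI_S^t)\lesssim\sigma^t$ (Lemma~\ref{lem:exp escape}). One then decomposes $\hI^t\cap I_S$ according to the number $k$ of passes through $[0,a_{n_S})$: for $k>b\log t$ (with $b$ chosen via $\sigma$) the exponential escape of $S$ already gives $\lesssim t^{-1/\gamma}$; for $k\leq b\log t$ a careful symbolic-dynamics and bounded-distortion argument controls the measure of points whose $i$th pass has a prescribed depth. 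Only after this Lebesgue estimate is established does the paper pass to general $f$ by splitting $[0,1)$ into $[0,a_{t+h})\cup[a_{t+h},a_{n_S})\cup I_S$ and weighting by $x^{-\alpha}$ on each piece, using Lemma~\ref{lem:double} for the middle sum. Your intuition about excursions is right, but the induced-map/exponential-escape step and the decomposition by number of passes are the concrete mechanisms that make it work.
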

The proof is given in Section \ref{escape}. Of note, to obtain the lower bound on $\mu_f(\hI^t)$ is rather immediate (see relation \eqref{eq:lower}). Moreover, ergodicity of the map $T$ 
with respect to the absolutely continuous invariant measure $\musrb$
implies $\mu_f(\hI^t)\xrightarrow{t\to +\infty} 0$. Thus, most of the proof consists in proving the upper bound. This part is inspired by the proof in \cite{young polynomial} of the speed of convergence to the equilibrium measure.
 
Theorem \ref{thm:escape} implies in particular that the polynomial escape rate associated with  Lebesgue measure is given by $\epoly(m) = \frac 1\gamma$. Interestingly, since $\frac{d\musrb}{dm}(x) \approx x^{-\gamma}$ for $x$ near 0 \cite{young polynomial}, this rate differs from the one associated with the SRB measure, $\epoly(\musrb) = \frac{1-\gamma}{\gamma}$.

That the escape rate is polynomial depends on the assumptions both on $H$ and on the initial density $f$. Indeed, if the hole included a neighborhood of the neutral fixed point 0, then the corresponding open system $\hT$ would be uniformly expanding and the escape rate would be exponential for any initial density $f\in L^1(m)$; see \cite{froyland,pianigiani yorke} for the Markov case and any of \cite{chernov bedem,demers exp,liverani maume} for the non-Markov case. (Obviously, such holes do not belong to $\cJ^{(t)}$ for any $t\geq 0$.) 

Alternatively, $H = [\tfrac12, \tfrac12+d_{\ell_H})\in \cJ^{(\ell_H)}$ for some $d_{\ell_H}>0$ can also create exponential behavior for some initial densities $f$. Indeed, the map $T|_{[2d_{\ell_H}, 1)}$ is uniformly expanding %open dynamical system 
and no point in $[2d_{\ell_H},1)$ can enter the interval $[0,2d_{\ell_H})$ without first falling into $H$. Hence, the measure associated with any smooth density $f$ satisfying $f|_{[0,2d_{\ell_H})} \equiv 0$ must experience an exponential rate of escape in this case. (Such densities do not satisfy the assumption of  Theorem~\ref{thm:escape}.)

However, for any hole not blocking repeated passes through a neighborhood of
0, $\hLp^t f$ will eventually be positive in a neighborhood of 0 (and bounded) for any `typical' smooth density $f$; hence Theorem \ref{thm:escape} implies that the associated measure will experience a polynomial escape rate $ \frac 1\gamma$.

Our next result describes the limiting behavior of the sequence 
$\left\{\frac{\hT^t_\ast\mu_f}{\mu_f(\hI^t)}\right\}_{t\in\N}$ of push forward probability 
measures, for initial densities $f$ that are log-H\"older continuous on elements of the partition $\cJ^{(\ell_H)}$. To be precise, let $C^0(\cJ^{(\ell_H)})$ denote the set of functions defined in the interior of $I$ and continuous on each element of $\cJ^{(\ell_H)}$.
Given $f \in C^0(\cJ^{(\ell_H)})$, $f \geq 0$, $p\in \R^+$ and $J \in \cJ^{(\ell_H)}$, 
define the quantity $H^p_J(f)$ as follows 
\[
H^p_J(f) = \left\{\begin{array}{ccl}
0&\text{if}&f\equiv 0\ \text{on}\ J\\
+\infty&\text{if}&f(x)=0<f(y)\ \text{for some}\ x,y\in J\\
{\displaystyle \sup_{x\neq y \in J} \frac{\log f(x) - \log f(y)}{|x-y|^p}}&\text{if}&f>0\ \text{on}\ J
\end{array}\right.
\]
and let  $\| f \|_p := \sup\limits_{J \in \cJ^{(\ell_H)}} H^p_J(f)$. Consider also the set of functions,
\[
\F_p = \{ f \in C^0(\cJ^{(\ell_H)})\ :\ f \geq 0,\ |f|_1=1\ \text{and}\ \| f \|_p < +\infty\}.
\]
and its subset
\[
\F_p^0 = \{ f \in \F_p : \exists x_0 \in (0,1) \mbox{ and } \alpha \in [0,1) \mbox{ such that } 
\eqref{eq:order} \mbox{ holds} \}.
\]
of functions which are bounded away from zero in a neighborhood of 0. 

\begin{Thm}
\label{thm:convergence}
Let $f \in \F_p^0$ for some $p > 0$.
Then the sequence $\left\{\frac{\hT^t_\ast\mu_f}{\mu_f(\hI^t)}\right\}_{t\in\N}$ of absolutely continuous measures behaves asymptotically as follows 
\[
\lim_{t \to +\infty} \frac{\hT^t_\ast\mu_f}{\mu_f(\hI^t)}= \delta_0 ,
\]
where $\delta_0$ denotes the point mass at 0 and the convergence is in the weak sense. Moreover, we have
\[
\lim\limits_{t \to +\infty} \frac{\mu_f(\hI^{t+1})}{\mu_f(\hI^t)}= 1.
\]
\end{Thm}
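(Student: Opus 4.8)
The plan is to reduce both assertions to a single estimate, which I call the \emph{Main Claim}: for every fixed $N\geq 0$,
\[
\int_{\bigcup_{n=0}^{N}J_n}\hLp^t f\,dm = o\big(\mu_f(\hI^t)\big)\qquad\text{as }t\to+\infty .
\]
Granting it, the weak convergence to $\delta_0$ is routine: for $\psi\in C([0,1])$ and $\ve>0$ choose $N$ with $a_N<\ve$, so that $[a_N,1)=\bigcup_{n=0}^{N}J_n$; split $\int\psi\,d\big(\hT^t_\ast\mu_f/\mu_f(\hI^t)\big)-\psi(0)=\mu_f(\hI^t)^{-1}\int_{\hI^t}(\psi\circ\hT^t-\psi(0))\,d\mu_f$ according to whether $\hT^t$ lands in $[0,a_N)$ or in $[a_N,1)$; bound the first integral by $\sup_{[0,a_N]}|\psi-\psi(0)|$ and the second by $2\|\psi\|_\infty\,\mu_f(\hI^t)^{-1}\int_{\bigcup_{n\leq N}J_n}\hLp^t f\,dm$, which tends to $0$ by the Main Claim (recall $\mu_f(\hI^t)\approx t^{-(1-\alpha)/\gamma}>0$ by Theorem~\ref{thm:escape}); then send $N\to\infty$ and use continuity of $\psi$ at $0$. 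The ratio statement follows similarly: from $\hLp^{t+1}f=\hLp^1(\hLp^t f)=\Lp\big((\hLp^t f)\,1_{\hI^1}\big)$, the identity $I\setminus\hI^1=H\cup T^{-1}(H)$, and the fact that $\hLp^t f$ is supported in $I\setminus H$, one gets $\mu_f(\hI^t)-\mu_f(\hI^{t+1})=\int_{T^{-1}(H)}\hLp^t f\,dm$; since the left branch maps $J_{h+1}$ onto $J_h\supseteq H$ and the right branch maps $J_0$ onto $I$, we have $T^{-1}(H)\subseteq J_{h+1}\cup J_0$, hence $\mu_f(\hI^t)-\mu_f(\hI^{t+1})\leq\int_{\bigcup_{n\leq h+1}J_n}\hLp^t f\,dm=o(\mu_f(\hI^t))$, i.e. $\mu_f(\hI^{t+1})/\mu_f(\hI^t)\to1$.

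To prove the Main Claim I would use two ingredients. The first is a \emph{uniform shape control} on $\hLp^t f$: using bounded distortion of the countably many inverse branches of $\hT$ — available precisely because refining only by $T^{-i}(\mathcal P)$ keeps $0$ as the sole accumulation point of the endpoints of $\cJ^{(\ell_H)}$ — one shows that a bounded subset of $\F_p$ is invariant under $g\mapsto\hLp g/|\hLp g|_1$ (the function-space invariance mentioned in Section~\ref{setting}), so that $\sup_t\|\hLp^t f\|_p<\infty$. Consequently $\hLp^t f$ is comparable, with constants uniform in $t$, to its average over every element of $\cJ^{(\ell_H)}$; writing $b_\tau:=\hLp^\tau f(J_0)$ for the surviving mass in the base, this gives $\hLp^\tau f(E)\approx b_\tau\,m(E)$ for measurable $E\subseteq J_0$, and likewise on the finitely many elements near the base. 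The second ingredient is the countable Markov (renewal) structure: $J_n\to J_{n-1}$, the right branch spreads $J_0$ affinely over $I$, and the induced open map on $J_0$ has return-time distribution governed, up to uniform constants, by $m(J_n)\approx n^{-1/\gamma-1}$ (equivalently $a_n\approx n^{-1/\gamma}$, standard, cf.\ \cite{LSV99}).

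For the Main Claim I would then decompose the surviving mass at time $t$ according to the last time $t-s\le t$ the orbit visits $J_0$. The orbits that never visit $J_0$ in $[0,t]$ started in $\bigcup_{t<n\le t+N}J_n$ and contribute $\lesssim t^{-(1-\alpha)/\gamma-1}$ (with constant depending on the fixed $N$), since $\int_{J_n}f\approx n^{-(1-\alpha)/\gamma-1}$ by \eqref{eq:order}. Using the shape control to evaluate the fraction $\approx m(J_{s})\,b_{t-s}$ of base mass that leaves $J_0$ at time $t-s+1$ and is still en route to $J_0$ (hence in $\bigcup_{n\le N}J_n$) at time $t$, one obtains
\[
\int_{\bigcup_{n\le N}J_n}\hLp^t f\,dm\ \lesssim\ b_t\ +\ \sum_{s\geq1}m(J_s)\,b_{t-s}\ +\ t^{-(1-\alpha)/\gamma-1},
\]
with $m(J_s)\approx s^{-1/\gamma-1}$ summable. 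It then remains to estimate $b_\tau$, and the key point is that it obeys a \emph{defective} renewal equation $b_\tau=\phi(\tau)+\sum_{r\ge1}q(r)\,b_{\tau-r}$, where $\phi(\tau)\approx\langle\tau\rangle^{-(1-\alpha)/\gamma-1}$ is the first-passage density of $\mu_f$ into $J_0$ and $q(r)\approx r^{-1/\gamma-1}$ is the return-time distribution of the induced open map, which is a strict sub-probability, $\sum_r q(r)=\rho<1$: the defect $1-\rho>0$ is exactly the fraction of each long excursion from the base that escapes through the positive-measure hole $H$, and this is where the hypothesis that $H$ is bounded away from $0$ enters. Standard defective-renewal estimates — equivalently, the convolution bounds behind \cite{young polynomial} adapted to the hole — then give $b_\tau\approx\tau^{-(1-\alpha)/\gamma-1}$; substituting this into the displayed bound and summing over $s$ (splitting $s\le t/2$ and $s>t/2$, using $\sum_\tau\langle\tau\rangle^{-(1-\alpha)/\gamma-1}<\infty$) yields $\int_{\bigcup_{n\le N}J_n}\hLp^t f\,dm = O\big(t^{-(1-\alpha)/\gamma-1}\big)=o(\mu_f(\hI^t))$ by Theorem~\ref{thm:escape}, which is the Main Claim.

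The main obstacle is the pair of uniform estimates — the function-space invariance and the defective-renewal estimate for $b_\tau$ — since both require careful bounded-distortion bookkeeping over the infinitely many branches of $\hT$ that accumulate at the neutral fixed point, and the renewal estimate additionally requires two-sided control through convolutions with the heavy-tailed, defective return distribution. Neither ingredient is dispensable: without the shape control the "injection from the base" contributions can only be bounded by $b_\tau$ rather than $m(J_s)\,b_\tau$, and $\sum_s b_{t-s}$ is then of the wrong order; and without the renewal estimate one is stuck with $b_\tau\le\mu_f(\hI^\tau)\approx\tau^{-(1-\alpha)/\gamma}$, which is exactly borderline — the surviving base mass and the total surviving mass share the same naive tail exponent, and it is precisely the extra power of $\tau$ furnished by the defect $\rho<1$ that makes the base mass asymptotically negligible and forces the singular limit $\delta_0$.
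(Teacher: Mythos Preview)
Your overall strategy --- reducing both conclusions to a single ``Main Claim'' about concentration of $\hLp^t f$ near $0$, then deducing the ratio limit and weak convergence from it --- is sound, and the reductions you give are correct and clean (the derivation of $\mu_f(\hI^{t+1})/\mu_f(\hI^t)\to 1$ via $T^{-1}(H)\subseteq J_{h+1}\cup J_0$ is particularly direct). This is genuinely different from the paper's route: there the ratio limit is proved \emph{first}, via a direct estimate $m(\hI^{t-1}\setminus\hI^t)\lesssim t^{-(\gamma+1)/\gamma}\log t$ (Lemma~\ref{lem:precise}, established by a long-pass/short-pass decomposition exploiting the exponential escape of an induced uniformly expanding map, Lemma~\ref{lem:exp escape}); then compactness in $\F_p$ (Lemma~\ref{lem:simple facts}) together with the log-H\"older regularity and the ratio limit force any absolutely continuous component of a limit point to vanish on every $\bigcup_{i\le k} T^{-i}(H)$, hence everywhere by transitivity.

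The real gap in your argument is the defective-renewal step for $b_\tau$. After shape control you do not get an equation but only the approximate inequality
\[
b_\tau \;\le\; \phi(\tau) \;+\; C\sum_{r\ge 1} q(r)\,b_{\tau-r},
\]
where $C=e^{K|J_0|^p}$ is the sup/inf distortion of $\hLp^{\tau-r}f$ over $J_0$ coming from Proposition~\ref{pro:invariant}. Iterating yields $b_\tau\le\sum_{k\ge 0}C^k\,(q^{*k}*\phi)(\tau)$ only when $C\rho<1$, with $\rho=\sum_r q(r)$. But $C>1$ is a fixed constant of the map and function class, while $\rho$ can be arbitrarily close to $1$ for small holes; so for small $H$ one has $C\rho>1$ and the iteration diverges. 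The appeal to ``standard defective-renewal estimates'' and to \cite{young polynomial} does not resolve this: in \cite{young polynomial} the renewal is full ($\rho=1$) and is handled by coupling, not by iterating a one-sided inequality, and there is no contraction to exploit here. The paper sidesteps the issue by never writing a renewal inequality on $J_0$: the crucial bound (Lemma~\ref{lem:precise}) is obtained by inducing on $I_S=[a_{n_S},1)$, where the open induced map is uniformly expanding with \emph{exponential} escape, so the ``defect'' lives in a rate $\sigma<1$ that needs no comparison with any distortion constant. To repair your route you would have to either show the base distortion can be made arbitrarily close to $1$ (e.g.\ by refining the base and redoing the bookkeeping), or replace the iteration of the approximate inequality by an argument using exponential escape of an induced open map --- at which point you are essentially reproving Lemma~\ref{lem:precise}.
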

Of note, this last expression of the theorem can be alternatively formulated as 
\[
\lim\limits_{t \to +\infty} \frac{\mu_f(\hI^t\setminus\hI^{t+1})}{\mu_f(\hI^t)}= 0.
\]
Theorem \ref{thm:convergence} applies in particular to Lebesgue measure, since $1$ belongs to $\F_p^0$ for every $p\in\R^+$. (More generally, one easily checks that any H\"older 
continuous density with exponent $p$ which is bounded away from 0 on $I$ 
belongs to $\F_p^0$). Theorem \ref{thm:convergence} also applies to $\mu_{\mbox{\tiny SRB}}$ since the  density $f_{\mbox{\tiny SRB}}=\frac{d\mu_{\mbox{\tiny SRB}}}{dm}$ belongs to $\F_p^0$ for every $p\in (0,\frac{\gamma}{\gamma+1}]$ as does any normalized  density $f/|f|_1$, where $f(x)=x^{-\alpha}$ for some $\alpha\in (0,1)$ 
(see Lemma \ref{EXAMPLESSPACE} in Section \ref{space}). 

As mentioned in the introduction, this theorem implies that arbitrarily small holes in
systems with polynomial rates of escape can act as large perturbations
from the point of view of the physical limit $\frac{\hT^t_\ast\mu_f}{\mu_f(\hI^t)}$.

Furthermore, one may also consider the stability of open systems with respect to the location of a hole of a given size \cite{AB10,BY10, demers wright, KL09}. 
In this framework, consider a family of holes of the form $\{H_{\ve_i}\}$ where 
$\ve_i>0$, $\lim\limits_{i\to+\infty}\ve_i=0$ and 
$H_{\ve_i} = [\ve_i, \ve_i+\eta_i)$ all satisfy the assumptions above ($\eta_i>0$ is small). 
Then our results state that, for each $i$, the sequence $\frac{\hT^t_\ast m}{m(\hI^t)}$ 
tends to $\delta_0$ for large $t$.
However, for any $n\geq 1$ and $H_0 = [0,a_n)$, {\sl i.e.}\ $\ve_i=0$, the 
results of \cite{pianigiani yorke} imply that the escape rate is exponential 
and the sequence
$\frac{\hT^t_\ast m}{m(\hI^t)}$ tends to a conditionally invariant measure 
that is absolutely continuous with respect to Lebesgue. From this point of view, 
a discontinuity occurs when the hole goes through the neutral fixed point. 

%%%%%%%%%%%%%%%%%%%%%%%%%%%%%%%%%%%%%%%%%%%%%%%%%%%%%%%%%%%%%%%%%%%%%%%%%%%%%%%%%%%%%%%%%%

\subsection{General open systems: Consequence of a subexponential escape rate}
\label{general}
The convergence of $\frac{\hT^t_\ast\mu_f}{\mu_f(\hI^t)}$ to a singular limit as in Theorem~\ref{thm:convergence} is not limited to the map $\hT$ above. Indeed, as we show now, this phenomenon occurs very generally when the rate of escape is subexponential.

To see this, let $X$ be a compact, separable metric space
 and let $T: X\to X$ be now an arbitrary Borel measurable map.
Assume there exists a Borel probability measure $\mu$ with respect to 
which $T$ is nonsingular (but not necessarily invariant).  This will be our reference measure.

Let an open set $H \subset X$ be the hole and 
let $\hX^t = \bigcap\limits_{i=0}^t T^{-i}(X \setminus H)$ denote the survivor set up until
time $t \in \mathbb{N} \cup \{ +\infty \}$. 
As before, let $\hT := T |_{\hX^1}$. We have $\hT^t = T^t |_{\hX^t}$ for all $t \geq 1$. Our main assumption on this open system 
is that $\mu$-almost every point escapes and that the escape rate
is subexponential, i.e. we assume 
\begin{equation}
\label{eq:slow}
\mu\left(\bigcup_{i=1}^\infty int\left(\bigcup_{j=0}^i T^{-j}(H)\right)\right)=1,\quad \mu(\hX^t)>0\ \text{for all}\ t\geq 0\quad \mbox{and}\ \limsup_{t \to +\infty} \frac{\log \mu(\hX^t)}{t} = 0 ,
\end{equation}
where $int(A)$ denotes the interior of a set $A$.
In particular, this includes both polynomial and stretched exponential rates of escape (and does not assume $\liminf_{t \to +\infty} \frac{\log \mu(\hX^t)}{t} = 0$).
We remark that if $T$ is continuous, the first assumption in \eqref{eq:slow}
is equivalent to $\mu(\hX^\infty)=0$.

By assumption, all push forward (probability) measures $\frac{\hT^t_\ast\mu}{\mu(\hX^t) }$ are nonsingular with respect to $\mu$. Hence, the same is true for 
\begin{equation}
\mu_t = \frac{1}{t} \sum_{k=0}^{t-1} \frac{\hT^k_\ast\mu}{\mu(\hX^k)},
\label{CESARO}
\end{equation}
for all $t\geq 1$. As the next result shows, any limit point however must be singular. Convergence here is also understood in the weak sense.
\begin{Thm}
\label{thm:mean conv}
Any limit point $\mu_\infty$ of the sequence $\{ \mu_t \}_{t \in \N}$ 
is singular with respect to $\mu$ and is supported on $X \setminus \left(\bigcup\limits_{i=1}^\infty int\left(\bigcup\limits_{j=0}^i T^{-j}(H)\right)\right)$.  If, in addition,
$\mu_\infty$ gives zero measure to the discontinuity set of $\hT$, then 
$\mu_\infty$ is $T$-invariant and supported on $\hX^\infty$.
\end{Thm}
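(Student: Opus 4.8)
The plan is to exploit the subexponential escape rate \eqref{eq:slow} to show that the Ces\`aro averages $\mu_t$ "run away" from any fixed set of the form $E_i := int\left(\bigcup_{j=0}^i T^{-j}(H)\right)$, so that in the limit all mass concentrates on the complement $X \setminus \bigcup_i E_i =: Y$. First I would observe the key mass-transport identity: a point of $\hX^k$ whose image under $\hT^k$ lands in $E_i$ must have escaped (fallen into $H$) within the next $i$ steps, i.e.\ $\hT^k_*\mu(E_i) \leq \mu(\hX^k) - \mu(\hX^{k+i})$, hence
\[
\frac{\hT^k_*\mu(E_i)}{\mu(\hX^k)} \leq 1 - \frac{\mu(\hX^{k+i})}{\mu(\hX^k)}.
\]
The subexponential rate gives $\limsup_{t\to\infty} t^{-1}\log\mu(\hX^t) = 0$; I would use this to show that the Ces\`aro average of the right-hand side over $k = 0,\dots,t-1$ tends to $0$ as $t\to\infty$ for each fixed $i$. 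Concretely, $\frac{1}{t}\sum_{k=0}^{t-1}\left(1 - \frac{\mu(\hX^{k+i})}{\mu(\hX^k)}\right)$ telescopes after grouping into arithmetic progressions modulo $i$ into roughly $\frac{i}{t}\bigl(\log\mu(\hX^0) - \log\mu(\hX^{\text{near }t})\bigr)$-type quantities (more precisely, using $1-x \leq -\log x$ for $x\in(0,1]$, the sum is bounded by $\sum$ of $\log$-differences which telescopes), and the $\limsup$ condition forces this to vanish. Therefore $\liminf_{t\to\infty}\mu_t(X\setminus E_i) = 1$ for every $i$. Since each $E_i$ is open, the Portmanteau theorem gives, for any weak limit point $\mu_\infty$, that $\mu_\infty(E_i) \leq \liminf_t \mu_t(E_i) = 0$; as this holds for all $i$ and $\bigcup_i E_i$ is a countable increasing union, $\mu_\infty\left(\bigcup_i E_i\right)=0$, i.e.\ $\mu_\infty$ is supported on $Y$. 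Singularity with respect to $\mu$ is then immediate from the first assumption in \eqref{eq:slow}, namely $\mu\left(\bigcup_i E_i\right) = 1$, so $\mu$ and $\mu_\infty$ live on disjoint sets.

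For the second statement, suppose $\mu_\infty$ gives zero mass to the discontinuity set $D$ of $\hT$ (equivalently of $T$, restricted appropriately). I would first note $Y \subseteq \hX^\infty$: a point in $Y$ avoids $int(\bigcup_{j=0}^i T^{-j}(H))$ for all $i$; one must check this is enough to conclude the point (and its forward orbit) never enters $H$ — this is where the openness of $H$ and the structure of the $E_i$ matter, and under continuity it is exactly the equivalence remarked after \eqref{eq:slow}. Hence $\mu_\infty$ is supported on $\hX^\infty$, which is forward-invariant under $T$ (indeed $T(\hX^\infty) \subseteq \hX^\infty$), and on $\hX^\infty$ the map $\hT$ agrees with $T$ and is well-defined. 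To get $T$-invariance of $\mu_\infty$, I would pass to the limit in the near-invariance of the $\mu_t$: for a bounded continuous test function $\vf$,
\[
\int \vf\circ T \, d\mu_t - \int \vf \, d\mu_t = \frac{1}{t}\sum_{k=0}^{t-1}\left(\frac{\hT^{k+1}_*\mu(\vf)}{\mu(\hX^k)} - \frac{\hT^k_*\mu(\vf)}{\mu(\hX^k)}\right),
\]
after replacing $\vf\circ T$ inside the push-forwards. Using $\mu(\hX^{k+1})/\mu(\hX^k)\to 1$ along the relevant subsequence — which itself follows from the Ces\`aro estimate above with $i=1$, or can be extracted — the telescoping sum is $O(1/t)$ plus error terms that vanish, so $\int\vf\circ T\,d\mu_\infty = \int\vf\,d\mu_\infty$. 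The hypothesis $\mu_\infty(D)=0$ is precisely what licenses the exchange of the weak limit with composition by the (only a.e.\ continuous) map $T$, via a standard argument that bounded continuous $\vf\circ T$ is $\mu_\infty$-a.e.\ continuous.

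The main obstacle is the Ces\`aro telescoping estimate: one only controls the $\limsup$ of $t^{-1}\log\mu(\hX^t)$, not the $\liminf$, so the sequence $a_t := -\log\mu(\hX^t)$ is subadditive-ish but not monotone in a useful quantitative way, and I need $\frac{1}{t}\sum_{k<t}(a_{k+i} - a_k) \to 0$. The trick is that this Ces\`aro sum of $i$-step increments equals (up to boundary terms of size $O(i/t)\cdot\max$) a quantity controlled by $\frac{a_t - a_0}{t}\cdot i$, and $\frac{a_t}{t}\to 0$ along a full-density set of times is not enough — but $\limsup \frac{a_t}{t}=0$ means $a_t = o(t)$, which is exactly enough to kill $\frac{1}{t}(a_{t+O(i)} - a_0) \to 0$. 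Making the partial-sum bookkeeping rigorous (handling the $i$ residue classes mod the shift, and the last incomplete block) is the only genuinely fiddly point; everything else is Portmanteau plus the elementary mass-transport inequality.
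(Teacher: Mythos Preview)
Your telescoping idea via $1-x \leq -\log x$ is clean, but there is a genuine gap. You claim ``$\limsup \frac{a_t}{t}=0$ means $a_t = o(t)$'', but this mis-reads the hypothesis: with $a_t = -\log\mu(\hX^t) \geq 0$, the condition $\limsup_{t\to\infty}\frac{\log\mu(\hX^t)}{t} = 0$ in \eqref{eq:slow} translates to $\liminf_{t\to\infty}\frac{a_t}{t} = 0$, \emph{not} $\limsup$. (The paper explicitly stresses after \eqref{eq:slow} that it does \emph{not} assume $\liminf_{t\to\infty}\frac{\log\mu(\hX^t)}{t} = 0$, which is exactly what would yield $a_t = o(t)$.) Your telescoped bound
\[
\mu_t(E_i) \;\leq\; \frac{1}{t}\sum_{k=0}^{t-1}(a_{k+i}-a_k) \;=\; \frac{1}{t}\Big(\sum_{k=t}^{t+i-1}a_k - \sum_{k=0}^{i-1}a_k\Big) \;\leq\; \frac{i\, a_{t+i-1}}{t}
\]
therefore need not tend to zero along an \emph{arbitrary} convergent subsequence $\mu_{t_j} \to \mu_\infty$; it only vanishes along the special subsequence where $a_t/t \to 0$. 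So your argument shows that \emph{some} limit point is supported on $Y$, not that \emph{every} limit point is. The same defect reappears in part (B), where you invoke ``$\mu(\hX^{k+1})/\mu(\hX^k)\to 1$ along the relevant subsequence'': this ratio need not converge at all.

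The paper's route avoids summing the increments and instead works termwise with an extra parameter. Lemma~\ref{lem:rate} shows that for each fixed $\lambda \in (0,1)$ the exceptional set $A_\lambda = \{k : \beta_k \leq \lambda\}$ has zero density; off finitely many translates of $A_\lambda$ one has $\beta_k\cdots\beta_{k+i-1} > \lambda^i$, so the $k$th summand in the Ces\`aro average is at most $1-\lambda^i$, giving $\mu_\infty(E_i) \leq 1-\lambda^i$ for every limit point, and one then sends $\lambda \uparrow 1$. The invariance argument likewise uses the zero-density set to absorb the terms with $\beta_{k-1}$ far from $1$. Your telescoping would be a genuinely simpler alternative under the stronger hypothesis $\lim_{t\to\infty}\frac{\log\mu(\hX^t)}{t}=0$, but not under \eqref{eq:slow} as stated. (A smaller point: your inclusion $Y \subseteq \hX^\infty$ goes the wrong way in general --- one only has $\hX^\infty \subseteq Y$, with equality when each $T^{-j}(H)$ is open, e.g.\ for continuous $T$; the paper instead deduces $\mu_\infty(\hX^\infty)=1$ \emph{after} establishing $T$-invariance, via $\mu_\infty(T^{-j}(H)) = \mu_\infty(H) = 0$.)
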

Interestingly, the averaging method presented here does not
work so easily in the case of exponential escape (unless a priori one knows that the
limit of $\frac{\hT^t_\ast\mu}{\mu(\hX^t)}$ itself exists). Indeed, in this case,  
the ratio of consecutive normalizations $\frac{\mu(\hX^{t+1})}{\mu(\hX^t)}$
does not converge to 1 and the terms appearing in the sum must be weighed to
compensate for this.  For an example of an averaging method in the exponential
case under stronger assumptions, see \cite{collet existence, corrigendum}.  
Theorem~\ref{thm:mean conv} is proved in Section~\ref{cesaro}.

%%%%%%%%%%%%%%%%%%%%%%%%%%%%%%%%%%%%%%%%%%%%%%%%%%%%%%%%%%%%%%%%%%%%%%%%%%%%%%%%%%%%%%%%%%
%%%%%%%%%%%%%%%%%%%%%%%%%%%%%%%%%%%%%%%%%%%%%%%%%%%%%%%%%%%%%%%%%%%%%%%%%%%%%%%%%%%%%%%%%%

\section{Proofs}
\label{proofs 12}

%%%%%%%%%%%%%%%%%%%%%%%%%%%%%%%%%%%%%%%%%%%%%%%%%%%%%%%%%%%%%%%%%%%%%%%%%%%%%%%%%%%%%%%%%%

\subsection{Preliminary estimates}
In proving the theorems, we shall repeatedly use the following bounds \cite{young polynomial}
\begin{equation}
\label{eq:spacing}
 a_n \approx n^{-\frac 1\gamma} \qquad \mbox{and} 
\qquad  |J_n| \approx n^{-\frac{\gamma+1}{\gamma}}.
\end{equation}
We shall also rely on the following lemma.
\begin{Lem}
\label{lem:distortion}
Given $n\geq 0$ and $t\geq 1$, let $x,y \in \hI^t$ lie in the same element of $\cJ^{(t+1)}$ such that $T^t(x), T^t(y) \in J_n$. Then we have  
\begin{enumerate}
  \item[(a)]  $\frac{1}{DT^t(x)} \lesssim \left(\frac{n}{n+t} \right)^\frac{\gamma+1}{\gamma}$;
  \item[(b)]  for any $p \in (0, \frac{\gamma}{\gamma+1}]$, one has $\displaystyle \left| \log \frac{DT^t(x)}{DT^t(y)} \right| \lesssim |T^t(x)-T^t(y)|^p$.
\end{enumerate}
\end{Lem}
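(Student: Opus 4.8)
The plan is to reduce everything to the behavior of the left branch $T_L$ near the origin, since that is where all the non-uniformity lives. Fix $x \in \hI^t$ with $T^t(x) \in J_n$, and track its orbit backward: write $x_j = T^j(x)$ for $0 \le j \le t$, so $x_t \in J_n$. Because $x,y$ lie in the same element of $\cJ^{(t+1)}$, the itinerary of $x$ and $y$ through the partition $\mathcal{P}$ (left vs.\ right branch) agrees for the first $t$ steps, and $x_t, y_t$ land in the same $J_n$; in particular $x_j$ and $y_j$ always lie on the same branch. The key geometric fact, standard for LSV maps, is that an orbit segment spends long consecutive blocks of time in the left-branch cylinders $J_1, J_2, \dots$ near $0$ and these blocks control $DT^t$. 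Concretely, if the orbit enters $J_m$ at some time and then follows the left branch, it passes through $J_m, J_{m-1}, \dots, J_1$ before leaving (since $T(J_k) = J_{k-1}$), contributing a telescoping product. For part (a), I would use this to show $\frac{1}{DT^t(x)} \lesssim \frac{|J_{n+t}|\,\text{(correction)}}{\dots}$; more precisely, the cleanest route is: the branch of $T^t$ containing $x$ maps some interval $\omega$ (an element of $\cJ^{(t+1)}$, up to refinement by $\mathcal{P}$) diffeomorphically onto a subinterval of $J_n$, and since orbits that end in $J_n$ at time $t$ began no closer to $0$ than roughly $a_{n+t}$, one gets $\frac{1}{DT^t(x)} = \frac{|T^t(\omega)|}{|\omega|}\cdot(\text{bounded distortion}) \lesssim \frac{|J_n|}{|J_{n+t}|}\cdot\frac{\dots}{\dots}$. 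Plugging in the spacing estimates \eqref{eq:spacing}, $|J_n| \approx n^{-\frac{\gamma+1}{\gamma}}$ and $|J_{n+t}| \approx (n+t)^{-\frac{\gamma+1}{\gamma}}$, would need to combine with $a_{n+t} \approx (n+t)^{-1/\gamma}$; the target $\left(\frac{n}{n+t}\right)^{\frac{\gamma+1}{\gamma}}$ is exactly $|J_n|/|J_{n+t}|$, so the claim reduces to a bounded-distortion comparison of $|\omega|$ with $|J_{n+t}|$, which holds because the last-entry-to-$0$-neighborhood structure forces $\omega$ to be comparable to a full cylinder of depth $n+t$.

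For part (b), the standard Adler-type distortion argument applies. Write
\[
\log \frac{DT^t(x)}{DT^t(y)} = \sum_{j=0}^{t-1} \bigl( \log DT(x_j) - \log DT(y_j) \bigr),
\]
and on each branch $DT$ is $C^1$ (on the right branch it is constant, so those terms vanish) with $\left|\frac{D^2T}{DT}\right|$ bounded on $[\delta,\tfrac12)$ but blowing up like $x^{\gamma-1}$ as $x \to 0$. The mean value theorem gives $|\log DT(x_j) - \log DT(y_j)| \lesssim x_j^{\gamma-1}|x_j - y_j|$ (using $DT(x) = 1 + (\gamma+1)2^\gamma x^\gamma$ on the left branch, so $\frac{D^2T}{DT} \approx x^{\gamma-1}$ near $0$). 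Then I would use the contraction-under-backward-iteration estimate: $|x_j - y_j| \lesssim \frac{DT^j(x_j)}{DT^t(x)}|x_t - y_t|$ — here applying part (a)-type bounds to the orbit segment from time $j$ to time $t$ — and the fact that $x_j^{\gamma-1}$ is largest precisely when $x_j$ is near $0$, i.e.\ in the deep cylinders, where however $DT^{t-j}$ is correspondingly close to $1$, so the two effects compete. Summing the geometric-type series over the excursions near $0$, together with the uniform expansion away from $0$, yields a bound $\lesssim |x_t - y_t|^p$ for $p \le \frac{\gamma}{\gamma+1}$; the exponent $\frac{\gamma}{\gamma+1}$ arises because the worst contribution comes from a single long excursion of length $\sim m$ entering at depth $\sim m$, where $|x_t-y_t| \gtrsim |J_m| \approx m^{-\frac{\gamma+1}{\gamma}}$ while the distortion sum over that excursion is $\approx m^{-1}$ times bounded terms, and $m^{-1} = (m^{-\frac{\gamma+1}{\gamma}})^{\frac{\gamma}{\gamma+1}}$.

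The main obstacle is organizing the bookkeeping of part (b): one must partition the time interval $\{0,1,\dots,t-1\}$ into the successive excursions into and out of a fixed neighborhood of $0$, control the contribution of each excursion using the $J_k \to J_{k-1}$ structure and the spacing bounds \eqref{eq:spacing}, and check that summing over all excursions still produces the single power $|T^t(x)-T^t(y)|^p$ rather than a $t$-dependent constant. The cleanest way to do this is probably to prove a one-step "pulling back" inequality — that if $\omega \in \cJ^{(t+1)}$ with $T^t(\omega) \subset J_n$, then $|\omega| \lesssim |J_{n+t}|$ and the distortion of $T^t$ on $\omega$ is uniformly bounded, with the logarithmic modulus of continuity as stated — and induct on $t$, peeling off either a single left-branch step (which shifts $n \mapsto n+1$ and leaves $t \mapsto t-1$) or a right-branch step. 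Part (a) is then essentially immediate from $|\omega| \lesssim |J_{n+t}|$ and the definition of the transfer operator, and part (b) follows from the uniform distortion bound by the choice $p \le \frac{\gamma}{\gamma+1}$ dictated by \eqref{eq:spacing}.
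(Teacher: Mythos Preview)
Your plan is essentially correct, and the intuition you give for the exponent $\tfrac{\gamma}{\gamma+1}$ (balancing a distortion contribution $\sim m^{-1}$ against $|J_m|\sim m^{-(\gamma+1)/\gamma}$) is exactly right. The paper's proof, however, short-circuits almost all of your excursion bookkeeping with one observation: among all length-$t$ orbits landing in $J_n$, the minimum of $DT^t$ is realised by the pure left-branch orbit $J_{n+t}\to J_{n+t-1}\to\cdots\to J_n$, since any right-branch step contributes a factor $2$ and can only increase the product. The paper therefore substitutes the worst case $n_i=n+t-i$ directly into the sum $\sum_i B_{n_i}|T^ix-T^iy|$, obtaining $\lesssim\sum_i (n+t-i)^{-2}\le n^{-1}$; part~(a) then follows from $DT^t(x)\gtrsim |J_n|/|J_{n+t}|$. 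For (b), the same substitution after writing $|T^ix-T^iy|=|T^ix-T^iy|^{1-p}\cdot\frac{|T^ix-T^iy|^p}{|T^tx-T^ty|^p}\cdot|T^tx-T^ty|^p$ collapses everything to $n^{p(\gamma+1)/\gamma-1}|T^tx-T^ty|^p$, bounded once $p\le\tfrac{\gamma}{\gamma+1}$. No induction and no decomposition into excursions is needed: your ``main obstacle'' dissolves because the single-excursion orbit is already the extremal one.

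A few local slips worth fixing. You have the ratio reversed: $\bigl(\tfrac{n}{n+t}\bigr)^{(\gamma+1)/\gamma}\approx |J_{n+t}|/|J_n|$, not the other way, and correspondingly $1/DT^t(x)\approx |\omega|/|T^t(\omega)|$. Your claim that $\omega$ is ``comparable to a full cylinder of depth $n+t$'' is false in general---if the orbit takes right-branch steps, $\omega$ can be arbitrarily smaller than $J_{n+t}$---and the companion bound $|T^t(\omega)|\gtrsim|J_n|$ you would need is not obvious either; the geometric route via $|\omega|/|T^t(\omega)|$ is thus more delicate than the direct lower bound on $\prod_i DT(x_i)$. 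Finally, invoking bounded distortion in (a) before having proved it is mildly circular; the paper avoids this by establishing the uniform distortion bound \emph{first} (the intermediate $\le n^{-1}$ estimate) and only then deducing~(a).
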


\begin{proof}
(a) Given $i\in \{0,\cdots,t-1\}$, let $J_{n_i}$ denote the element of $\cJ$ containing $T^i(x)$ and $T^i(y)$. Let also $B_{n_i}=2^\gamma\gamma(\gamma+1)a_{n_i}^{\gamma-1}$ be the maximum value of $|D^2T|$ and $M_{n_i}^{(j)}$ be the minimum value of $|DT^j|$ on $J_{n_i}$, respectively. We have 
\begin{align}
\left| \log \frac{DT^t(x)}{DT^t(y)} \right|&
\leq  \sum_{i=0}^{t-1} |\log DT\circ T^i(x) - \log DT\circ T^i(y)|\nonumber\\
&\leq \sum_{i=0}^{t-1} \frac{B_{n_i}}{M_{n_i}^{(1)}}|T^i(x) - T^i(y)|\leq \sum_{i=0}^{t-1} B_{n_i}|T^i(x) - T^i(y)|  \, .  \label{eq:first dist}
\end{align}
Following \cite{young polynomial}, we write $|T^t(x) - T^t(y)|=|DT^{t-i}(z)| |T^i(x) - T^i(y)|$ for some $z\in J_{n_i}$ and use that the expansion $DT^{t-i}(z)$ decreases as $n_i$ increases to conclude that the last sum here is maximised for $n_i=n+t-i$, i.e. $T^i(x),T^i(y)\in J_{n+t-i}$. Using equation \eqref{eq:spacing} we obtain 
\[
\left| \log \frac{DT^t(x)}{DT^t(y)} \right|\lesssim  \sum_{i=0}^{t-1} (n+t-i)^\frac{1-\gamma}{\gamma}(n+t-i)^{-\frac{\gamma+1}{\gamma}}= \sum_{i=0}^{t-1} (n+t-i)^{-2}\leq n^{-1},
\]
where the last inequality follows from $\sum\limits_{i=n+1}^{\infty}i^{-2}\leq\int\limits_{n}^{+\infty}x^{-2}dx$.

Now, $T^t(x)$ has no preimage in $\bigcup\limits_{i > n+t} J_i$, so the weakest expansion occurs when
$x \in J_{n+t}$. The previous distortion estimate implies %the existence of $C'>0$ such that 
\[
DT^t(x) \gtrsim \frac{|J_n|}{|J_{n+t}|}. 
\]
Using equation \eqref{eq:spacing} again, statement (a) easily follows. 
\medskip

\noindent
(b)  
Adopting the same notation as in (a) and starting from \eqref{eq:first dist}, we 
fix $p \in (0, \frac{\gamma}{\gamma+1}]$ and write
\[
\begin{split}
\left| \log \frac{DT^t(x)}{DT^t(y)} \right|
& \leq \sum_{i=0}^{t-1} B_{n_i}|T^i(x)-T^i(y)|^{1-p} 
\frac{|T^i(x) - T^i(y)|^p}{|T^t(x) - T^t(y)|^p} |T^t(x) - T^t(y)|^p \\
& \leq \sum_{i=0}^{t-1} \frac{B_{n_i} |J_{n_i}|^{1-p} }{(M^{(t-i)}_{n_i})^p} |T^t(x) - T^t(y)|^p .
\end{split}
\]
Using statement (a) and, as in the previous proof, that the worst case scenario in the upper bounds of equation \eqref{eq:spacing} occurs for $n_i=n+t-i$, we obtain 
\[
\left| \log \frac{DT^t(x)}{DT^t(y)} \right| \lesssim n^{p\frac{\gamma+1}{\gamma}}\sum_{i=0}^{t-1} (n+t-i)^{-2} |T^t(x)-T^t(y)|^p \leq n^{p\frac{\gamma+1}{\gamma}-1}|T^t(x)-T^t(y)|^p
\]
and statement (b) follows from the assumption $p\leq \frac{\gamma}{\gamma+1}$. 
\end{proof}

Finally, on several occasions in the proofs, we shall require the following estimate.

\begin{Lem}
\label{lem:double}
\[
\sum_{n=n_0+1}^{t+n_1}n^{-a}(t-n+n_0)^{-b}\lesssim t^{-\min\{a,b\}}
\]
for every pair $a,b> 1$, every pair $n_0,n_1$ such that $n_0>n_1+1$, and for all $t$ such that $n_0+1\leq t+n_1$.
\end{Lem}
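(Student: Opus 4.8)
The goal is to prove Lemma~\ref{lem:double}. The plan is to split the sum at the midpoint of the range of summation and estimate each half by treating the ``slowly varying'' factor as essentially constant there.

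\medskip

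\noindent\textbf{Plan.}
First I would make the substitution $k = n - n_0$, so that the sum becomes
\[
S := \sum_{k=1}^{t+n_1-n_0} (k+n_0)^{-a}\,(t-k+2n_0 - n_0)^{-b}
   = \sum_{k=1}^{t+n_1-n_0} (k+n_0)^{-a}\,(t-k+n_0)^{-b},
\]
where the new upper limit $t+n_1-n_0$ is $\geq 1$ by hypothesis; note also that since $n_0 > n_1+1$ we have $t - k + n_0 \geq n_0 - n_1 \geq 2 > 0$ for every $k$ in the range, so both factors are well-defined and positive. Now set $M := t+n_1-n_0$ and split the sum into $k \leq M/2$ and $k > M/2$. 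On the first block, $t - k + n_0 \geq t/2 + (\text{lower order}) \gtrsim t$, so that $(t-k+n_0)^{-b} \lesssim t^{-b}$, and the remaining factor sums as $\sum_{k\geq 1}(k+n_0)^{-a} \leq \sum_{k\geq 1} k^{-a} < \infty$ since $a > 1$; this block therefore contributes $\lesssim t^{-b}$. On the second block, symmetrically, $k + n_0 \geq k \gtrsim t$ (using $M \approx t$ and $n_0$ fixed relative to the asymptotics in $t$), hence $(k+n_0)^{-a}\lesssim t^{-a}$, while $\sum_k (t-k+n_0)^{-b}$ is bounded by a convergent tail $\sum_{j \geq 1} j^{-b} < \infty$ because $b>1$; this block contributes $\lesssim t^{-a}$. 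Combining, $S \lesssim t^{-a} + t^{-b} \lesssim t^{-\min\{a,b\}}$, which is the claim.

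\medskip

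\noindent\textbf{Main obstacle.}
The only delicate point is bookkeeping with the shift parameters $n_0, n_1$: one must check that when $k$ ranges up to $M/2$ the complementary index $t-k+n_0$ really is $\gtrsim t$ with a constant independent of $n_0, n_1$ (and similarly for the other block). This is where the hypothesis $n_0 > n_1 + 1$ is used, to guarantee $M = t+n_1-n_0 \leq t$ while also keeping $M \geq 1$; then $t - k + n_0 \geq t - M/2 + n_0 = t/2 + (n_0 - n_1)/2 + n_0 \geq t/2$, and $k + n_0 \geq M/2 + n_0 = (t+n_1+n_0)/2 \geq t/2$ on the second block (the last inequality since $n_0 + n_1 \geq 0$). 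Since the implicit constants in $\sum_{k\geq 1}k^{-a}$ and $\sum_{j\geq 1}j^{-b}$ depend only on $a$ and $b$, the estimate is uniform as required. Everything else is the routine comparison of a sum with an integral and is no harder than the tail bound $\sum_{i>n} i^{-2} \leq \int_n^\infty x^{-2}\,dx$ already invoked in the proof of Lemma~\ref{lem:distortion}.
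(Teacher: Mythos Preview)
Your approach is essentially the paper's: split the range near its midpoint, bound the slowly-varying factor by a constant times $t^{-b}$ (resp.\ $t^{-a}$) on each half, and use $a,b>1$ to make the remaining factor summable; the paper does the same thing via an integral split at $x=t/2$ rather than a sum split at $k=M/2$, which is purely cosmetic. One harmless slip to fix: after the substitution $k=n-n_0$ the second factor is $(t-k)^{-b}$, not $(t-k+n_0)^{-b}$ (you computed $t-(k+n_0)+n_0=t-k$); your inequalities $t-k\geq t/2$ on the first block and $\sum (t-k)^{-b}\leq\sum_{j\geq n_0-n_1}j^{-b}<\infty$ on the second still go through verbatim with this correction.
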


\begin{proof}
 According to the inequality $\sum\limits_{n=n_0+1}^{t+n_1}f(n)\leq\int\limits_{n_0}^{t+n_1+1}f(x)dx$ which holds for every $f\geq 0$, we estimate the sum via the following integral,
\[
\int_{n_0}^{t+n_1+1} x^{-a}(t + n_0 - x)^{-b} \, dx
= \int_{n_0}^{t/2} x^{-a}(t + n_0 - x)^{-b} \, dx
+ \int_{t/2}^{t+n_1+1} x^{-a}(t + n_0 - x)^{-b} \, dx . 
\]
In the first integral, the second factor in the integrand is at most $(n_0 + \frac t2)^{-b}$ while
the first factor integrates to something less than $\frac{n_0^{-(a-1)}}{a-1}$.
In the second integral, the first factor is at most $(\frac t2)^{-a}$ while the
second factor integrates to something less than 
$\frac{(n_0-n_1-1)^{-(b-1)}}{b-1}$. The desired estimate immediately follows.
\end{proof}

%%%%%%%%%%%%%%%%%%%%%%%%%%%%%%%%%%%%%%%%%%%%%%%%%%%%%%%%%%%%%%%%%%%%%%%%%%%%%%%%%%%%%%%%%%

\subsection{Estimating escape rates - proof of Theorem~\ref{thm:escape}}
\label{escape}
Recall that $H$ is a cylinder in $\cJ^{(\ell_H)}$ and $H \subseteq J_h$ for some $h\geq 0$. The main estimate of this section is the following lemma.
\begin{Lem}
\label{lem:upper bound}
$m(\hI^t) \lesssim t^{-\frac 1\gamma}$.
\end{Lem}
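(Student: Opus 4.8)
\textbf{Proof plan for Lemma~\ref{lem:upper bound}.}
The plan is to exploit the Markov structure of $\hT$ together with the distortion control of Lemma~\ref{lem:distortion} and the summation estimate of Lemma~\ref{lem:double}. The starting point is the identity $m(\hI^t)=|\hLp^t 1|_1 = \int_I \hLp^t 1\,dm$, so it suffices to bound the mass of $\hLp^t 1$. I would decompose this mass according to which element $J_n$ of the countable partition $\cJ$ the point lands in at time $t$, writing $m(\hI^t)=\sum_{n\geq 0}\int_{J_n}\hLp^t 1\,dm$, and estimate the ``tail'' $n$ large and the ``bulk'' $n$ bounded separately. The contribution of large $n$ is governed directly by $|J_n|\approx n^{-(\gamma+1)/\gamma}$ and a crude bound $\hLp^t 1\lesssim 1$ on such cells, which after summing over $n\gtrsim t$ already gives a term of order $t^{-1/\gamma}$; so the real work is the bulk.

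For the bulk, I would pull back: a point landing in $J_n$ at time $t$ has, for each relevant preimage branch, travelled through the partition $\cJ$ along some itinerary, and since $\hT$ is Markov for $\cJ^{(\ell_H)}$ every such branch is onto. The key quantitative input is Lemma~\ref{lem:distortion}(a): for $x\in\hI^t$ with $T^t(x)\in J_n$ one has $1/DT^t(x)\lesssim (n/(n+t))^{(\gamma+1)/\gamma}$ \emph{when} the itinerary spends all its early time near $0$ (i.e.\ passes through $J_{n+t},J_{n+t-1},\dots$); more generally the contraction of a branch whose last visit to the neutral-fixed-point region ended $s$ steps ago and then entered the ``good'' (uniformly expanding) part is exponentially small in $s$ times a factor of the above type with $t$ replaced by the length of the final intermittent sojourn. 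Summing the geometric factor over return times to the base, the dominant branches are exactly those captured by Lemma~\ref{lem:distortion}(a), and one is left with $\int_{J_n}\hLp^t 1\,dm \lesssim |J_n|\cdot(n/(n+t))^{(\gamma+1)/\gamma}\cdot(\text{correction from renewal sum})$. Here the fact that $H$ does not touch $0$ is essential: it guarantees that orbits can repeatedly pass near $0$, so this renewal/Kac-type sum is genuinely the one appearing in the closed system (as in \cite{young polynomial}), rather than being truncated.

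Summing over $n$ then reduces, via Lemma~\ref{lem:double} with the roles of $a=b=(\gamma+1)/\gamma>1$ (using $0<\gamma<1$), to a bound of the form $\sum_n n^{-(\gamma+1)/\gamma}\cdot(\text{something})^{-(\gamma+1)/\gamma}\lesssim t^{-1/\gamma}$ after accounting for the extra powers of $n$ produced by $|J_n|$ and by the spacing estimates \eqref{eq:spacing}; in other words the exponent $-1/\gamma$ is precisely $-\min\{a,b\}/\big((\gamma+1)/\gamma\big)\cdot(\text{bookkeeping})$ coming out of that lemma, matching the known convergence-to-equilibrium rate for the closed LSV map. I expect the main obstacle to be the renewal step: carefully organizing the sum over all Markov itineraries of length $t$ ending in $J_n$, separating each into a last intermittent sojourn plus a hyperbolic remainder, and showing the hyperbolic part contributes only a bounded multiplicative constant (uniformly in $t$ and $n$) so that the whole sum is dominated by the single ``worst'' itinerary already estimated in Lemma~\ref{lem:distortion}(a). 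The conditioning arguments alluded to after the definition of $\cJ^{(t)}$ — that $0$ remains the only accumulation point of endpoints — are what make this bookkeeping clean, since they let one condition $\hLp^t 1$ on elements of $\cJ^{(t+1)}$ with uniformly controlled distortion.
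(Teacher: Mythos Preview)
Your plan has the right overall shape---decompose itineraries into intermittent sojourns and hyperbolic excursions, control each via Lemma~\ref{lem:distortion}, and assemble with Lemma~\ref{lem:double}---but the ``renewal step'' you flag as the obstacle hides a genuine gap, and the tool needed to close it is absent from your proposal. The decay in $t$ must come from escape through $H$, and escape happens only during the hyperbolic excursions. Your hope that the hyperbolic part contributes ``only a bounded multiplicative constant'', justified by analogy with the closed-system renewal of \cite{young polynomial}, yields at best $\hLp^t 1 \le \Lp^t 1 = O(1)$ on any fixed $J_n$ away from $0$---no decay in $t$ whatsoever. What is actually required is a quantitative statement that \emph{surviving} orbits lose mass exponentially fast while in the hyperbolic region. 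The paper supplies this as Lemma~\ref{lem:exp escape}: inducing on a base $I_S=[a_{n_S},1)\supset H$, the first-return map $S=T^R$ is uniformly expanding with countable Markov partition and large images, and the open induced system satisfies $m(\hI_S^k)\lesssim\sigma^k$ for some $\sigma<1$ (imported from \cite{demers exp}). This exponential escape is the engine of the proof; the bounded constant you are hoping for is precisely $\sum_k \sigma^{(k-1)(n_S+1)}k^{(\gamma+1)/\gamma}<\infty$, and it does not emerge from closed-system renewal estimates.

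The paper's decomposition is also organized differently from yours: not by the landing cell $J_n$ at time $t$, but by starting position ($I_S$ versus $[0,a_{n_S})$, the latter reduced to the former by bounded distortion and Lemma~\ref{lem:double}) and, for points starting in $I_S$, by the number $k$ of passes through the neutral region $I\setminus I_S$. Orbits with $k>b\log t$ have spent enough returns in the base that Lemma~\ref{lem:exp escape} alone gives $\sigma^{(k-1)(n_S+1)}\lesssim t^{-1/\gamma}$; for $k\le b\log t$ one conditions on the cylinder of each pass and obtains $m(E_k^{t,-})\lesssim\sigma^{(k-1)(n_S+1)}k^{(\gamma+1)/\gamma}t^{-1/\gamma}$, summable in $k$. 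As a secondary issue, your tail claim $\hLp^t 1\lesssim 1$ on $J_n$ for $n\gtrsim t$ is not justified: even for the closed system one only has $\Lp^t 1\lesssim n$ on $J_n$, and improving this for the open system again requires controlling survival during hyperbolic excursions.
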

The proof of this lemma is based on the fact that an induced map
related to $T$ has exponential escape rate. To formulate this property, choose $n_S> h$, let $I_S=[a_{n_S},1)\supset H$ and consider the induced map $S = T^R :I_S\to I_S$, where $R$ is the first return time to $I_S$. 

Let $\hI_S^t=\bigcap\limits_{i=0}^tS^{-i}(I_S\setminus H)$ denote the set of points in $I_S$ which do not enter $H$ before
time $t$ under the action of $S$. The induced open system $S|_{\hI_S^0}$ is uniformly expanding with countably many branches and admits a countable Markov partition which is formed 
by joining $\cJ^{(\ell_H)}$ with the partition into sets on which $R$ is constant.  
The action of $S$ on this partition satisfies the large images condition \cite{demers exp}; hence the following property holds.\footnote{The full results of \cite{demers exp} also 
require a ``smallness" condition on the size of the hole.  This condition is not needed here
since we are not invoking any results regarding a spectral gap for the transfer operator
associated with $S$, but just an exponential rate of escape, which does not require the
hole to be small.}
\begin{Lem}
\label{lem:exp escape}
{\rm \cite{demers exp}} There exists $\sigma < 1$ such that $m(\hI_S^t) \lesssim \sigma^t$.
\end{Lem}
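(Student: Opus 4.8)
Since Lemma~\ref{lem:exp escape} is the open‑system escape estimate of \cite{demers exp} applied to the induced map $S=T^R$, the plan is to verify that $S|_{\hI_S^0}$ belongs to the class treated there, and then to recall why exponential escape follows (and why, unlike the spectral results of \cite{demers exp}, it needs no smallness assumption on $H$). The hypotheses I would check are: (i) $S$ is uniformly expanding — on each element of its Markov partition (the common refinement of $\cJ^{(\ell_H)}|_{I_S}$ with the level sets $\{R=\text{const}\}$), $S$ is a $C^{1+}$ diffeomorphism onto a union of partition elements, with $\inf|DS|\ge\lambda_0:=1+2^\gamma(\gamma+1)a_{n_S}^\gamma>1$, because every orbit segment realising a return either starts in $J_R$ (where $|DT|=2$) or lies entirely in $[a_{n_S},\tfrac12)$ (where $|DT|\ge\lambda_0$); (ii) $S$ has bounded distortion — on a return segment with $R\ge2$ this is Lemma~\ref{lem:distortion}(b) applied to $T^R$ for any fixed $p\le\tfrac{\gamma}{\gamma+1}$, while for $R=1$ it is the trivial one‑step bound; (iii) the hole $H$ is a union of elements of this Markov partition, since $H\in\cJ^{(\ell_H)}$; and (iv) the ``large images'' condition holds — each branch of $S$ maps onto one of the partition elements $J_0,\dots,J_{n_S}$, all of definite Lebesgue measure.

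Given (i)--(iv), Lemma~\ref{lem:exp escape} is exactly the escape‑rate conclusion of \cite{demers exp}. The underlying mechanism is a Doeblin‑type estimate: the partition is in fact finitely covering — one has $S(J_R)=I_S$ and $S^{k+1}(J_k)=I_S$ for $1\le k\le n_S$, so $S^{n_S+1}$ carries every partition element onto all of $I_S\supseteq H$ — and bounded distortion then produces a constant $c\in(0,1)$ and an integer $N$ of order $n_S$ such that the surviving mass loses at least a proportion $c$ of itself every $N$ steps, uniformly in time: $m(\hI_S^{(j+1)N})\le(1-c)\,m(\hI_S^{jN})$ for all $j$. Iterating gives $m(\hI_S^t)\le(1-c)^{\lfloor t/N\rfloor}\lesssim\sigma^t$ with $\sigma=(1-c)^{1/N}<1$. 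Here $c$ depends only on $m(H)/m(I_S)$ and the distortion constant, not on $H$ being small; a positive‑measure Markov hole suffices. Smallness of $H$ is needed in \cite{demers exp} only to obtain a spectral gap for the open transfer operator $\hLp_S$ (hence a conditionally invariant density), which plays no role in the present lemma.

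The one delicate point — which is precisely what the argument of \cite{demers exp} handles and which I would not reprove — is that the covering property must survive the removal of the parts of cylinders that have already fallen into $H$: a priori the $S^{n_S+1}$‑image of a heavily pruned cylinder need not contain $H$, so the heuristic above is not by itself a proof. The fix is to track not individual surviving cylinders but the $m$‑mass carried over the Markov tree by $\hLp_S$ (equivalently, to work inside a cone of densities forward‑invariant under $\hLp_S$) and to show that a definite fraction of the surviving density always sits over cylinders whose forward image still sees $H$; this ``robust covering'' is what upgrades the Doeblin heuristic to the uniform bound. Accordingly, I would present the lemma as an application of \cite{demers exp}, with the verification (i)--(iv) made explicit and the covering identities $S(J_R)=I_S$, $S^{k+1}(J_k)=I_S$ recorded.
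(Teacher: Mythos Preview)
Your proposal is correct and matches the paper's approach: the paper does not prove Lemma~\ref{lem:exp escape} at all but simply cites \cite{demers exp}, having noted in the preceding sentence that $S|_{\hI_S^0}$ is uniformly expanding with a countable Markov partition satisfying the large images condition, and having observed in a footnote that the smallness assumption on $H$ in \cite{demers exp} is needed only for the spectral gap, not for the exponential escape bound. Your write-up is a more explicit version of exactly this verification-plus-citation, with the added bonus of sketching the Doeblin mechanism behind the cited result; the only cosmetic point is that your covering identity ``$S^{k+1}(J_k)=I_S$'' should be read branch-wise (after $k$ steps one lands in $J_0=J_R$, which then splits into countably many $S$-branches whose union of images is $I_S$), but this does not affect the argument.
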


\begin{proof}[Proof of Lemma~\ref{lem:upper bound}]
We first assume there exists $d>0$ such that $[\frac 12, \frac 12 + d) \cap H = \emptyset$.
The complementary case is much simpler and will be addressed at the end of the proof.

Without loss of generality, we can choose the index $n_S$ that defines $I_S$ sufficiently large so that  $\frac{a_{n_S}}{2} < d$, {\sl viz.}\ the open system makes full returns to the interval $J_S=[\frac 12 , \frac 12 + \frac{a_{n_S}}{2} )$ before entering $I\setminus I_S=[0, a_{n_S})$.  
In order to obtain the estimate on $m(\hI^t)$, we consider separately the sets $\hI^t\cap I_S$ and $\hI^t\cap (I\setminus I_S)$. 

\noindent
{\em Case I. Estimate for points in $\hI^t\cap I_S$.}
Consider the decomposition of $\hI^t\cap I_S$ into subsets $E^t_k$ of points having made $k$ passes through $I\setminus I_S$ before time $t$. After each pass through $I\setminus I_S$, an orbit must spend at least $n_S+1$
iterates within $I_S$ before making its next pass. It results that the index $k$ here is at most $\lfloor \frac{t-2}{n_S+2}\rfloor$. 

In order to estimate the measure of the sets $E_k^t$, we consider separately the cases $k >b \log t$ and $k\leq b \log t$, where $b=\frac{1}{(n_S+1) \gamma \log \sigma^{-1}}$ and $\sigma$ is from Lemma~\ref{lem:exp escape}.

For $k >b \log t$, we observe that every point in $E^t_k$ must spend at least 
$(k-1)(n_S+1)$ iterates in $I_S$ before hitting the hole. Hence we have 
$E^t_k \subset \hI_S^{(k-1)(n_S+1)}$ and using Lemma~\ref{lem:exp escape} 
and the definition of $b$, we get
\[
\sum_{k > b\log t} m(E^t_k) \lesssim \sum_{k > b\log t} \sigma^{(k-1)(n_S+1)}
\lesssim t^{-\frac 1\gamma} ,
\]
as desired.

For $k\leq b \log t$, we first note that the case $k=0$ is easily estimated using 
$E^t_0 \subset \hI_S^t$ and Lemma~\ref{lem:exp escape}. 
From now on, we assume $k\in\{1,\cdots, \lfloor b \log t \rfloor\}$ and observe as before that the subset $E_k^{t,+}\subset E_k^t$ of points whose orbit spends at least $b(n_S+1)\log t$ iterates in $I_S$ up to time $t$ is included in $\hI_S^{b(n_S+1)\log t}$. This inclusion implies $m\left(\bigcup\limits_{k=1}^{\lfloor b\log t \rfloor}E_k^{t,+}\right)\lesssim t^{-\frac 1\gamma}$. 
   
It remains to consider the complementary subset $E_k^{t,-}=E_k^{t}\setminus E_k^{t,+}$ of points whose orbit up to $t$ spends more than $t-b(n_S+1)\log t$ iterates in $I\setminus I_S$. Given $x \in E^{t,-}_k$ and $i\in\{1, \ldots, k\}$, let $n_i > n_S$ be such that $J_{n_i}$ is the element of $\cJ$ where $T^{j}(x)$ begins its $i$th pass through $I\setminus I_S$.  We must have 
\[
n_i - n_S>\frac{t-b(n_S+1)\log t}{k},
\]
for at least one $i\in\{1, \ldots, k\}$, otherwise we would have $\sum\limits_{i=1}^kn_i-n_S\leq t-b(n_S+1)\log t$, which contradicts the definition of $E_k^{t,-}$. Accordingly, we have
\begin{equation}
\label{eq:break down k}
m(E^{t,-}_k)
\leq \sum_{i = 1}^k m\left(x\in E^t_k : n_i - n_S>\frac{t-b(n_S+1)\log t}{k}\right) .
\end{equation}
The sets in this sum can be decomposed using symbolic dynamics. Given two integers $t_1\leq t_2$ and a symbolic word $\theta_{t_1}^{t_2}\in\{L,R\}^{t_2-t_1+1}$, let $J_{\theta_{t_1}^{t_2}}=\bigcap\limits_{\ell=t_1}^{t_2}T^{-\ell}(J_{\theta_\ell})$. We have 
\begin{equation}
\label{eq:ni}
\left\{x\in E^t_k : n_i=n\right\}=\bigcup_{j,\{\theta_\ell\}_{\ell=1}^{j-2},\{\theta_\ell\}_{\ell=j+1}^t} J_{\theta_0^{j-1}}\cap T^{-j}(J_n)\cap J_{\theta_{j+1}^t},
\end{equation}
where, by an abuse of notation\footnote{For this expression to be meaningful, we should decompose the sets $I_S$ and $T^{-(j-1)}(J_S)$ into (standard) cylinder sets prior to define the intermediate cylinder $J_{\theta_1^{j-2}}$. This abuse of notation has no impact on the reasoning here. (The same comment applies to the set $T^{-j}(J_n)$ and to the decomposition in equation \eqref{eq:ni}.) }
\[
J_{\theta_0^{j-1}}:=I_S\cap J_{\theta_1^{j-2}}\cap T^{-(j-1)}(J_S),
\]
and where the union on $\{\theta_\ell\}_{\ell=0}^{j-2}$ (resp.\ $\{\theta_\ell\}_{\ell=j+1}^t$) is taken over all admissible words compatible with $i-1$ (resp.\ $k-i$) passes through $I\setminus I_S$ and avoiding $H$ until at least time $t$. 
The sets in \eqref{eq:ni} are pairwise disjoint; hence it suffices to estimate each quantity $m\left(J_{\theta_0^{j-1}}\cap T^{-j}(J_n)\cap J_{\theta_{j+1}^t}\right)$.

To proceed, notice first that the property $T^{n}(J_n)=J_0$ implies
\[
 J_{\theta_0^{j-1}}\cap T^{-j}(J_n)= J_{\theta_0^{j-1}}\cap T^{-j}(I\setminus I_S)\cap T^{-(j+n)}(J_0).
\]
Moreover, the map $T^{j+n}$ is one-to-one on each element of $T^{-(j+n)}(J_0)$. Assuming $j+n+1\leq t$, and applying the bounded distortion estimate of the proof of Lemma \ref{lem:distortion}, we obtain
\[
\frac{m\left( J_{\theta_0^{j-1}}\cap T^{-j}(J_n)\cap J_{\theta_{j+1}^t}\right)}{m\left(J_{\theta_0^{j-1}}\cap T^{-j}(I\setminus I_S)\cap T^{-(j+n)}(J_0)\right)}\approx
\frac{m\left(T^{j+n}\left(J_{\theta_0^{j-1}}\cap T^{-j}(J_n)\cap J_{\theta_{j+1}^t}\right)\right)}{m\left(T^{j+n}\left(J_{\theta_0^{j-1}}\cap T^{-j}(I\setminus I_S)\cap T^{-(j+n)}(J_0)\right)\right)}.
\]
The second ratio here is equal to 
\[
\frac{m\left(J_0\cap T^{j+n}(J_{\theta_{j+n+1}^t})\right)}{m(J_0)} ,
\]
from where our first estimate follows
\[
m\left(J_{\theta_0^{j-1}}\cap T^{-j}(J_n)\cap J_{\theta_{j+1}^t}\right)\approx m\left(J_0\cap T^{j+n}(J_{\theta_{j+n+1}^t})\right)m\left(J_{\theta_0^{j-1}}\cap T^{-j}(J_n)\right).
\]
Proceeding similarly for the second factor above and using $T(J_S)=I\setminus I_S$, we get 
\[
\frac{m\left(J_{\theta_0^{j-1}}\cap T^{-j}(J_n)\right)}{m\left(J_{\theta_0^{j-1}}\right)}\approx
\frac{m\left(T^{j}\left(J_{\theta_0^{j-1}}\cap T^{-j}(J_n)\right)\right)}{m\left(T^{j}\left(J_{\theta_0^{j-1}}\right)\right)}=\frac{m(J_n)}{a_{n_S}},
\]
from which equation \eqref{eq:spacing} implies 
\begin{equation}
\label{eq:condition}
m\left(J_{\theta_0^{j-1}}\cap T^{-j}(J_n)\cap J_{\theta_{j+1}^t}\right)\approx n^{-\frac{\gamma+1}{\gamma}}m\left(J_0\cap T^{j+n}(J_{\theta_{j+n+1}^t})\right)m\left(J_{\theta_0^{j-1}}\right).
\end{equation}
In the case where $j+n+1> t$ (which happens only when $i=k$), we use the inclusion
\[
J_{\theta_0^{j-1}}\cap T^{-j}(J_n)\cap \bigcup_{\{\theta_\ell\}_{\ell=j+1}^t} J_{\theta_{j+1}^t}\subset J_{\theta_0^{j-1}}\cap T^{-j}(J_n),
\]
to obtain using the relation before equation \eqref{eq:condition}
\[
m\left(J_{\theta_0^{j-1}}\cap T^{-j}(J_n)\cap J_{\theta_{j+1}^t}\right)\lesssim n^{-\frac{\gamma+1}{\gamma}}m\left(J_{\theta_0^{j-1}}\right).
\]
Now use that imposing $i-1$ passes through $I\setminus I_S$ before time $j$ implies at least $(i-1)(n_S+1)$ iterates in $I_S$ before $j$ to obtain the following relation
\[
m\left(\bigcup_{j,\{\theta_\ell\}_{\ell=1}^{j-2}}J_{\theta_0^{j-1}}\right)\subset m\left(\hI_S^{(i-1)(n_S+1)}\right)\lesssim \sigma^{(i-1)(n_S+1)}  .
\]
Similarly, for $j+n+1\leq t$ (otherwise the consideration here is not needed), let $q$ be the number of iterates that the orbits of points in $J_0\cap T^{j+n}(J_{\theta_{j+n+1}^t})$ spend in $I_S$.
Each pass in $I\setminus I_S$ from $i+1$ through $k-1$ must be followed by at least $n_S+1$ iterates in $I_S$; hence $q\geq (k-i-1)(n_S+1)$ (also $q\leq t-(j+n)+1-(k-i-1)$ where the maximum is obtained when each pass in $I\setminus I_S$ consists of a single iterate) and then
by Lemma~\ref{lem:exp escape},
\[
m\left(\bigcup_{j,\{\theta_\ell\}_{\ell=j+1}^t}J_0\cap T^{j+n}(J_{\theta_{j+n+1}^t})\right)\leq\sum_{q\geq (k-i-1)(n_S+1)}m(\hI_S^q)\lesssim\sigma^{(k-i)(n_S+1)}  .
\]
(Notice that this estimate holds even in the case $i=k$.)  Putting these estimates together with \eqref{eq:ni} and \eqref{eq:condition}, we have obtained 
\[
m\left(x\in E^t_k : n_i=n\right)\lesssim \sigma^{(k-1)(n_S+1)}n^{-\frac{\gamma+1}{\gamma}}.
\]
Using the inequality
\[
\sum_{n>n_S+\frac{t-b(n_S+1)\log t}{k}}n^{-\frac{\gamma+1}{\gamma}}\leq \left(n_S+\frac{t-b(n_S+1)\log t}{k}\right)^{-\frac 1\gamma}
\]
it follows from \eqref{eq:break down k} that 
\[
m(E^{t,-}_k)\lesssim \sigma^{(k-1)(n_S+1)}k^\frac{\gamma+1}{\gamma}\left(k n_S+t-b(n_S+1)\log t\right)^{-\frac 1\gamma}.
\]
It remains to sum over $k$. We finally have 
\[
m\left(\bigcup\limits_{k=1}^{\lfloor b\log t \rfloor}E^{t,-}_k\right)\lesssim 
\left(n_S+t-b(n_S+1)\log t\right)^{-\frac 1\gamma}\sum_{k=1}^{\lfloor b\log t \rfloor}
\sigma^{(k-1)(n_S+1)}k^\frac{\gamma+1}{\gamma}\lesssim t^{-\frac 1\gamma},
\]
as desired, where we used $\sum\limits_{k=1}^{b\log t}\sigma^{(k-1)(n_S+1)}k^\frac{\gamma+1}{\gamma}<+\infty$ and $(n_S+t-b(n_S+1)\log t)^{-1}\lesssim t^{-1}$.

\noindent
{\em Case II. Estimate for points in $\hI^t\cap [0, a_{n_S})$.}  
Recall that $[0, a_{n_S})=\bigcup\limits_{n>n_S}J_n$ and by definition of the $J_n$, we have $\hI^t\supset \bigcup\limits_{n>t+h}J_n$ so that  using equation \eqref{eq:spacing} yields 
$m\left(\hI^t\cap \bigcup\limits_{n>t+h}J_n\right)\lesssim (t+h)^{-\frac 1\gamma}\leq t^{-\frac 1\gamma}$.

It remains to estimate $m\left(\hI^t\cap \bigcup\limits_{n=n_S+1}^{t+h}J_n\right)$. For every $n>n_S$, we have 
$T^{n-n_S}(\hI^t\cap J_n)= \hI^{t-n+n_S}\cap J_{n_S}$. Using bounded distortion again, we get
\[
\frac{m\left(\hI^t\cap J_n\right)}{m(J_n)}\approx \frac{m\left(\hI^{t-n+n_S}\cap J_{n_S}\right)}{m(J_{n_S})},
\]
which, together with the inclusion $J_{n_S}\subset I_S$ and the conclusion in Case I, implies
\begin{equation}
\label{eq:inter sum}
m\left(\hI^t\cap \bigcup\limits_{n=n_S+1}^{t+h}J_n\right)\lesssim \sum_{n=n_S+1}^{t+h}n^{-\frac{\gamma+1}{\gamma}}(t-n+n_S)^{-\frac 1\gamma}.
\end{equation}
Lemma~\ref{lem:double} implies that the sum up to $t+h-1$ is bounded above by $\lesssim t^{-\frac1\gamma}$. For the last term, we have $(t+h)^{-\frac{\gamma+1}{\gamma}}(n_S-h)^{-\frac 1\gamma}\lesssim t^{-\frac 1\gamma}$. 

It results that 
$m\left(\hI^t\cap \bigcup\limits_{n=n_S+1}^{t+h}J_n\right)\lesssim t^{-\frac 1\gamma}$ and this concludes the proof of  Lemma~\ref{lem:upper bound} in the case
where $H$ is disjoint from $[\frac 12, \frac 12 + d)$ for some $d>0$.

The case in which $H$ contains $[\frac 12, \frac 12 +d)$ for some $d>0$ is much simpler since
points starting in $[2d,1)$ never enter $[0,2d)$ before escaping.  Thus the estimates of
Case I  with $k=0$ together with Case II imply that the upper bound on $m(\hI^t)$ in this case
is the same.
\end{proof}

With Lemma~\ref{lem:upper bound} established, we are ready to prove 
Theorem~\ref{thm:escape}.

\begin{proof}[Proof of Theorem~\ref{thm:escape}]  Given a density $f$ as in the theorem, let $C_0=\inf\limits_{x\in (0,x_0)}x^\alpha f(x)>0$ and $n_0=\min\{n:a_n\leq x_0\}$. We have $\hI^t\supset \bigcup\limits_{n>t+h}J_n$ for all $t\geq 0$; hence for $t$ sufficiently large so that $t+h\geq n_0$, the equation \eqref{eq:spacing} implies 
\begin{equation}
\label{eq:lower}
\mu_f(\hI^t) = \int_{\hI^t} f \, dm \geq C_0\int_0^{a_{t+h}} x^{-\alpha} \, dx 
\gtrsim (h+t)^{-\frac{1-\alpha}{\gamma}},
\end{equation}
from where the lower bound immediately follows.

For the upper bound, we split $[0,1)$ into 3 intervals: $[0,a_{h+t}) \cup [a_{h+t},a_{n_S})
\cup I_S$ and estimate the intersection of $\hI^t$ with each of these separately.\footnote{For $\alpha=0$, the upper bound also directly follows from the fact that $f$ is uniformly bounded on $I$ together with
Lemma~\ref{lem:upper bound}, namely
\[
\mu_f(\hI^t) \lesssim m(\hI^t) \lesssim t^{-\frac 1\gamma}.
\]
}

On $[0,a_{h+t})$, we estimate,
\[
\int_{\hI^t \cap [0, a_{h+t})} f \, dm \lesssim \int_0^{a_{h+t}} x^{-\alpha} \, dx 
\lesssim (h+t)^{-\frac{1-\alpha}{\gamma}} ,
\]
while on $I_S$, we have using that $f$ is bounded on this set, and Lemma~\ref{lem:upper bound},
\[
\int_{\hI^t \cap I_S} f \, dm \lesssim m(\hI^t) \lesssim  t^{-\frac 1\gamma} .
\]
On $[a_{h+t}, a_{n_S})=\bigcup\limits_{n=n_S+1}^{t+h}J_n$, we proceed as in Case II of the previous proof,
\[
\begin{split}
\int_{\hI^t \cap [a_{h+t}, a_{n_S})} f \, dm = \sum_{n=n_S+1}^{t+h} \int_{\hI^t\cap J_n} f \, dm
& \lesssim \sum_{n=n_S+1}^{t+h} a_n^{-\alpha} \, m\left(\hI^t\cap J_n\right) \\
& \lesssim \sum_{n=n_S+1}^{t+h} n^{-\frac{\gamma+1-\alpha}{\gamma}} (t-n+n_S)^{- \frac1\gamma},
\end{split}
\]
where we have used \eqref{eq:inter sum}.
As before, the last sum (except its last term) is estimated using Lemma~\ref{lem:double}, to give
\[
\sum_{n=n_S+1}^{t+h} n^{-\frac{\gamma+1-\alpha}{\gamma}} (t-n+n_S)^{- \frac1\gamma}\lesssim t^{-\frac{1-\alpha}{\gamma}}.
\]
\end{proof}

%%%%%%%%%%%%%%%%%%%%%%%%%%%%%%%%%%%%%%%%%%%%%%%%%%%%%%%%%%%%%%%%%%%%%%%%%%%%%%%%%%%%%%%%%%

\subsection{Properties of the function spaces $\F_p$}
\label{space}
The definition of the quantity $H^p_J$ before Theorem \ref{thm:convergence} implies the following simple facts about the set $\F_p$ ($p\in\R^+$), whose proof we leave to the reader.
\begin{Lem}
\begin{enumerate}
  \item[(1)] $\| \cdot \|_p$ is scale invariant, i.e. $\| C f \|_p = \| f \|_p$ for any $C>0$.
  \item[(2)] For any $J \in \cJ^{(\ell_H)}$, $E$ a subinterval of $J$ and $f \in \F_p$, we have
  \[
  \sup_{x \in E} f(x) \leq e^{H^p_J(f)|E|^p} \inf_{x \in E} f(x) \leq  e^{H^p_J(f)|E|^p} |E|^{-1} \int_E f \,dm .
  \]
  \item[(3)] If $q > p$, then $\F_q \subset \F_p$ (and $\F_q^0 \subset \F_p^0$).
  \end{enumerate}
\label{BASICCLAIM}
\end{Lem}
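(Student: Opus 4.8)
The plan is to read off all three assertions directly from the definition of $H^p_J$, treating separately its three branches ($f\equiv 0$ on $J$; $f(x)=0<f(y)$ for some $x,y\in J$; $f>0$ on $J$). No dynamics is involved, so the work is purely bookkeeping with logarithms and suprema. I would first dispose of (1) and (2), then spend the bulk of the argument on (3).

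For (1), I would observe that multiplying $f$ by a constant $C>0$ leaves each branch of the definition invariant: the sets $\{f\equiv 0 \text{ on }J\}$ and $\{\exists x,y\in J:\ f(x)=0<f(y)\}$ are unchanged, and in the remaining case $\log(Cf(x))-\log(Cf(y))=\log f(x)-\log f(y)$, so the supremum defining $H^p_J(Cf)$ equals $H^p_J(f)$. Taking the supremum over $J\in\cJ^{(\ell_H)}$ gives $\|Cf\|_p=\|f\|_p$. For (2), fix $f\in\F_p$ and $J,E$ as stated, with $|E|>0$. Since $\|f\|_p<\infty$ forces $H^p_J(f)<\infty$, the middle branch is excluded, so either $f\equiv 0$ on $J$ (all three quantities vanish, nothing to prove) or $f>0$ on $J$. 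In the latter case, for $x,y\in E$ the definition yields $\log f(x)-\log f(y)\le H^p_J(f)\,|x-y|^p\le H^p_J(f)\,|E|^p$, i.e.\ $f(x)\le e^{H^p_J(f)|E|^p}f(y)$; sup over $x$ and inf over $y$ give the first inequality, and $\inf_E f\le |E|^{-1}\int_E f\,dm$ gives the second.

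For (3), let $q>p$ and $f\in\F_q$; the requirements $f\in C^0(\cJ^{(\ell_H)})$, $f\ge 0$, $|f|_1=1$ are common to $\F_p$ and $\F_q$, so it suffices to show $\|f\|_p<\infty$. I would fix $J$ and note that the first two branches of $H^p_J$ do not depend on the exponent, hence agree with those of $H^q_J$; in particular, since $\|f\|_q<\infty$, no element of $\cJ^{(\ell_H)}$ lies in the $+\infty$ branch. When $f>0$ on $J$, for distinct $x,y\in J$ I would write
\[
\frac{\log f(x)-\log f(y)}{|x-y|^p}=\frac{\log f(x)-\log f(y)}{|x-y|^q}\,|x-y|^{q-p},
\]
and, since the left-hand side is antisymmetric, restrict to pairs with $\log f(x)\ge\log f(y)$; because $x,y\in I=[0,1)$ we have $|x-y|\le 1$ and hence $|x-y|^{q-p}\le 1$, so the right-hand side is at most $H^q_J(f)$. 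Thus $H^p_J(f)\le H^q_J(f)$ for every $J$, giving $\|f\|_p\le\|f\|_q<\infty$, so $f\in\F_p$. Intersecting with the condition \eqref{eq:order}, which is the same in both definitions, yields $\F_q^0\subset\F_p^0$.

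The arguments are elementary; the only place asking for mild care is (3), where one must keep the degenerate branches of $H^p_J$ separate from the genuine log-Hölder estimate and exploit that the elements of $\cJ^{(\ell_H)}$ have diameter at most $1$ in order to compare the two seminorms. I do not anticipate any genuine obstacle.
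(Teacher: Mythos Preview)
Your proof is correct, and since the paper explicitly leaves this lemma to the reader (``whose proof we leave to the reader''), there is no proof in the paper to compare against. Your handling of the three branches of $H^p_J$ is clean, and the key observation in (3) --- that $|x-y|\le 1$ on $I$ so $|x-y|^{q-p}\le 1$ --- is exactly what is needed.
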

Now, we equip the set of measures with the topology of weak convergence, consider the ball $\B_p = \{ \mu_f\ :\ f \in \F_p, \| f\|_p \leq 1 \}$ and notice that this ball is not closed. 
Indeed, given $\ell\in\N$, let the density $f_\ell$ be defined by  
\[
f_\ell(x) = \left\{\begin{array}{cl}
a_{\ell}^{-1}&\text{if}\ x \in \bigcup\limits_{n \geq \ell+1} J_n\\
0&\text{elsewhere}. 
\end{array}\right.
\]
Then we have, $f_\ell \geq 0$, $\int f_\ell \, dm = 1$ and $\| f_\ell \|_p = 0$ so that $\mu_{f_\ell}\in \B_p$. However, we clearly have 
\[
\lim_{\ell\to\infty}\mu_{f_\ell}=\delta_0\not\in \B_p.
\]
The Dirac measure at 0 turns out to be the only possible singular component to where sequences in $\B_p$ can accumulate. 
\begin{Lem}
\label{lem:simple facts}
The set $\{(1-s)\mu_f+s \delta_0\ :\ s\in [0,1], f\in \B_p\}$ is compact. 
\end{Lem}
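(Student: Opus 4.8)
The plan is to show that the set $\mathcal{C} := \{(1-s)\mu_f + s\delta_0 : s \in [0,1], f \in \B_p\}$ is sequentially compact in the weak topology on measures; since the ambient space of probability measures on $[0,1)$ is metrizable for weak convergence, this suffices. So I would take an arbitrary sequence $(1-s_k)\mu_{f_k} + s_k\delta_0$ with $s_k \in [0,1]$ and $\|f_k\|_p \le 1$, and extract a convergent subsequence whose limit lies in $\mathcal{C}$. By compactness of $[0,1]$, pass to a subsequence so that $s_k \to s_\infty$. By Prokhorov (the $\mu_{f_k}$ are all probability measures on a compact space, hence tight), pass to a further subsequence so that $\mu_{f_k}$ converges weakly to some probability measure $\nu$. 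Then the original sequence converges weakly to $(1-s_\infty)\nu + s_\infty \delta_0$, and the whole problem reduces to showing that $\nu$ has the form $(1-r)\mu_f + r\delta_0$ for some $r \in [0,1]$ and some $f \in \B_p$; absorbing $r$ into $s_\infty$ then finishes it.

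The heart of the argument is therefore the structural claim: any weak limit $\nu$ of densities in $\B_p$ decomposes as a convex combination of $\delta_0$ and an absolutely continuous measure whose density is again in $\B_p$. First I would use Lemma~\ref{BASICCLAIM}(2): on each fixed element $J \in \cJ^{(\ell_H)}$ bounded away from $0$, the bound $\sup_J f_k \le e^{|J|^p} |J|^{-1} \int_J f_k\,dm \le e^{|J|^p}|J|^{-1}$ shows the $f_k$ are uniformly bounded on $J$. Since only finitely many elements of $\cJ^{(\ell_H)}$ meet any interval $[\eta, 1)$ with $\eta > 0$, and $0$ is the only accumulation point of endpoints, the $f_k$ are uniformly bounded on each $[\eta,1)$. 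Moreover $\log f_k$ is uniformly $p$-H\"older (with a uniform constant from $\|f_k\|_p \le 1$) on each such $J$, so the family $\{f_k\}$ is equicontinuous on each element; by Arzel\`a--Ascoli and a diagonal extraction over the countably many elements of $\cJ^{(\ell_H)}$, a further subsequence of $f_k$ converges uniformly on compact subsets of $(0,1)$ to a limit function $f$ which satisfies $f \in C^0(\cJ^{(\ell_H)})$, $f \ge 0$, and $\|f\|_p \le 1$ (the H\"older bound passes to the pointwise limit). By Fatou, $\int_0^1 f\,dm \le 1$; set $r := 1 - \int_0^1 f\,dm \in [0,1]$. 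On any $[\eta,1)$ the uniform bound on $f_k$ plus dominated convergence gives $\mu_{f_k}|_{[\eta,1)} \to \mu_f|_{[\eta,1)}$ strongly, hence $\nu|_{(0,1)} = \mu_f|_{(0,1)}$; the remaining mass $r$ of $\nu$ must sit at $\{0\}$, giving $\nu = \mu_f + r\delta_0$ with $f/|f|_1 \in \B_p$ if $|f|_1 > 0$ (scale invariance, Lemma~\ref{BASICCLAIM}(1)), and $\nu = \delta_0$ if $|f|_1 = 0$. Either way $\nu \in \mathcal{C}$, and then $(1-s_\infty)\nu + s_\infty\delta_0 \in \mathcal{C}$, completing the proof.

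The step I expect to be the main obstacle is verifying that \emph{no mass escapes to the interior} in a spurious way --- that is, ruling out that $\mu_{f_k}$ loses mass into a shrinking neighborhood of $0$ in a manner not captured by the $\delta_0$ term, or conversely that the limiting density $f$ absorbs extra mass. This is handled precisely by the two-sided control in Lemma~\ref{BASICCLAIM}(2): the upper bound $\sup_J f \le e^{|J|^p}|J|^{-1}\int_J f\,dm$ prevents concentration of density on small sub-intervals away from $0$, so the only place mass can accumulate in the singular part of a limit is at $0$ itself. One should be slightly careful that the argument uses that each element $J$ of $\cJ^{(\ell_H)}$ other than those adjacent to $0$ has positive distance from $0$ and that $\cJ^{(\ell_H)}$ restricted to $[\eta,1)$ is finite for every $\eta>0$ --- both of which follow from the stated property that $0$ is the unique accumulation point of endpoints of elements of $\cJ^{(\ell_H)}$. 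The bookkeeping of which partition element is ``adjacent to $0$'' and the diagonal extraction are routine, so I would not belabor them.
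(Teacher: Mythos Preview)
Your proposal is correct and follows essentially the same route as the paper: uniform bounds and equicontinuity on each $J \in \cJ^{(\ell_H)}$ via Lemma~\ref{BASICCLAIM}(2), Arzel\`a--Ascoli plus a diagonal extraction, Fatou to control the mass defect, and the identification of the defect with $\delta_0$ using that $0$ is the unique accumulation point of the partition. If anything, your write-up is slightly more complete than the paper's, since you explicitly treat general sequences $(1-s_k)\mu_{f_k} + s_k\delta_0$ in $\mathcal{C}$ (the paper only runs the argument for sequences in $\B_p$, leaving the easy absorption of $s_k$ implicit). One small wording issue: $I = [0,1)$ is not compact, so ``probability measures on a compact space'' is not quite right; either work on $[0,1]$ and observe the uniform bound on $f_k|_{J_R}$ forbids mass at $\{1\}$, or note directly that this same bound gives tightness on $[0,1)$.
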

\begin{proof}
Let $\{ \mu_{f_\ell}\}_{\ell\in\N} \subset \B_p$ be an arbitrary sequence.  Since $\int f_\ell \, dm =1$, there exists a subsequence $\{\mu_{f_{\ell_k}}\}$ which converges weakly to a probability measure $\mu_\infty$ on $I$.  Now fix $J \in \cJ^{(\ell_H)}$.  By Lemma~\ref{BASICCLAIM}, the sequence of densities $\{ f_{\ell_k} \}$ is a bounded, equicontinuous family on $J$. By the Arzel\`a-Ascoli theorem, there exists
a subsequence that converges uniformly to a function $f^{(\infty)}_J$ on $J$.\footnote{To be precise,
note that $\| f_\ell \|_p \leq 1$ implies that $f_\ell$ is uniformly continuous on $J$ so that
$f$ can be extended to a continuous function $\bar f_\ell$ on the closure $\bar J$ and 
it still holds that $\| \bar f_\ell \|_p \leq 1$.}
  Note that 
$H^p_J(f^{(\infty)}_J) \leq 1$.

Diagonalizing, we obtain a subsequence $\{ f_{\ell_{k_j}} \}$ converging to 
$f^{(\infty)}_J$ on each $J \in \cJ^{(\ell_H)}$.  Letting $f^{(\infty)}= \sum_J f_J^{(\infty)}$, we have $f^{(\infty)} \geq 0$,
$\| f^{(\infty)} \|_p \leq 1$,  and by Fatou's lemma, $\int f^\infty \, dm \leq 1$.  

Let $s = 1 - \int f^{(\infty)} \, dm$.  If $s<1$, let $f_\infty= (1-s)^{-1} f^{(\infty)}$. By the above observations, we have $\mu_{f_\infty}\in \B_p$.
Since $\{ 0 \}$ is the only accumulation point of the sequence of sets
$\{ J \}_{J \in \cJ^{(\ell_H)}}$, we must have $\mu_\infty = (1-s)\mu_{f_\infty} + s\delta_0$, as required.
\end{proof}
For the next statement, we need to introduce the (nonlinear) normalized transfer operator and its iterates,
\begin{equation}
\hLp_1^t f := \frac{\hLp^t f}{|\hLp^t f|_1},\ \forall t\geq 1.
\label{NORMATRANS}
\end{equation}
Recall that $\F_p^0 = \left\{ f \in \F_p : \exists x_0 \in (0,1) \mbox{ and } \alpha \in [0,1) \mbox{ such that } 
\eqref{eq:order} \mbox{ holds} \right\}$.

\begin{Pro}
\label{pro:invariant}
Let $p\in (0,\frac{\gamma}{\gamma+1}]$. We have $\hLp_1(\F^0_p)\subset \F^0_p$.  In addition, there exist
two constants $C_1, C_2 \geq 0$, such that for every $f \in \F^0_p$,
\[
\| \hLp_1^t f \|_p \leq C_1 \|f\|_p + C_2\ \text{for all}\  t \geq 1.
\]
\end{Pro}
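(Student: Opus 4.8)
The plan is to reduce everything to a single application of the normalized transfer operator, i.e.\ to show the estimate for $\hLp_1 f$ and then iterate. First I would record the preservation of the structural assumptions: if $f \in \F_p^0$, then $\hLp f = \Lp(f 1_{\hI})$ has, on each element $J$ of $\cJ^{(\ell_H)}$, the explicit form $\hLp f(x) = \sum_{y} f(y)/DT(y) 1_{\hI}(y)$, where the sum is over the (finitely or countably many) preimages $y$ lying in distinct elements of the Markov partition. Since the partition $\cJ^{(\ell_H)}$ is Markov for $\hT$, each preimage branch maps an element of $\cJ^{(\ell_H)}$ diffeomorphically onto $J$ (with the hole removed where relevant), so $\hLp f$ is continuous on each $J$. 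Positivity near $0$ is preserved because the branch of $T^{-1}$ fixing $0$ sends a neighborhood of $0$ into itself, and on this branch $1/DT(y) \approx 1$ near $0$, so if $f(x) \gtrsim x^{-\alpha}$ near $0$ then $\hLp f(x) \gtrsim x^{-\alpha}$ near $0$ as well; boundedness of $x^\alpha \hLp f(x)$ follows similarly together with the fact that contributions from the other branches are controlled. Thus $\hLp_1 f \in \F_p^0$ once we know $\|\hLp_1 f\|_p < \infty$, which is subsumed in the quantitative bound.

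Next, the quantitative bound: fix $J \in \cJ^{(\ell_H)}$ and $x \neq y \in J$. Writing $\hLp^t f(x) = \sum_{k} \frac{f(z_k(x))}{DT^t(z_k(x))} 1_{\hI^t}(z_k(x))$ where $z_k$ ranges over the inverse branches of $T^t$ landing in $J$ and surviving to time $t$, each such branch has its domain contained in a single element $J'$ of $\cJ^{(t+1)}$ (so the distortion Lemma~\ref{lem:distortion}(b) applies) and contained in a single element of $\cJ^{(\ell_H)}$ (so $H^p(f)$ applies on it). I would estimate
\[
\frac{\hLp^t f(x)}{\hLp^t f(y)} \leq \sup_k \frac{f(z_k(x))}{f(z_k(y))} \cdot \frac{DT^t(z_k(y))}{DT^t(z_k(x))} \leq e^{\|f\|_p \sup_k |z_k(x) - z_k(y)|^p} \cdot e^{C |x-y|^p},
\]
using Lemma~\ref{lem:distortion}(b) for the second factor (valid since $p \leq \frac{\gamma}{\gamma+1}$). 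The first exponent needs $\sup_k |z_k(x)-z_k(y)|^p$, and since $DT^t \geq 1$ everywhere, $|z_k(x) - z_k(y)| \leq |x-y|$, so $\sup_k |z_k(x)-z_k(y)|^p \leq |x-y|^p$. Taking logs and dividing by $|x-y|^p$ gives $H^p_J(\hLp^t f) \leq \|f\|_p + C$, and since $\|\cdot\|_p$ is scale invariant (Lemma~\ref{BASICCLAIM}(1)), $\|\hLp_1^t f\|_p = \|\hLp^t f\|_p \leq \|f\|_p + C$, which is even stronger than the claimed bound with $C_1 = 1$, $C_2 = C$. One subtlety: the branches landing in $J$ may come from elements of $\cJ^{(\ell_H)}$ that themselves straddle several elements of $\cJ^{(t+1)}$ — but each fixed inverse branch of $T^t$ does have connected image in a single $\cJ^{(t+1)}$-element by definition of that join, so this is not an issue; what must be checked carefully is the case $f(x)=0<f(y)$ cannot occur for surviving branches, which is exactly why we restrict to $\F_p^0$ where $f$ is strictly positive near $0$ — the branch fixing $0$ keeps it positive, and branches supported away from the hole inherit positivity from the left-branch images.

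The main obstacle I anticipate is bookkeeping the inverse branches correctly: showing that for each $x \in J$ the surviving preimages $z_k(x)$ are in bijective correspondence (via the same symbolic itinerary) with those of $y$, so that the termwise comparison $f(z_k(x))/f(z_k(y))$ and $DT^t(z_k(y))/DT^t(z_k(x))$ makes sense — this is where the Markov property of $\cJ^{(\ell_H)}$ for $\hT$, and the fact that refining by $T^{-i}(\mathcal{P})$ keeps $0$ as the only accumulation point of endpoints, is used. A secondary point is handling $H^p_J$ when $f \equiv 0$ on some element $J'$ mapped onto $J$: such branches simply contribute $0$ and can be dropped from the sup, so they do not spoil the estimate, and the inequality $\sup_k a_k/\sup_k b_k \le \sup_k(a_k/b_k)$ for the ratio of sums must be replaced by the correct elementary bound $\frac{\sum a_k}{\sum b_k} \le \max_k \frac{a_k}{b_k}$ when all $b_k \geq 0$, which is what is actually used above.
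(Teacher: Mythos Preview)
Your proposal is correct and follows essentially the same route as the paper: pair the surviving preimages by branch, use $\frac{\sum a_k}{\sum b_k}\le\max_k\frac{a_k}{b_k}$, bound the $f$-ratio via $\|f\|_p$ and the preimage contraction, and bound the Jacobian ratio via Lemma~\ref{lem:distortion}(b), then invoke scale invariance (Lemma~\ref{BASICCLAIM}(1)). The only notable difference is that for the preimage contraction $|z_k(x)-z_k(y)|\le |x-y|$ you use the elementary observation $DT\ge 1$, whereas the paper invokes Lemma~\ref{lem:distortion}(a) to obtain the sharper factor $\big(\tfrac{n}{n+t}\big)^{p(\gamma+1)/\gamma}$; since the proposition only requires a uniform bound, your simplification suffices and yields $C_1=1$.
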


\begin{proof}
Every $J\subset \cJ^{(\ell_H)}$ has at most two pre-images under $\hT$ and each pre-image is included in some element of $\cJ^{(\ell_H)}$. This implies that $\hLp(C^0(\cJ^{(\ell_H)}))\subset C^0(\cJ^{(\ell_H)})$. Also we obviously have 
\[
\hLp f \geq 0\quad\text{and}\quad |\hLp f|_1\leq |f|_1,
\]
for every $f \geq 0$. 

Now, fix $f \in \F^0_p$ and assume $C_0 \ge 1$ and $\alpha \in [0,1)$ are 
such that $C_0^{-1} \le x^\alpha f(x) \le C_0$ for $x \in (0, a_{n_0})$, and $f \le C_0$ on 
$I \setminus [0,a_{n_0})$.  
Without loss of generality, we may assume $n_0 > h + 1$.
Now letting $\hT_L$ and $\hT_R$ denote the left and right branches of $\hT$
respectively, we have
\begin{equation}
\label{eq:Lp bound}
\hLp f(x) = \frac{f(\hT_L^{-1}(x))}{DT(\hT_L^{-1}(x))} + \frac{f(\hT_R^{-1}(x))}{DT(\hT_R^{-1}(x))} .
\end{equation}
If $y = \hT_L^{-1}(x) \in (0, \frac{1}{2})$, then it follows from the definition of $T$ that
$\frac x2 \le y \le x$.  Thus if $x \in (0, a_{n_0-1})$, then by assumption on $f$,
\[
\begin{split}
C_0^{-1}(\hT_L^{-1}(x))^{-\alpha} & \le f(\hT_L^{-1}(x)) \le C_0 (\hT_L^{-1}(x))^{-\alpha} \\
\implies \qquad C_0^{-1} x^{-\alpha} & \le f(\hT_L^{-1}(x)) \le 2C_0 x^{-\alpha} .
\end{split}
\]
Combining this estimate together with \eqref{eq:Lp bound} and using the fact that 
$1 \le DT_L \le 3$ and $DT_R = 2$, we have
\[
\tfrac 13 C_0^{-1} x^{-\alpha} \le \hLp f(x) \le 2C_0 x^{-\alpha} + \tfrac{C_0}{2} \le 3C_0 x^{-\alpha} ,
\]
for all $x \in (0,a_{n_0-1})$, which is the required polynomial bound on the behavior of
$\hLp f$ near 0.  For $x \in [a_{n_0-1}, 1)$, we use the fact that $f$ is bounded
by $C_0$
at both preimages of $x$ so that $\hLp f(x) \le 2C_0$.

Moreover, the lower bound on $\hLp f$ given above implies that
$| \hLp f  |_1 >0$ so that $\hLp_1 f$ is well defined.   
Anticipating the proof below that $\| \hLp f \|_p<+\infty$ for every $f$ with $\|f\|_p<+\infty$ , we obtain  $\hLp_1(\F_p^0)\subset \F_p^0$.

In order to check the estimate on $\| \hLp_1^t f \|_p$, it suffices to
prove the inequality for $\| \hLp^t f \|_p$ due to the scale invariance property
from Lemma~\ref{BASICCLAIM}(1).

Let $f \in \F_p^0$. Fix $n \geq 0$, $J \in \cJ^{(\ell_H)}$, $J \subset J_n$  and $x, y \in J$. Let also $t\geq 1$ and $\{x_i\}$ (resp. $\{y_i\}$) be an enumeration of the pre-images $\hT^{-t}(x)$ (resp.\ $\hT^{-t}(y)$) such that each pair $x_i,y_i$ lies in the same branch of $\hT^{-t}$. Then
\begin{equation}
\label{eq:log}
\log \hLp^t f(x) - \log \hLp^t f(y) = \log \frac{\sum_{i} f(x_i)/ DT^t(x_i)}
{\sum_{i} f(y_i)/ DT^t(y_i)}
\leq \max_{i} \log \frac{f(x_i)}{f(y_i)} + \log \frac{DT^t(y_i)}{DT^t(x_i)},
\end{equation}
where we have used the fact that $\frac{\sum_i b_i}{\sum_i c_i} \leq \max_i \frac{b_i}{c_i}$
for two series of positive terms. The first term on the right hand side of 
\eqref{eq:log} is estimated by
\[
\log \frac{f(x_i)}{f(y_i)} \leq \| f\|_p |x_i-y_i|^p ,
\]
while the second term on the right side of \eqref{eq:log} is estimated by
\[
 \log \frac{DT^t(x_i)}{DT^t(y_i)}  \lesssim |T^t(x_i)-T^t(y_i)|^p= |x-y|^p,
\]
according to Lemma~\ref{lem:distortion}(b).  Putting these two estimates together and using Lemma~\ref{lem:distortion}(a), we obtain that there exist $C_1,C_2>0$ such that
\begin{align*}
\frac{|\log \hLp^t f(x) - \log \hLp^t f(y)|}{|x-y|^p}& \leq \| f \|_p \max_i\frac{|x_i-y_i|^p}{|x-y|^p} + C_2\\
&\leq \|f\|_p C_1 \left(\frac{n}{n+t}\right)^{p\frac{\gamma+1}{\gamma}} + C_2\\
&\leq C_1 \|f\|_p+C_2,
\end{align*}
as desired. 
\end{proof}

The last statement of this section provides examples of unbounded densities that belong to the sets $\F_p^0$.
\begin{Lem}
Let $p\in (0,\frac{\gamma}{\gamma+1}]$. Every function $\frac{f}{|f|_1}$ where $f(x)=x^{-\alpha}$ for all $x\in (0,1)$, $\alpha\in [0,1)$, belongs to $\F_p^0$. The same is true for the density $f_{\mbox{\tiny SRB}}$ associated with the SRB measure.
\label{EXAMPLESSPACE}
\end{Lem}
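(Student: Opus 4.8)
By the scale invariance of $\|\cdot\|_p$ (Lemma~\ref{BASICCLAIM}(1)), normalising an $L^1$ density by a positive constant does not affect whether it lies in $\F_p$, so it suffices to treat the \emph{unnormalised} densities $f(x)=x^{-\alpha}$ (which is in $L^1(m)$ since $\alpha<1$) and $f_{\mbox{\tiny SRB}}$ (the density of the \emph{finite} measure $\musrb$). Both are nonnegative and continuous on $(0,1)$, hence lie in $C^0(\cJ^{(\ell_H)})$, and both satisfy \eqref{eq:order}: for $f(x)=x^{-\alpha}$ one takes the exponent in \eqref{eq:order} equal to $\alpha$, so that $x^{\alpha}f(x)\equiv1$; for $f_{\mbox{\tiny SRB}}$ one takes it equal to $\gamma\in[0,1)$ and uses $f_{\mbox{\tiny SRB}}(x)\approx x^{-\gamma}$ near $0$ together with the boundedness of $f_{\mbox{\tiny SRB}}$ on $[x_0,1)$. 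Thus the only substantive point is the bound $\|f\|_p=\sup_{J\in\cJ^{(\ell_H)}}H^p_J(f)<+\infty$.

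To organise the supremum I would first record the structure of $\cJ^{(\ell_H)}$. Since $T^i(J_n)=J_{n-i}$ for $0\le i\le n$ and each $J_{n-i}$ lies inside a single atom of $\mathcal{P}$, refining $J_n$ by $\bigvee_{i=0}^{\ell_H}T^{-i}(\mathcal{P})$ does nothing when $n\ge\ell_H$; hence $\cJ^{(\ell_H)}$ consists of the intervals $J_n$ with $n\ge\ell_H$, together with finitely many subintervals of $J_0\cup\dots\cup J_{\ell_H-1}$, all of the latter lying at positive distance from $0$. On each of the finitely many ``bounded'' elements, $\log f$ is Lipschitz — clear for $-\alpha\log x$, and for $f_{\mbox{\tiny SRB}}$ it is the standard regularity of the LSV density (Lipschitz in the interior of each element of the Markov partition) — hence $p$-H\"older with a finite constant, contributing a finite amount to $\|f\|_p$. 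It then remains to bound $H^p_{J_n}(f)$ uniformly in $n\ge\ell_H$.

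For $f(x)=x^{-\alpha}$ and $x<y$ in $J_n=[a_n,a_{n-1})$ one has $\log f(x)-\log f(y)=\alpha(\log y-\log x)\le\alpha(y-x)/a_n$, and since $y-x<|J_n|$ and $1-p>0$,
\[
H^p_{J_n}(f)\le\frac{\alpha}{a_n}\sup_{x<y\in J_n}(y-x)^{1-p}\le\frac{\alpha\,|J_n|^{1-p}}{a_n}\lesssim n^{\frac1\gamma}\,n^{-\frac{(\gamma+1)(1-p)}{\gamma}}=n^{\frac{(\gamma+1)p-\gamma}{\gamma}},
\]
using \eqref{eq:spacing}. The exponent is $\le0$ precisely when $p\le\frac{\gamma}{\gamma+1}$, which is exactly the hypothesis of the lemma; hence $\sup_{n}H^p_{J_n}(f)<+\infty$ and $f\in\F_p^0$. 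For $f_{\mbox{\tiny SRB}}$ the computation on $J_n$ is verbatim the same once one knows $|f_{\mbox{\tiny SRB}}'(x)/f_{\mbox{\tiny SRB}}(x)|\lesssim 1/x$ on $(0,1)$ (a known property of the LSV density, consistent with $f_{\mbox{\tiny SRB}}(x)\approx x^{-\gamma}$): for $x<y\in J_n$ this gives $|\log f_{\mbox{\tiny SRB}}(x)-\log f_{\mbox{\tiny SRB}}(y)|\lesssim a_n^{-1}|x-y|$, and one concludes as above, again needing only $p\le\frac{\gamma}{\gamma+1}$.

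The main obstacle is thus not the interval arithmetic but having the regularity of $f_{\mbox{\tiny SRB}}$ at hand (piecewise Lipschitzness and $|f_{\mbox{\tiny SRB}}'/f_{\mbox{\tiny SRB}}|\lesssim 1/x$). These are classical for the family $T$, but if one prefers a self-contained argument they can be bypassed: the estimates in the proof of Proposition~\ref{pro:invariant} apply equally to the closed transfer operator $\Lp$, so $\|\Lp^t \mathbf 1\|_p\le C_2$ for all $t$; since $\Lp^t\mathbf 1$ converges in $L^1$ to a constant multiple of $f_{\mbox{\tiny SRB}}$, which is absolutely continuous, the compactness statement of Lemma~\ref{lem:simple facts} forces this limit to lie in $\F_p$ (no mass escapes to $\delta_0$), and \eqref{eq:order} again follows from $f_{\mbox{\tiny SRB}}\approx x^{-\gamma}$. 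Either route yields $f_{\mbox{\tiny SRB}}/|f_{\mbox{\tiny SRB}}|_1\in\F_p^0$.
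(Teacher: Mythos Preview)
Your proof is correct and follows essentially the same approach as the paper. For $f(x)=x^{-\alpha}$ the paper performs the identical computation, handling all $J\in\cJ^{(\ell_H)}$ uniformly via the inclusion $J\subset J_n$ rather than separating out the finitely many elements bounded away from $0$ as you do; for $f_{\mbox{\tiny SRB}}$ the paper uses exactly your second route---the analogue of Proposition~\ref{pro:invariant} for the closed operator $\Lp$ giving $\sup_t\|\Lp^t 1\|_p<+\infty$, then Lemma~\ref{lem:simple facts} applied to the limit, together with $f_{\mbox{\tiny SRB}}(x)\approx x^{-\gamma}$---and does not invoke the pointwise regularity $|f_{\mbox{\tiny SRB}}'/f_{\mbox{\tiny SRB}}|\lesssim 1/x$ of your first route.
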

\begin{proof}
Letting $f(x) = x^{-\alpha}$ for all $x\in (0,1)$, we clearly have $f \in C^0(\cJ^{(\ell_H)})$, $f \geq 0$ and $|f|_1<+\infty$ (and \eqref{eq:order} holds).  Therefore, we only need to check that $\| f\|_p < +\infty$.  

Fix $J \in \cJ^{(\ell_H)}$, $J \subset J_n$ for some $n \geq 0$ and let $x<y \in J$.  Then
\[
 \frac{\log f(x) - \log f(y)}{|x-y|^p} = \alpha \frac{\log(y/x)}{|x-y|^p} \leq \frac{\alpha}{x} |x-y|^{1-p} .
\]
Since $x \in J_n$, we have $\frac{1}{x} \lesssim n^{-\frac 1\gamma}$ and $|x-y| \lesssim n^{-\frac{\gamma+1}{\gamma}}$
by equation \eqref{eq:spacing}.  Thus
\[
H^p_J(f) \lesssim n^{\frac{1-(\gamma+1)(1-p)}{\gamma}}
\]
and the exponent of $n$ is non-positive when $p \leq \gamma/(\gamma+1)$. Taking the supremum over $n$, we have
$\| f\|_p < \infty$ as required.

As for $f_{\mbox{\tiny SRB}}$, observe that the sequence $\{{\mathcal L}^t 1\}_{t\in\N}$ 
(transfer operator for the system without the hole) converges to the density 
$f_{\mbox{\tiny SRB}}$. 
On the other hand, the proof of Proposition \ref{pro:invariant} can be repeated {\sl mutatis mutandis} to conclude ${\mathcal L}(\F_p^0)\subset \F_p^0$ and $\sup_{t\in\N}\| {\mathcal L}^t f \|_p <+\infty$ for all $f\in \F_p^0$. Since $1\in \F_p^0$, it follows from Lemma \ref{lem:simple facts} that $f_{\mbox{\tiny SRB}}\in\F_p$. However, $f_{\mbox{\tiny SRB}}(x)\approx x^{-\gamma}$; hence we must have $f_{\mbox{\tiny SRB}}\in \F_p^0$ as desired. 
\end{proof}

%%%%%%%%%%%%%%%%%%%%%%%%%%%%%%%%%%%%%%%%%%%%%%%%%%%%%%%%%%%%%%%%%%%%%%%%%%%%%%%%%%%%%%%%%%

\subsection{Proof of Theorem~\ref{thm:convergence}}
\label{convergence}

The proof relies on the following strengthening of the volume estimate in Lemma~\ref{lem:upper bound}, on the set of points that enters $H$ precisely at time $t$.
\begin{Lem}
\label{lem:precise}
\[
m(\hI^{t-1} \setminus \hI^t) \lesssim t^{-\frac{\gamma+1}{\gamma}} \log t,\ \forall t\geq 2  .
\]
\end{Lem}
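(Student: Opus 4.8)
The plan is to decompose $\hI^{t-1}\setminus\hI^t$ — the set of points that survive through time $t-1$ but land in $H$ at time $t$ — according to where the orbit sits relative to the inducing interval $I_S=[a_{n_S},1)$, exactly mirroring the Case~I/Case~II split used in the proof of Lemma~\ref{lem:upper bound}, and to track one extra power of $t$ coming from the requirement of hitting $H$ \emph{precisely} at time $t$ rather than merely surviving. First I would observe that $H\subset J_S\subset I_S$, so a point entering $H$ at time $t$ has $T^t(x)\in H$; thus it is within $I_S$ at time $t$ and, since the open system makes full returns to $J_S$ before visiting $I\setminus I_S$, the orbit must actually complete its final pass through $I_S$ before hitting $H$. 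Writing $\hI_{S,*}^{j}$ for the set of points of $I_S$ that survive $j$ iterates of the induced map $S$ and then enter $H$ on the $(j{+}1)$st application of $S$, the uniform expansion of $S$ together with Lemma~\ref{lem:exp escape} gives $m(\hI_{S,*}^{j})\lesssim\sigma^{j}$ (the set of points entering $H$ at a prescribed $S$-time is contained in $\hI_S^{j}$, which decays geometrically).

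Next I would run the Case~I argument for points in $\hI^{t-1}\cap I_S$ landing in $H$ at time $t$: decompose by the number $k$ of passes through $I\setminus I_S$, split into $k>b\log t$ and $k\le b\log t$ exactly as before, and in the $k\le b\log t$ regime split again into $E_k^{t,+}$ (orbit spends $\ge b(n_S+1)\log t$ iterates in $I_S$) and $E_k^{t,-}$ (orbit spends $>t-b(n_S+1)\log t$ iterates in $I\setminus I_S$). The symbolic-dynamics expansion \eqref{eq:ni}--\eqref{eq:condition} carries over verbatim, but now the final symbolic block $\{\theta_\ell\}_{\ell=j+n+1}^{t}$ is constrained so that $T^t$ lands in $H$; since $H$ is a single element of $\cJ^{(\ell_H)}$, this replaces a factor $m(\hI_S^{q})\lesssim\sigma^{q}$ by $m(\hI_{S,*}^{q'})\lesssim\sigma^{q'}$ at the same geometric order, and additionally forces $n_i-n_S>\tfrac{t-b(n_S+1)\log t}{k}$ with the sum over admissible $n$ now picking up $m(J_n)\cdot m(H)\approx n^{-\frac{\gamma+1}{\gamma}}$ at the \emph{maximal} such $n\approx t/k$. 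Summing $n^{-\frac{\gamma+1}{\gamma}}$ over $n\ge t/k$ gives $(t/k)^{-\frac{\gamma+1}{\gamma}}$ rather than $(t/k)^{-\frac1\gamma}$ — this is where the improved exponent $-\frac{\gamma+1}{\gamma}$ appears — and after multiplying by the $k^{\frac{\gamma+1}{\gamma}}$ from the $k$ choices of $i$ and summing the convergent series $\sum_k\sigma^{(k-1)(n_S+1)}k^{\frac{\gamma+1}{\gamma}}$, Case~I contributes $\lesssim t^{-\frac{\gamma+1}{\gamma}}$ (no log needed here, or at worst a harmless one).

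For Case~II, points in $\hI^{t-1}\cap[0,a_{n_S})$ that enter $H$ at time $t$: for $x\in J_n$ with $n>n_S$ one has $T^{n-n_S}(\hI^{t-1}\cap J_n)=(\hI^{t-1-n+n_S}\cap J_{n_S})$ with the image hitting $H$ at the shifted time, and bounded distortion gives $m(\hI^{t-1}\cap\{x\in J_n:\text{enters }H\text{ at }t\})\approx |J_n|\cdot|J_{n_S}|^{-1}\, m(\hI^{t-1-n+n_S}\cap J_{n_S}\cap\{\text{enters }H\text{ at }t-n+n_S\})$, and the bracketed quantity is bounded by the Case~I estimate applied at time $t-n+n_S$, i.e.\ $\lesssim (t-n)^{-\frac{\gamma+1}{\gamma}}$ (modulo logs). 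Thus
\[
m\!\left(\hI^{t-1}\cap\bigcup_{n=n_S+1}^{t+h}J_n,\ \text{enter }H\text{ at }t\right)\lesssim \sum_{n=n_S+1}^{t+h} n^{-\frac{\gamma+1}{\gamma}}(t-n+n_S)^{-\frac{\gamma+1}{\gamma}}\log(t-n+n_S),
\]
and the tail $\bigcup_{n>t+h}J_n$ contributes nothing since those points have not yet even had the chance to reach $I_S$. Applying Lemma~\ref{lem:double} with $a=b=\frac{\gamma+1}{\gamma}>1$ (absorbing the logarithm by comparing with a slightly smaller exponent, or by noting $\log(t-n+n_S)\lesssim\log t$ uniformly and pulling it out) yields $\lesssim t^{-\frac{\gamma+1}{\gamma}}\log t$, and handling the last term of the sum directly gives the same bound. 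Combining the two cases gives $m(\hI^{t-1}\setminus\hI^t)\lesssim t^{-\frac{\gamma+1}{\gamma}}\log t$.

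The main obstacle I anticipate is bookkeeping in Case~I: one must verify that constraining the final symbolic word so that the orbit lands in the fixed cylinder $H$ at the exact time $t$ (rather than merely avoiding $H$ and surviving) genuinely converts one factor of $t^{-1/\gamma}$ into $t^{-(\gamma+1)/\gamma}$ without spoiling the geometric decay in $k$, and that the logarithmic factor — which enters through the width $b(n_S+1)\log t$ of the ``intermediate time'' window — propagates correctly through the Case~II convolution and is not squared. Carefully isolating \emph{which} single factor carries the extra power of $n$ (it is the final jump into $H$, not the intermediate returns) is the delicate point; everything else is a routine re-run of the Lemma~\ref{lem:upper bound} argument.
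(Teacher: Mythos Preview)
Your Case~I argument has a genuine gap at the step where you claim the improved exponent. You write that ``summing $n^{-\frac{\gamma+1}{\gamma}}$ over $n\ge t/k$ gives $(t/k)^{-\frac{\gamma+1}{\gamma}}$ rather than $(t/k)^{-\frac1\gamma}$'' --- but this is false: $\sum_{n\ge N} n^{-\frac{\gamma+1}{\gamma}}\approx N^{-1/\gamma}$, exactly as in Lemma~\ref{lem:upper bound}. The exact-time constraint does \emph{not} pin $n_i$ to a single value once you bound the remainder of the trajectory (the other $k-1$ passes and the $I_S$-segments) merely by survival probability $\sigma^{(k-1)(n_S+1)}$; that bound discards the constraint, so $n_i$ is still free to range over $[(t-b\log t)/k,\,t]$ and you recover only $(t/k)^{-1/\gamma}$. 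To exploit the constraint you must instead track the full $k$-fold convolution $\sum_{n_1+\cdots+n_k\approx t}\prod_j n_j^{-\frac{\gamma+1}{\gamma}}$, and each iteration of that convolution picks up a bounded-distortion constant $C_0$, producing a factor $C_0^k$ with $k$ potentially as large as $t/(n_S+2)$. Your convergent series $\sum_k\sigma^{(k-1)(n_S+1)}k^{\frac{\gamma+1}{\gamma}}$ is therefore not the object that actually appears.

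The paper's proof resolves exactly this issue, but via a different decomposition: it counts returns to $J_h$ (not passes through $I\setminus I_S$), first establishes an auxiliary lemma bounding the measure of \emph{first} (and first subsequent) entry to $J_h$ at time $t$ by $C_0\,t^{-\frac{\gamma+1}{\gamma}}$, and then splits returns into ``long'' ($\ge n_0$ iterates) and ``short''. The point of the threshold $n_0$ is that it is chosen large enough that the one-step convolution constant $\rho:=C_0\cdot 2^{\frac1\gamma+2}\gamma(n_0-1)^{-1/\gamma}$ is strictly less than $1$; this makes the $k$-fold convolution over long-return lengths geometrically summable in $k$, absorbing the $C_0^k$. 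The short returns are handled by the induced map (they live in a uniformly expanding region), and the $\log t$ ultimately comes from summing over the $O(\log t)$ possible total long-return lengths $\ell\in[t-C_4\log t,\,t]$. Your Case~II reduction is fine once Case~I is in hand, but Case~I as written does not go through without this convolution control.
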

The proof is given in Section~\ref{proof of precise} below.
\begin{Rem}
We believe one should be able to eliminate the factor $\log t$ and thus
obtain Lemma~\ref{lem:upper bound} via the identity
$m(\hI^t) = \sum_{i=t}^\infty m(\hI^i \setminus \hI^{i+1})$.  Although we are able to
prove this upper bound in a special case (see Lemma~\ref{lem:one pass}), our current techniques
do not provide this estimate in the general case, so we will use the weaker
version stated above. 
\end{Rem}

Let $f \in \F_p^0$ for some $p>0$. Using Lemma \ref{BASICCLAIM}(3), we may assume without loss of generality that $p \in (0, \frac{\gamma}{\gamma + 1}]$. Assume for now that the 
exponent $\alpha$ of $f$ from \eqref{eq:order} is positive. We are going to  
derive a bound analogous to Lemma~\ref{lem:precise} for $\mu_f$.

Let $n_0$ be the smallest $n$ such that $f(x) \approx x^{-\alpha}$ on $(0,a_{n})$. As in the proof of Theorem~\ref{thm:escape}, we split $[0,1)$ into 3 intervals, namely $[0,a_t)$, $[a_t, a_{n_0})$ and $[a_{n_0},1)$. 

We have $(\hI^{t-1} \setminus \hI^t) \cap [0,a_t) \subset J_{t+h}$.  Thus
\[
\int_{(\hI^{t-1} \setminus \hI^t) \cap [0,a_t)} f \, dm \lesssim \int_{J_{t+h}} x^{-\alpha} \, dx
\lesssim (h+t)^{-\frac{\gamma+1}\gamma+\frac\alpha\gamma} .
\]
Moreover, since $f$ is bounded on $[a_{n_0},1)$, we also have 
\[
\int_{(\hI^{t-1} \setminus \hI^t) \cap [a_{n_0},1)} f \, dm \lesssim m(\hI^{t-1} \setminus \hI^t) 
\lesssim t^{-\frac{\gamma+1}{\gamma}}\log t .
\]
It remains to estimate the $\mu_f$ measure of $\bigcup\limits_{n=n_0+1}^t J_n \cap (\hI^{t-1} \setminus \hI^t)$. 
Since without loss of generality, we may take $n_0 > h$, we have for each $n > n_0$, 
$T^{n-n_0}(J_n \cap (\hI^{t-1} \setminus \hI^t)) = J_{n_0} \cap (\hI^{t-1-n+n_0} \setminus \hI^{t-n+n_0})$.
Using bounded distortion, we obtain,
\[
\frac{m(J_n \cap (\hI^{t-1} \setminus \hI^t))}{m(J_n)} \approx \frac{m(J_{n_0} \cap (\hI^{t-1-n+n_0} \setminus
\hI^{t-n+n_0}))}{m(J_{n_0})} 
\]
 which implies
$m(J_n \cap (\hI^{t-1} \setminus \hI^t)) 
\lesssim n^{-\frac{\gamma+1}{\gamma}} (t-n+n_0)^{-\frac{\gamma+1}{\gamma}}\log(t-n+n_0)$.
Now
\[
\begin{split}
\int_{(\hI^{t-1} \setminus \hI^t) \cap [a_t, a_{n_0})} f \, dm 
& = \sum_{n= n_0+1}^t \int_{(\hI^{t-1} \setminus \hI^t) \cap J_n} f \, dm
\lesssim \sum_{n=n_0+1}^t a_n^{-\alpha} m(J_n \cap (\hI^{t-1} \setminus \hI^t)) \\
& \lesssim \sum_{n=n_0+1}^t 
n^{-\frac{\gamma+1}\gamma+\frac\alpha\gamma} (t-n+n_0)^{-\frac{\gamma+1}{\gamma}}
\log (t-n+n_0) .
\end{split}
\]
Since $\alpha >0$, we may dominate $\log (t-n+n_0)$ by $C (t-n+n_0)^{\alpha/\gamma}$
for some $C>0$.  Now
 using Lemma~\ref{lem:double}, we finally conclude the existence  
of a constant $\bar{C}$ such that 
\begin{equation}
\label{eq:precise alpha}
\mu_f (\hI^{t-1} \setminus \hI^t) \leq \bar{C} t^{-\frac{\gamma+1}\gamma+\frac\alpha\gamma} .
\end{equation}
Notice that using the relation $\mu_f(\hI^t) =\sum\limits_{i=t}^\infty \mu_f(\hI^i \setminus \hI^{i+1})$, 
this inequality implies the estimate $\mu_f(\hI^t)\lesssim t^{-\frac{1-\alpha}{\gamma}}$ for 
$\alpha >0$ without using Lemma~\ref{lem:upper bound}. However, for $\alpha=0$, 
a similar reasoning yields 
$\mu_f(\hI^{t-1} \setminus \hI^t) \lesssim t^{-\frac{\gamma+1}{\gamma}} \log t$ 
from which the conclusion of Theorem \ref{thm:escape} cannot be deduced,
hence the necessity of Lemma~\ref{lem:upper bound}. 

In any case, together with the estimate $\mu_f(\hI^t)\geq C_ft^{-\frac{1-\alpha}{\gamma}}$ from Theorem \ref{thm:escape}, \eqref{eq:precise alpha} yields
\[
1 \geq \frac{\mu_f(\hI^{t+1})}{\mu_f(\hI^t)} = \frac{\mu_f(\hI^t) - \mu_f(\hI^t \setminus \hI^{t+1})}{\mu_f(\hI^t)}
\geq 1 - \frac{\bar{C} t^{-\frac{\gamma+1}\gamma+\frac\alpha\gamma}}{C_f t^{-\frac{1-\alpha}{\gamma}}} 
= 1 - \frac{\bar{C}}{C_f}t^{-1} \to 1 \mbox{ as } t \to +\infty ,
\]
and a similar conclusion holds for $\alpha =0$ by Lemma~\ref{lem:precise}.
Consequently, we have proved the following limit for every $k \geq 1$ (and $\alpha\in [0,1)$),
\begin{equation}
\label{eq:limit 1}
\lim_{t \to +\infty} \frac{\mu_f(\hI^{t+k})}{\mu_f(\hI^t)} 
= \prod_{i=0}^{k-1}\left(\lim_{t \to +\infty} \frac{\mu_f(\hI^{t+i+1})}{\mu_f(\hI^{t+i})}\right)= 1 .
\end{equation}

Now, since $p \in (0, \frac{\gamma}{\gamma + 1}]$, we can apply Proposition~\ref{pro:invariant} to conclude that the sequence $\left\{\frac{\hT^t_\ast\mu_f}{\mu_f(\hI^t)}\right\}_{t\in\N}$ 
is composed of absolutely continuous probability measures with densities in $\F_p^0$. 
By Lemma~\ref{lem:simple facts}, any of its limit points must have the form 
$\mu_\infty = (1-s_\infty)\mu_{f_\infty}+s_\infty\delta_0$ for some $f_\infty\in \F_p$ and
$s_\infty \in [0,1]$. We want to prove that $s_\infty=1$ for any limit point.   

Let $J\in  \mathcal{J}^{(\ell_H)}$, let $g_{t} :=\hLp_1^{t}f$ and consider 
a converging subsequence $\{g_{t_j}\}_{j\in\N}$ with limit point $(1-s_\infty)f_\infty$. 
(Recall that $\hLp_1^t$ is the normalized transfer operator, see equation 
\eqref{NORMATRANS} above.) 
Since $f_\infty\in \F_p$, the convergence 
$g_{t_j} |_J\to (1-s_\infty)f_\infty|_J$ holds in the uniform topology of 
functions defined on this interval. In particular, its integrals against any bounded measurable
function converge as well on each $J \in \mathcal{J}^{(\ell_H)}$.  

Fixing $k \geq 1$, note that the set $\bigcup\limits_{i=0}^k T^{-i}(H)$ is bounded away from 0 and thus
intersects only finitely many elements of $\mathcal{J}^{(\ell_H)}$.  Thus
the sequence $\{g_{t_j}\}_{j\in\N}$ converges uniformly on this set as well.
Now, we have 
\[
\frac{\mu_f(\hI^{k +t_j})}{\mu_f(\hI^{t_j})} = \frac{|\hLp^{t_j+k} f|_1}{|\hLp^{t_j} f|_1} = \int_I \hLp^k g_{t_j} \, dm
= \int_{\hI^k} g_{t_j} \, dm
= 1 - \int_{\cup_{i=0}^k T^{-i}(H)} g_{t_j} \,  dm ,
\]
using the fact that $\int_{I} g_{t_j} \, dm =1$.

Since the limit of the above expression is 1 by \eqref{eq:limit 1} and the convergence of 
$g_{t_j}$ to $(1-s_\infty)f_\infty$ is uniform on each $J$, we must have $f_\infty \equiv 0$ on 
$\bigcup\limits_{i=0}^k T^{-i}(H)$.  Since $f_\infty \in \F_p$ is log-H\"older continuous
on each $J \in \mathcal{J}^{(\ell_H)}$, we conclude that
$f_\infty \equiv 0$ on any $J$ such that $J \cap \left(\bigcup\limits_{i=0}^k T^{-i}(H)\right) \neq \emptyset$.
Since this holds for all $k$, the transitivity of $T$ implies that we must have 
$f_\infty \equiv 0$ on all $J \in \mathcal{J}^{(\ell_H)}$, {\sl viz.}\ $s_\infty=1$. Since the subsequence is arbitrary, it follows that $s_\infty=1$ for any limit point as desired. 
Theorem \ref{thm:convergence} is proved.

%%%%%%%%%%%%%%%%%%%%%%%%%%%%%%%%%%%%%%%%

\subsection{Proof of Lemma~\ref{lem:precise}}
\label{proof of precise}
We first prove the following auxiliary result. Given $t\geq 1$ and $0\leq s<t$, let 
\[
E^t=\left\{x\in I\ :\ \min\{k\geq 0\ :\ T^k(x)\in J_h\}=t\right\},
\]
and
\[ 
E_s^{s+t}=\left\{x\in T^{-s}(J_h)\ :\ \min\{k> 0\ :\ T^{s+k}(x)\in J_h\}=t\right\}.
\]
\begin{Lem}
\label{lem:one pass}
$m(E^t)\lesssim t^{-\frac{\gamma+1}{\gamma}}$ and $m(E_s^{s,t})\lesssim t^{-\frac{\gamma+1}{\gamma}}$.
%There exists $C_0 > 0$ such that 
%\[
%m(E_1^t) \leq C_0 t^{-\frac{\gamma+1}{\gamma}}\quad\text{and}\quad m(E_1^s\cap E_2^t)\leq C_0 (t-s)^{-\frac{\gamma+1}{\gamma}},\ \forall 0 \leq s < t,t \geq 1.
%\]
\end{Lem}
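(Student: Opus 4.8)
The statement concerns the measure of two sets: $E^t$, the points whose first visit to $J_h$ occurs at time exactly $t$, and $E^{s+t}_s$ (written $E^{s,t}_s$ in the claim, a typo for $E^{s+t}_s$), the points in $T^{-s}(J_h)$ whose next visit to $J_h$ after time $s$ occurs $t$ steps later. The plan is to reduce both estimates to the spacing bounds \eqref{eq:spacing} and the bounded distortion estimates from Lemma~\ref{lem:distortion}, organized by the itinerary of the orbit with respect to the two branches of $T$, equivalently with respect to the intervals $J_n$. The key geometric fact driving everything is that starting in $J_n$ with $n\geq 1$ forces $n$ consecutive iterates in the left branch before landing in $J_0=J_R$, and from $J_0$ one lands freely in either $J_L$ or $J_R$; so an itinerary avoiding $J_h$ for exactly $t$ steps is a concatenation of ``left-runs'' of lengths $n_1, n_2, \dots, n_r$ (each $n_i\geq 1$, each run starting in $J_{n_i}$) with $\sum_i n_i$ close to $t$, subject to a combinatorial avoidance constraint coming from $H\subseteq J_h$.

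First I would treat $E^t$. Decompose $E^t$ according to the number $r$ of passes through the left branch (equivalently, visits to $J_0$) and the lengths $(n_1,\dots,n_r)$ of those runs, with $\sum n_i = t - (\text{number of }J_0\text{-visits}) + O(1)$, so $\sum n_i \geq t - r$. For a fixed itinerary, the measure of the corresponding cylinder is estimated by pushing forward under $T$ until the orbit reaches $J_0$ at each stage and using bounded distortion exactly as in the computation preceding \eqref{eq:condition} in the proof of Lemma~\ref{lem:upper bound}; each left-run of length $n_i$ starting in $J_{n_i}$ contributes a factor $\approx |J_{n_i}| \approx n_i^{-\frac{\gamma+1}{\gamma}}$ after the first, and the branching at each $J_0$-visit contributes a bounded factor $\approx 2^{-1}$. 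Summing over all admissible $(n_1,\dots,n_r)$ with $\sum n_i \gtrsim t$, we get a bound of the form $\sum_r 2^{-r}\sum_{n_1+\cdots+n_r\gtrsim t}\prod_i n_i^{-\frac{\gamma+1}{\gamma}}$. The single-run term ($r=1$) gives exactly $\sum_{n\gtrsim t} n^{-\frac{\gamma+1}{\gamma}}\approx t^{-\frac1\gamma}$, which is \emph{too large}; the point of the ``precisely at time $t$'' refinement is that the avoidance constraint from $H$ being a cylinder in $\cJ^{(\ell_H)}$, not all of $J_h$, removes a definite fraction of mass at each return, producing an extra geometric gain $\sigma^r$ (with $\sigma<1$ from Lemma~\ref{lem:exp escape}) — or alternatively one uses that, for the exact-time constraint, the last run must have a prescribed length forcing $n_r \approx t/r$ which upgrades the tail $\sum_{n\geq t/r}n^{-\frac{\gamma+1}{\gamma}}\approx (t/r)^{-\frac1\gamma}$ and combining with combinatorial factors $\binom{t}{r}$-type bounds and $\sigma^r$ closes the sum to $t^{-\frac{\gamma+1}{\gamma}}$ up to a possible $\log t$, here absent because we are counting exact hitting time rather than survival. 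I would carry out the $r=1$ case carefully first: $E^t$ with a single left-run means the orbit is essentially confined to one branch structure and the measure is $\approx |J_t|\approx t^{-\frac{\gamma+1}{\gamma}}$ directly, which is the heart of the bound; the multi-run contributions are then shown to be lower order via the $\sigma^r$ factor summing against the polynomial pieces using Lemma~\ref{lem:double}.

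The estimate for $E^{s+t}_s$ follows the same scheme but now the orbit starts inside $T^{-s}(J_h)$; by bounded distortion (Lemma~\ref{lem:distortion}(b)) the conditional measure, given that one is in a fixed cylinder of $\cJ^{(s+1)}$ mapped into $J_h$, of the sub-event that the \emph{next} return to $J_h$ is $t$ steps away is comparable (up to multiplicative constants) to $m(E^t\cap J_{n'})/m(J_{n'})$ summed over the relevant starting interval $J_{n'}\ni T^s(x)$, so it reduces to the first estimate; summing over $n'$ and using \eqref{eq:spacing} preserves the bound $t^{-\frac{\gamma+1}{\gamma}}$. \textbf{The main obstacle} I anticipate is not any single estimate but the bookkeeping of the avoidance constraint: one must show that requiring the orbit to dodge the cylinder $H$ (rather than the whole interval $J_h$) at each of its $r$ passes genuinely yields a uniform contraction factor $\sigma<1$ per pass — this is exactly where Lemma~\ref{lem:exp escape} on the induced map $S$ is invoked — and then show that these factors dominate the entropy-type combinatorial growth $\binom{t}{r}$ coming from the number of ways to split $t$ into $r$ runs, so that the sum over $r\geq 2$ is $O(t^{-\frac{\gamma+1}{\gamma}})$ or smaller, leaving $r=1$ as the dominant (and sharp) contribution. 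Once that balance is set up, the remaining sums are routine applications of \eqref{eq:spacing} and Lemma~\ref{lem:double}.
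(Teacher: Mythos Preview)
Your proposal has a genuine gap rooted in a misreading of the definition of $E^t$. The sets $E^t$ and $E_s^{s+t}$ are defined entirely in terms of the first (or next) entry to the \emph{full interval} $J_h$; the hole $H$ plays no role whatsoever in this lemma. So the mechanism you describe --- ``the avoidance constraint from $H$ being a cylinder in $\cJ^{(\ell_H)}$, not all of $J_h$, removes a definite fraction of mass at each return, producing an extra geometric gain $\sigma^r$'' --- is simply not available, because there is no $H$-avoidance constraint here. Your plan to invoke Lemma~\ref{lem:exp escape} to extract a $\sigma^r$ from dodging $H$ at each pass therefore does not apply, and without that factor your sketched sum $\sum_r 2^{-r}\sum_{n_1+\cdots+n_r\gtrsim t}\prod_i n_i^{-(\gamma+1)/\gamma}$ does not close. (You also have an internal inconsistency in the $r=1$ analysis: first you say it contributes $\sum_{n\gtrsim t} n^{-(\gamma+1)/\gamma}\approx t^{-1/\gamma}$, which would be a survival probability; two sentences later you correctly say it is $\approx |J_t|\approx t^{-(\gamma+1)/\gamma}$. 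Only the second is right for an \emph{exact} hitting time.)

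The paper's argument uses a much simpler decomposition that sidesteps all your combinatorial worries. Observe that to reach $J_h$ one must first enter the region $[0,a_{h-1})=\bigcup_{n\geq h}J_n$; before that the orbit lives in $[a_{h-1},1)=\bigcup_{n=0}^{h-1}J_n$, which is uniformly expanding and \emph{automatically} avoids $J_h$. So decompose $E^t$ by the first time $k\in\{0,\dots,t\}$ at which the orbit enters $[0,a_{h-1})$. Once it enters, say into $J_n$ with $n\geq h$, the orbit deterministically marches $J_n\to J_{n-1}\to\cdots\to J_h$, reaching $J_h$ exactly $n-h$ steps later; so the exact-time constraint forces $n=h+t-k$. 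Bounded distortion gives a factor $\approx (h+t-k)^{-(\gamma+1)/\gamma}$ for landing in that specific $J_{h+t-k}$, while the measure of orbits remaining in the uniformly expanding set $[a_{h-1},1)$ for $k$ steps is $\lesssim \sigma_h^k$ (this is where Lemma~\ref{lem:exp escape} is invoked, with $n_S=h-1$; the exponential decay comes from uniform expansion on $[a_{h-1},1)$, not from any property of $H$). One is left with the single sum $\sum_{k=0}^t (h+t-k)^{-(\gamma+1)/\gamma}\sigma_h^k$, which is routinely shown to be $\lesssim t^{-(\gamma+1)/\gamma}$. The estimate for $E_s^{s+t}$ is proved the same way. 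No multi-index convolutions, no $\binom{t}{r}$ factors, and no balancing act are needed.
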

\begin{proof}
The set $E^t$ consists of a collection of intervals in $T^{-t}(I)$.  Using symbolic dynamics as in equation \eqref{eq:ni} above, we label these intervals according to their itinerary up to the first time $k$ ($0 \leq k \leq t$) when they enter $[0, a_{h-1})$, {\sl viz.}\
\begin{equation}
E^t=\bigcup_{k=0}^t\bigcup_{\{\theta_\ell\}_{\ell=0}^{k-1}} J_{\theta_0^{k-1}}\cap T^{-k}(J_{h+t-k}),
\label{eq:landing}
\end{equation}
where the words $\{\theta_\ell\}_{\ell=0}^{k-1}$ are such that $J_{\theta_\ell^{k-1}}\subset [a_{h-1},1)$ for all $0\leq \ell\leq k-1$. Notice that the term $k=0$ actually reduces to $J_{h+t}$. 

Using bounded distortion for the map $T^k$, we get 
\[
\frac{m\left(J_{\theta_0^{k-1}}\cap T^{-k}(J_{h+t-k})\right)}{m\left(J_{\theta_0^{k-1}}\cap T^{-k}([0,a_{h-1}))\right)}\approx\frac{m\left(J_{h+t-k}\right)}{m\left([0,a_{h-1})\right)}\approx (h+t-k)^{-\frac{\gamma+1}{\gamma}}.
\]
Moreover, recall the induced map $S$ and the associated sets $\hI_S^{t}$ defined at the beginning of section \ref{escape}, which we now consider for $n_S=h-1$. For $k\geq 1$,  we actually have
\[
\bigcup_{\{\theta_\ell\}_{\ell=0}^{k-1}} J_{\theta_0^{k-1}}\cap T^{-k}([0,a_{h-1}))\subset \hI_S^{k-1},
\]
hence Lemma~\ref{lem:exp escape} implies the existence of $\sigma_h\in (0,1)$ such that 
\[
m\left(\bigcup_{\{\theta_\ell\}_{\ell=0}^{k-1}} J_{\theta_0^{k-1}}\cap T^{-k}([0,a_{h-1}))\right)\lesssim \sigma_h^k.
\]
For $k=0$, we obviously have $m([0,a_{h-1}))\leq 1$. Grouping all terms together, we finally get 
\[
m(E^t)\lesssim\sum_{k=0}^t (h+t-k)^{-\frac{\gamma+1}{\gamma}} \sigma_h^k.
\] 

We now show that this last sum is $\lesssim t^{-\beta}$ where $\beta=\frac{\gamma+1}{\gamma}$. The terms $k=0$, $k=1$ and $k=t$ satisfy this estimate. Moreover, the remaining sum from $k=2$ to $t-1$ is not larger than the following integral which we compute by integration by parts, 
\begin{align*}
\int_1^t (h+t-x)^{-\beta} \sigma_h^{x} \, dx \;
= \; \; &\frac{\sigma_h(h+t-1)^{-\beta}}{\log\sigma_h^{-1}}-\frac{h^{-\beta}\sigma_h^t}{\log\sigma_h^{-1}}\\
&+\int_1^{\frac{t}{2}} \frac{(h+t-x)^{-\beta-1} \sigma_h^{x}}{\log\sigma_h^{-1}} \, dx+\int_{\frac{t}{2}}^t \frac{(h+t-x)^{-\beta-1} \sigma_h^{x}}{\log\sigma_h^{-1}} \, dx
\end{align*}
The first two terms in the right hand side here are $\lesssim t^{-\beta}$. Moreover, on the first integral, the integrand is at most $\frac{\sigma_h}{\log\sigma_h^{-1}}(h+\frac{t}2)^{-\beta-1}$; hence the desired estimate follows. On the second integral, the integrand is at most $\frac{h^{-\beta-1}}{\log\sigma_h^{-1}}\sigma_h^t$ and then the integral is also dominated as $\lesssim t^{-\beta}$. The proof of the estimate on $m(E^t)$ is complete. 

For $m(E_s^{s+t})$, the argument is similar. We decompose the set as follows
\[
E_s^{s+t}=\bigcup_{k=h+1}^t\bigcup_{\{\theta_\ell\}_{\ell=0}^{s+k-1}} J_{\theta_0^{s+k-1}}\cap T^{-(k+s)}(J_{h+t-k}),
\]
where now the words $\{\theta_\ell\}_{\ell=0}^{s+k-1}$ are such that $J_{\theta_s^{s+k-1}}\subset J_h$ and $J_{\theta_\ell^{s+k-1}}\subset [a_{h-1},1)$ for all $s< \ell\leq s+k-1$. Using the same bounded distortion estimates as above, we get 
\[
m(E_s^{s+t})\lesssim\sum_{k=h+1}^t (h+t-k)^{-\frac{\gamma+1}{\gamma}} \sigma_h^k,
\] 
from which the conclusion immediately follows. 
\end{proof}

We will prove Lemma~\ref{lem:precise} by keeping track of passes
through $J_h$ and concatenating estimates of the
form in Lemma~\ref{lem:one pass}.  Unfortunately, in estimating the
contribution of points that make $k$ passes through $J_h$ before entering $H$,
we obtain a factor $C_0^k$, which is potentially disastrous since $k$ can
increase with $t$.  In order to overcome the effect of this constant, we 
slightly weaken the rate of decay as in the statement of the lemma.

\begin{proof}[Proof of Lemma~\ref{lem:precise}.]
Let $C_0>0$ be the constant in the estimates of Lemma~\ref{lem:one pass} and choose $n_0$ large enough that
\begin{equation}
\label{eq:n_0}
\rho := C_0  2^{\frac{1}{\gamma}+2} \gamma (n_0-1)^{-\frac{1}{\gamma}} < 1 .
\end{equation}

We call a return to $J_h$ {\em long} if there have been at least $n_0$ iterates since the last entry
to $J_h$ (or for the first entry, if it occurs after time $n_0-1$). A return that is not long is called {\em short}.

Given $x \in \hI^{t-1} \setminus \hI^t$, consider the decomposition of $[0,t]$ into segments, labelled
alternating $s_1, \ell_1, s_2, \ell_2, \ldots , s_k, \ell_k, s_{k+1}$ where the segments $\ell_i$ are long returns to $J_h$ and the segments $s_i$ are comprised of one or more short returns to $J_h$ (but the total length $s_i$
of the segment may be greater than $n_0$). Note we must have $\ell_i \geq n_0$ for each $i$ 
but some of the $s_i$ may be 0. 

Let $(s_1,\ldots, s_{k+1}; \ell_1,\ldots, \ell_k; t)\subset \hI^{t-1} \setminus \hI^t$ denote the
set of points with the specified trajectory which fall into the hole 
at time $t$ with $k$ long returns to $J_h$. Let also $\ell = \sum\limits_{i=1}^k \ell_i$.

Recall once again the induced map $S$ and the associated sets $\hI_S^{t}$, which we now consider for $n_S=n_0 + h$. Lemma~\ref{lem:exp escape} implies the existence of $C_3>0$ and $\sigma_0<1$ such that $m(\hI_S^t) \leq C_3 \sigma_0^t$ for all $t$. 
Now choose $C_4 > 0$ sufficiently large such that
\[
C_4 \log \sigma_0^{-1} \geq \frac{\gamma+1}{\gamma} .
\]
We divide our estimate into two cases.

\medskip
\noindent
{\em Case 1:} $t-\ell >  C_4 \log t $.  During short returns, iterates remain in the interval $[a_{n_0+h},1)$. Therefore, the orbits of every point in $(s_1,\ldots, s_{k+1}; \ell_1,\ldots, \ell_k; t)$ spend at least $t - \ell$ iterates inside $[a_{n_0+h},1)$, {\sl i.e.}\ we have $(s_1,\ldots, s_{k+1}; \ell_1,\ldots, \ell_k; t)\subset \hI_S^{t-\ell}$ which implies
\[
m(x \in \hI^{t-1} \setminus \hI^t : t - \ell >  C_4 \log t) \leq C_3 \sigma_0^{C_4 \log t}
\leq C_3 t^{-\frac{\gamma+1}{\gamma}} .
\]

\medskip
\noindent
{\em Case 2:} $t - \ell \leq  C_4 \log t $.  
We fix $k \geq 1$ and estimate the contribution to $\hI^{t-1} \setminus \hI^t$
from points making $k$ long returns to $J_h$.  Now
\[
\begin{split}
m(s_1, \ldots, s_{k+1}; \ell_1, \ldots, \ell_k; t)
= & \; m(  s_{k+1} \mid (s_1, \ldots, s_k; \ell_1, \ldots ,\ell_k)) \\
& \cdot \prod_{i=1}^k m(\ell_i \mid (s_1, \ldots, s_i; \ell_1, \ldots ,\ell_{i-1}))
m(s_i \mid (s_1, \ldots, s_{i-1}; \ell_1, \ldots ,\ell_{i-1})),
\end{split}
\]
where the truncated lists $(s_1, \ldots ,s_i; \ell_1, \ldots ,\ell_{i-1})$ denote the set of points
whose entries to $J_h$ follow the specified itinerary.  Note that by Lemma~\ref{lem:one pass} 
 we have 
\[
m(\ell_i \mid (s_1, \ldots ,s_i; \ell_1, \ldots ,\ell_{i-1})) \leq C_0 \ell_i^{-\frac{\gamma+1}{\gamma}} .
\]
Also, we have 
\[
\sum_{\mbox{\scriptsize relevant $s_i$}}m(s_i \mid (s_1, \ldots, s_{i-1}; \ell_1, \ldots, \ell_{i-1})) 
\leq 1
\]
for each $i$.  Since each sum over $s_i$ holds for $s_1, \ldots, s_{i-1}$ fixed, we apply
the bound above recursively to obtain,
\[
\sum_{s_1,\ldots,s_{k+1}}\prod_{i=1}^{k+1}m(s_i \mid (s_1, \ldots, s_{i-1}; \ell_1, \ldots, \ell_{i-1})) \leq 1,
\]
where the sum runs over relevant $s_1,\ldots,s_{k+1}$. 
Finally, summing the products over the relevant lengths $\ell_i$, we must 
estimate the following expression, where as before $\beta = \frac{\gamma+1}{\gamma}$,
\begin{equation}
\label{eq:big sum}
\sum_{\ell =\lceil t - C_4 \log t\rceil}^t \sum_{\ell_1 = n_0}^{\ell - (k-1)n_0}
\sum_{\ell_2= n_0}^{\ell - \ell_1 - (k-2)n_0} \ldots
\sum_{\ell_{k-1}=n_0}^{\ell - \ell_1 - \ldots - \ell_{k-2} - n_0}
C_0^k (\ell_1 \cdots \ell_{k-1})^{-\beta} 
(\ell - \ell_1 - \ldots - \ell_{k-1})^{-\beta},
\end{equation}
where we have used the fact that $\ell_k = \ell - \ell_1 - \ldots - \ell_{k-1}$.
 We will estimate the iterated sums
one at a time and show the calculation in detail in order to verify that we 
can control the effect of the constant $C_0^k$.

Let $s = \ell - \ell_1 - \ldots - \ell_{k-2}$.  Then the first sum we must estimate is
\begin{equation}
\label{eq:one sum}
\begin{split}
\sum_{\ell_{k-1}=n_0}^{s-n_0} & (s - \ell_{k-1})^{-\beta} 
\ell_{k-1}^{-\beta}
\leq 2\sum_{\ell_{k-1} = n_0}^{\lfloor s/2\rfloor} (s- \ell_{k-1})^{-\beta} 
\ell_{k-1}^{-\beta} 
\leq 2^{1+\beta}s^{-\beta}\sum_{\ell_{k-1} = n_0}^{\lfloor s/2\rfloor} \ell_{k-1}^{-\beta}    \\
& \! \! \! \leq 2^{1+\beta}s^{-\beta}\int_{n_0-1}^{\infty} x^{-\beta} \, dx
\leq 2^{1+\beta} s^{-\beta} \tfrac{(n_0-1)^{1-\beta}}{\beta-1}
= 2^{\frac{1}{\gamma}+2}s^{-\beta}
\gamma (n_0-1)^{-\frac{1}{\gamma}} \leq C_0^{-1} \rho s^{-\beta}
\end{split}
\end{equation}
where the last inequality follows from equation \eqref{eq:n_0}.  Turning now to the second sum in \eqref{eq:big sum}, we must
estimate
\[
\sum_{\ell_{k-2} = n_0}^{\ell - \ell_1 - \ldots - \ell_{k-3} - 2n_0}
(\ell - \ell_1 - \ldots - \ell_{k-2})^{-\beta} \ell_{k-2}^{-\beta},
\]
and setting $s = \ell - \ell_1 - \ldots - \ell_{k-3}$, we see that this sum has the same
form as that in \eqref{eq:one sum} and thus satisfies the same bound (note, we are 
not hurt by the fact that we are subtracting $2n_0$ rather than $n_0$ in the upper limit of this sum
since subtracting $2n_0$ just makes the sum smaller.  Proceeding this way $k-3$ more times,
we arrive at the expression,
\begin{align*}
m(x \in \hI^{t-1} \setminus \hI^t : k \mbox{ long passes with $t - \ell  \leq  C_4 \log t $}) 
&\leq 
\sum_{\ell = \lceil t - C_4\log t\rceil }^t C_0 \rho^{k-1} \ell^{-\beta}\\
&\leq C_0 \rho^{k-1} \lfloor C_4 \log t+1\rfloor \lceil t - C_4 \log t\rceil^{-\frac{\gamma+1}{\gamma}} 
\end{align*}
Finally, summing over $k$ yields the existence of $C>0$ such that 
\[
m(\hI^{t-1} \setminus \hI^t : t - \ell \leq C_4 \log t) \leq C t^{-\frac{\gamma+1}{\gamma}} \log t .
\]
Putting together our estimates from Cases 1 and 2 completes the proof of the lemma.
\end{proof}

%%%%%%%%%%%%%%%%%%%%%%%%%%%%%%%%%%%%%%%%%%%%%%%%%%%%%%%%%%%%%%%%%%%%%%%%%%%%%%%%%%%%%%%%%%

\subsection{General open systems - proof of Theorem~\ref{thm:mean conv}}
\label{cesaro}
The proof requires a preliminary statement. We say a set 
$A \subset \N$ has density $\rho\in [0,1]$ if $\lim\limits_{n \to +\infty}\frac{\#(A \cap [1,n])}{n} = \rho$, where
\# denotes the cardinality of a set. 

Define $\beta_t = \frac{\mu(\hX^{t+1})}{\mu(\hX^{t})}$ for $t\geq 0$ and $\beta_{-1}=\mu(\hX^{0})$.  We obviously have $\beta_t \leq 1$ for all $t\geq -1$ and the desired preliminary statement specifies the density of any set of indexes $t$ in which $\beta_t$ remains bounded 
away from 1. 
\begin{Lem}
\label{lem:rate}
For $\lambda \in (0,1)$, define $A_\lambda = \{ k \in \N : \beta_k \leq \lambda \}$.  Then
$A_\lambda$ must have zero density.
\end{Lem}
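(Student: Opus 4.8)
The plan is to argue by contradiction, supposing that $A_\lambda$ has upper density $\rho > 0$ along some increasing sequence of times, and then to show that this would force $\mu(\hX^t)$ to decay exponentially fast, contradicting the subexponential escape assumption $\limsup_{t\to\infty} \frac{\log\mu(\hX^t)}{t} = 0$ in \eqref{eq:slow}. First I would record the telescoping identity $\mu(\hX^t) = \mu(\hX^0)\prod_{k=0}^{t-1}\beta_k = \prod_{k=-1}^{t-1}\beta_k$, which turns the decay rate into a statement about the product of the $\beta_k$. Since every $\beta_k \le 1$, we have $\frac{\log\mu(\hX^t)}{t} = \frac{1}{t}\sum_{k=-1}^{t-1}\log\beta_k \le \frac{1}{t}\sum_{k \in A_\lambda \cap [0,t-1]} \log\lambda$, because each factor with $k \in A_\lambda$ contributes $\log\beta_k \le \log\lambda < 0$ while the remaining factors contribute at most $0$.

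The key step is then to take $\liminf$ (or rather a suitable subsequential limit) on both sides. If $A_\lambda$ failed to have zero density, there would be an increasing sequence $t_j \to \infty$ with $\frac{\#(A_\lambda \cap [1,t_j])}{t_j} \to \rho'$ for some $\rho' > 0$; along this sequence we would get
\[
\limsup_{j\to\infty} \frac{\log\mu(\hX^{t_j})}{t_j} \le \rho' \log\lambda < 0,
\]
which contradicts the third condition in \eqref{eq:slow}. (A minor bookkeeping point: the shift between indexing $A_\lambda$ over $[1,n]$ versus over $[0,t-1]$, and the harmless extra factor $\beta_{-1}$, only affect the count by a bounded amount and do not change the density, so they can be absorbed without difficulty.) Hence $A_\lambda$ has density zero, and since $\beta_k \le \lambda$ is exactly the complement condition controlled here, the limit defining the density indeed exists and equals $0$.

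I expect the main obstacle to be essentially cosmetic rather than conceptual: one must be careful that the statement asserts $A_\lambda$ has density exactly $0$ (so that the limit $\lim_n \frac{\#(A_\lambda\cap[1,n])}{n}$ exists), not merely upper density $0$ — but since density $0$ is equivalent to upper density $0$ for a subset of $\N$, the contradiction argument above, which rules out positive upper density, suffices. A secondary point worth stating carefully is that we only use $\limsup_{t\to\infty}\frac{\log\mu(\hX^t)}{t}=0$ together with $\beta_k\le 1$; in particular we do not need the $\liminf$ version, which is consistent with the remark in the text that stretched-exponential rates are allowed. This is the crux of why the Ces\`aro averaging in Theorem~\ref{thm:mean conv} works: the density-zero conclusion will later let one discard the ``bad'' times where $\hT^k_*\mu/\mu(\hX^k)$ is not yet close to being $T$-invariant.
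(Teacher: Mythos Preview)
Your overall strategy---telescoping $\mu(\hX^t)=\prod_{k=-1}^{t-1}\beta_k$, bounding the factors with $k\in A_\lambda$ by $\lambda$ and the rest by $1$, and contradicting the subexponential hypothesis---is exactly the paper's. The gap is in your contradiction step. You derive, along a subsequence $t_j$ realizing the upper density $\rho'>0$, that $\limsup_{j}\frac{\log\mu(\hX^{t_j})}{t_j}\le \rho'\log\lambda<0$, and claim this contradicts $\limsup_{t}\frac{\log\mu(\hX^t)}{t}=0$. It does not: since every term $\frac{\log\mu(\hX^t)}{t}$ is nonpositive, the hypothesis merely says that $0$ is a subsequential limit of this sequence, and is perfectly compatible with \emph{other} subsequences staying bounded below $0$. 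A concrete abstract obstruction: set $\beta_k=\lambda$ for $k\in\bigcup_{n\ge 3}[n!,\,2\cdot n!)$ and $\beta_k=1$ otherwise; then $A_\lambda$ has upper density $\tfrac12$ and lower density $0$, yet along $t=(n+1)!$ one has $\frac{\log\mu(\hX^t)}{t}\approx\frac{\log\lambda}{n+1}\to 0$, so the $\limsup$ assumption in \eqref{eq:slow} holds. What your pointwise inequality actually yields, after taking the full $\limsup_t$ and noting that the negative factor $\log\lambda$ converts a $\limsup$ on the right into a $\liminf$, is only that the \emph{lower} density of $A_\lambda$ vanishes---not the upper density.

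The paper's proof uses the same inequality but sets up the contradiction differently: it assumes $A_\lambda$ has density $\rho>0$ (i.e.\ the limit exists and is positive), so that the bound $\mu(\hX^t)^{1/t}\le\lambda^{\#(A_\lambda\cap[0,t-1])/t}$, valid for \emph{all} $t$, gives $\limsup_t\mu(\hX^t)^{1/t}\le\lambda^\rho<1$ directly, with no subsequence involved. Your remark that ``density $0$ is equivalent to upper density $0$'' correctly isolates the case the paper's contradiction hypothesis leaves implicit; but your argument does not close it, since---as the example above shows---ruling out positive \emph{upper} density does not follow from the $\limsup$ assumption in \eqref{eq:slow} via this inequality alone.
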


\begin{proof}
Fix $\lambda \in (0,1)$ and note that if
$A_\lambda$ is finite, it has zero density.  So assume $A_\lambda$ is infinite.
Then $\limsup\limits_{\substack{k \to +\infty \\ k \in A_\lambda}} \beta_k \leq \lambda$.
Equation \eqref{eq:slow} implies 
\begin{equation}
\limsup_{t\to+\infty}\mu(\hX^t)^\frac{1}{t}=\limsup_{t\to+\infty}\exp \left(\frac 1t \log \mu(\hX^t) \right) =1.
\label{LIMITBETA}
\end{equation}
To derive a  contradiction, assume $A_\lambda$ has density $\rho>0$. 
The relation $\mu(\hX^t)=\prod\limits_{k=-1}^{t-1} \beta_k$ then implies 
\[
\mu(\hX^t)^\frac{1}{t}\leq \lambda^\frac{\#(A_\lambda \cap [0,t-1])}{t}\ \text{for all}\ t>0,
\]
from which it follows that $\limsup\limits_{t\to\infty}\mu(\hX^t)^\frac{1}{t}\leq \lambda^\rho<1$, contradicting equation \eqref{LIMITBETA}.
\end{proof}

Passing now to the proof of the theorem, we consider the sequence of probability measures $\{ \mu_t \}_{t \in \N}$ where $\mu_t$ is defined by equation \eqref{CESARO}. By assumption on $X$, this sequence is compact and any limit point $\mu_\infty$ is necessarily a (regular Borel)
probability measure.

Our proof has two steps: (A) $\mu_\infty$ gives zero weight to 
$int \left( \bigcup\limits_{j=0}^i T^{-j}(H) \right)$ for every $i \geq 0$;
(B) if $\mu_\infty$  gives zero weight to the discontinuity set of $\hT$, then
$\mu_\infty$ is an invariant measure for $T$.

Since $T$ is assumed to be nonsingular with respect to $\mu$, the associated transfer operator $\Lp$ acting on $L^1(\mu)$ is well-defined. As before, we also consider the transfer operator $\hLp$ for the system with hole defined by $\hLp^t f = \Lp^t (f 1_{\hX^t})$ for all $t\geq 1$. 

\noindent
(A)  First note that $\hT^t_\ast\mu$ is supported on $X \setminus H$ for each $t \geq0$ so that
$\mu_\infty(H) =0$.  Now define $g_k = \frac{\hLp^k 1}{\mu(\hX^k)}$ for $k \geq 1$.  
For each $i \geq 1$, $k \geq 1$, we have (using the composition property pointed out at the
end of Section~\ref{setting})
\[
\beta_{k+i-1} \cdots \beta_{k}= 
\frac{\int_X \hLp^i(\hLp^k 1) \, d\mu}{\mu(\hX^k)}
= \int_{\hX^i}  g_k \, d\mu = 1 - \int_{\bigcup\limits_{j=0}^{i} T^{-j}(H)} g_k \, d\mu ,
\]
since $\int_{X \setminus H} g_k =1$.  Choose $\lambda \in (0,1)$ and let $A_\lambda$ 
be as defined in Lemma~\ref{lem:rate}.  Given $i\geq 1$, let
\begin{equation}
\label{eq:A}
A_\lambda^i=A-[0,i-1]
\end{equation}
denote the translates of elements of $A_\lambda$ by
some integer at most $i-1$.  Note that $A_\lambda^i$ still has frequency zero.
Then since $\liminf\limits_{k\to +\infty} \beta_{k+i-1} \cdots \beta_k \geq \lambda^i$ 
as long as we avoid the 
exceptional set $A_\lambda^i$, we have
\[
\limsup_{\substack{k \to +\infty \\ k \notin A_\lambda^i}} \int_{\bigcup\limits_{j=0}^i T^{-j}(H)} g_k \, d\mu
\leq 1 - \lambda^i \, .
\]
Since $d\mu_{t_j} =\frac{1}{t_j} \sum\limits_{k=0}^{t_j-1} g_k d\mu$ and $A_\lambda^i$ has
zero density, we conclude 
$\mu_\infty\Big(int \big( \bigcup\limits_{j=0}^{i} T^{-j}(H) \big) \Big) \leq 1 - \lambda^i$.  
Since $\lambda$ is arbitrary, it must be that 
$\mu_\infty\Big(int \big( \bigcup\limits_{j=0}^{i} T^{-j}(H) \big) \Big)=0$.
Since $i$ is arbitrary, $\mu_\infty$ must be supported on
$X \setminus \Big( \bigcup\limits_{i=0}^\infty int \big( \bigcup\limits_{j=0}^{i} T^{-j}(H) \big) \Big)$, 
and therefore is singular with respect to $\mu$ by assumption \eqref{eq:slow}.

\noindent
(B)  For any test function $\vf \in C^0(X)$, we have
\[
\begin{split}
\hT_* \mu_{t_j}(\vf) & = \frac{1}{t_j} \sum_{k=0}^{t_j-1} \frac{\int \hLp^{k+1}1 \, \vf \, d\mu}{\mu(\hX^k)}
=  \frac{1}{t_j} \sum_{k=0}^{t_j-1} \frac{\int \hLp^{k+1}1\, \vf \, d\mu}{\mu(\hX^{k+1})} \beta_{k} \\
& = \frac{1}{t_j} \sum_{k=0}^{t_j-1} \frac{\int \hLp^k 1\, \vf \, d\mu}{\mu(\hX^{k})} \beta_{k-1}
+ \frac{1}{t_j} \left( \frac{\int \hLp^{t_j} 1 \, \vf \, d\mu}{\mu(\hX^{t_j-1})} - \int_{\hX^0} \vf \, d\mu\right).
\end{split}
\]
Again choose $\lambda \in (0,1)$, define $A_\lambda$ as in Lemma~\ref{lem:rate}
and $A^1_\lambda$ as in \eqref{eq:A}.
The discontinuity set of $\vf \circ \hT$ is contained in the discontinuity set of $\hT$, and since
we assume that $\mu_\infty$ gives zero measure to this set, we may pass to the limit 
(see e.g.~\cite[Theorem 5.2(iii)]{billingsley}) to obtain
\[
\begin{split}
|\hT_* \mu_\infty(\vf) - \mu_\infty(\vf)| 
& \leq \lim_{j \to +\infty} \frac{1}{t_j} \sum_{\substack{k=0 \\ k \in A^1_\lambda}}^{t_j-1}
\frac{|\int \hLp^k1 \, \vf \, d\mu|}{|\hLp^k1|_1} |\beta_{k-1}-1| 
+ \frac{1}{t_j} \left( \frac{|\vf|_\infty |\hLp^{t_j}1|_1}{|\hLp^{t_j-1}1|_1} + |\vf|_\infty \right)  \\
& \qquad + \lim_{j \to +\infty} \frac{1}{t_j} \sum_{\substack{k=0 \\ k \notin A^1_\lambda}}^{t_j-1}
\frac{|\int \hLp^k1 \, \vf \, d\mu|}{|\hLp^k1|_1} |\beta_{k-1}-1| \\
& \leq |\vf|_\infty (1-\lambda)
\end{split}
\]
where we have used the fact that the set of $k$ for which $\beta_{k-1} \leq \lambda$
has density 0, $\beta_k \leq 1$,
and $|\int \hLp^k 1\, \vf \, d\mu|/|\hLp^k1|_1 \leq |\vf|_\infty$ for each $k$. 
Since $\lambda$ is arbitrary, we must have $\hT_* \mu_\infty(\vf) = \mu_\infty(\vf)$.

Furthermore, since $\partial H \cup \hT^{-1} (\partial H)$ 
is contained in the discontinuity set for $\hT$, 
by assumption
we have $\mu_\infty(\partial H \cup \hT^{-1} (\partial H))=0$ and 
this together with part (A) implies $\mu_\infty(H \cup T^{-1} (H)) =0$, i.e.~$\mu_\infty(\hX^1) = 1$.
Thus for $\vf \in C^0(X)$,
\[
\mu_\infty(\vf) = \hT_* \mu_\infty(\vf) = \mu_\infty(\vf \circ T \cdot 1_{\hX^1})
= \mu_\infty(\vf \circ T) = T_*\mu_\infty(\vf) .
\]
Since this identity holds for all $\vf \in C^0(X)$ and $\mu_\infty$ is a regular Borel measure, 
it follows from the uniqueness statement of the 
Riesz-Markov Theorem \cite[Theorem 13.23]{royden} that $T_*\mu_\infty = \mu_\infty$
so that $\mu_\infty$ is an invariant measure for $T$.  Now the
 invariance of $\mu_\infty$ implies
$\mu_\infty(T^{-j}(H)) = 0$ for each $j \in \mathbb{N}$ so that $\mu_\infty(\hX^\infty)=1$.
\bigskip

\noindent
{\bf Acknowledgements} 

\noindent
We are grateful to L.-S.~Young for stimulating discussions. B.F.~thanks the Courant Institute for hospitality, where part of this work was done. M.D.~is partially supported by NSF grant DMS 1101572 and B.F.~is partially supported by EU FET Project No. TOPDRIM 318121.

%%%%%%%%%%%%%%%%%%%%%%%%%%%%%%%%%%%%%%%%%%%%%%%%%%%%%%%%%%%%%%%%%%%%%%%%%%%%%%%%%%%%%%%%%%
%%%%%%%%%%%%%%%%%%%%%%%%%%%%%%%%%%%%%%%%%%%%%%%%%%%%%%%%%%%%%%%%%%%%%%%%%%%%%%%%%%%%%%%%%%

%\bibliographystyle{amsplain}
%\bibliography{BiblioFernandez}

\begin{thebibliography}{9999}
\bibitem{AB10} V.S.~Afraimovich and L.A.~Bunimovich, \emph{Which hole is leaking the most: a topological approach to study open systems}, Nonlinearity {\bf 23} (2010), 643-656.

\bibitem{billingsley}  P.~Billingsley, \emph{Convergence of Probability Measures}, New York: John Wiley and Sons, 1968.

  \bibitem{bdm} H.~Bruin, M.F.~Demers and I.~Melbourne, \emph{Existence and 
    convergence properties of physical measures for certain dynamical systems with holes}, Ergod. Th. \& Dynam. Sys. {\bf 30} (2010), 687-728.
  
  \bibitem{buni dett 1} L.A.~Bunimovich and C.P.~Dettmann, \emph{Open circular billiards and the Riemann hypothesis}, Phys. Rev. Lett. {\bf 94} (2005), 100201.
  
  \bibitem{buni dett 2} L.A.~Bunimovich and C.P.~Dettmann,  \emph{Peeping at chaos: Nondestructive monitoring of chaotic systems by measuring long-time escape rates}, EPL {\bf 80}  (2007), 40001.
  
  \bibitem{BY10} L.A.~Bunimovich and A.~Yurchenko, \emph{Where to place a hole to achieve fastest escape rate}, Israel J. Math. (2010)
 
 \bibitem{cencova}  N.N.~\v{C}encova, \emph{A natural invariant measure on Smale's horseshoe}, Soviet Math. Dokl.
    {\bf 23} (1981), 87-91.
    
  \bibitem{chernov m1}  N.~Chernov and R.~Markarian, \emph{Ergodic properties of Anosov maps with rectangular 
    holes}, Bol. Soc. Bras. Mat. {\bf 28} (1997), 271-314.
  
  \bibitem{chernov m2}  N.~Chernov and R.~Markarian, \emph{Anosov maps with rectangular holes.  Nonergodic 
    cases}, Bol. Soc. Bras. Mat. {\bf 28} (1997), 315-342.
    
   \bibitem{chernov mt1}  N.~Chernov, R.~Markarian and S.~Troubetskoy, \emph{Conditionally invariant measures 
    for Anosov maps with small holes}, Ergod. Th. \& Dynam. Sys. {\bf 18} (1998), 1049-1073.
    
  \bibitem{chernov mt2}  N.~Chernov, R.~Markarian and S.~Troubetskoy,
    \emph{Invariant measures for Anosov maps with small holes},  Ergod. Th. and Dynam. Sys. {\bf 20} (2000), 1007-1044.
    
  \bibitem{chernov bedem}  N.~Chernov and H.~van den Bedem, 
  \emph{Expanding maps of an interval with holes}, 
    Ergod. Th. and Dynam. Sys. {\bf 22} (2002), 637-654.    
  
    \bibitem{collet existence}  P. Collet, S. Mart\'{i}nez and V. Maume-Deschamps, \emph{On the existence of 
    conditionally invariant probability measures in dynamical systems}, Nonlinearity {\bf 13} (2000), 1263-1274.
    
  \bibitem{corrigendum}  P. Collet, S. Mart\'{i}nez and V. Maume-Deschamps, Corrigendum: \emph{On the existence of 
    conditionally invariant probability measures in dynamical systems}, Nonlinearity {\bf 17} (2004), 1985-1987.
    
  \bibitem{collet ms1}  P. Collet, S. Mart\'{i}nez and B. Schmitt, \emph{The Yorke-Pianigiani measure and 
    the asymptotic law on the limit Cantor set of expanding systems}, Nonlinearity {\bf 7} (1994), 1437-1443.
    
  \bibitem{demers exp}  M.F. Demers, \emph{Markov Extensions for Dynamical Systems with Holes:  An Application to
    Expanding Maps of the Interval}, Israel J. Math. {\bf 146} (2005), 189-221.
    
  \bibitem{demers logistic}  M.F. Demers, \emph{Markov Extensions and Conditionally Invariant Measures for 
    Certain Logistic Maps with Small Holes}, Ergod. Th. \& Dynam. Sys. {\bf 25} (2005), 1139-1171.
    
      \bibitem{demers young} M.F.~Demers and L.-S.~Young, \emph{Escape rates and conditionally 
    invariant measures}, Nonlinearity {\bf 19} (2006), 377-397.
    
      \bibitem{demers liverani} M.F.~Demers and C.~Liverani, \emph{Stability of statistical properties 
    in two-dimensional piecewise hyperbolic maps}, Trans. Amer. Math. Soc. {\bf 360} (2008), 4777-4814.        
     
      \bibitem{dwy1} M.F.~Demers, P.~Wright and L.-S.~Young, \emph{Escape rates and physically relevant measures
      for billiards with small holes}, Commun. Math. Phys. {\bf 294} (2010), 353-388.
      
    \bibitem{demers wright}  M.F.~Demers and P.~Wright, \emph{Behavior of the escape rate function in hyperbolic dynamical systems}, Nonlinearity {\bf 25} (2012), 2133-2150.
          
    \bibitem{demers billiard}   M.F.~Demers, \emph{Dispersing billiards with small holes}, to appear in {\em Ergodic Theory, Open Dynamics and Coherent Structures}, Springer Proceedings in Mathematics.
    
   \bibitem{demers infinite}   M.F.~Demers, \emph{Escape rates and physical measures for the infinite horizon Lorentz gas with holes}, Dynamical Systems: An International Journal,
   {\bf 28}:3 (2013), 393-422.  
   
    \bibitem{dettmann} C.P.~Dettmann and O.~Georgiou, \emph{Survival probability
  for the stadium billiard}, Physica D {\bf 238} (2009), 2395-2403.
  
  \bibitem{dettmann leonel} C.P.~Dettmann and E.D.~Leonel, \emph{Escape and transport for an open bouncer: Stretched exponential decays}, Physica D {\bf 241} (2012), 403-408.    
  
  \bibitem{ferrari} P.A. Ferrari, H. Kesten, S. Mart\'{i}nez, and P. Picco, \emph{Existence of quasi-stationary distributions. 
    A renewal dynamical approach}, Ann. Prob. {\bf 23} (1995), 501-521. 
    
  \bibitem{froyland} G.~Froyland, R.~Murray and O.~Stancevic, \emph{Spectral 
  degeneracy and escape dynamics for intermittent maps with a hole}, 
  Nonlinearity, {\bf 24} (2011), 2435-2463.  
  
  \bibitem{h young}  A. Homburg and T. Young, \emph{Intermittency in families of unimodal maps}, Ergod. Th. \& Dynam. Sys.
    {\bf 22} (2002), 203-225.
    
  \bibitem{KL09}   G.~Keller and C.~Liverani, \emph{Rare events, escape rates and quasistationarity: some exact formulae}, J. Stat. Phys. {\bf 135}:3 (2009), 519-534.
    
  \bibitem{liverani maume}  C. Liverani and V. Maume-Deschamps, \emph{Lasota-Yorke maps with holes: conditionally 
    invariant probability measures and invariant probability measures on the survivor set}, Ann. Inst. Henri
    Poincar\'{e} Prob. Stat., {\bf 39} (2003), 385-412.
    
    \bibitem{LSV99}
C.~Liverani, B.~Saussol, and S.~Vaienti, \emph{A probabilistic approach to
  intermittency}, Ergod. Th. \& Dynam. Sys. \textbf{19} (1999), 671-685.
    
  \bibitem{lopes mark}  A. Lopes and R. Markarian, \emph{Open Billiards:  Cantor sets, invariant and conditionally 
    invariant probabilities}, SIAM J. Appl. Math.  {\bf 56} (1996), 651-680.
    
  \bibitem{pianigiani yorke}  G. Pianigiani and J. Yorke,  \emph{Expanding maps on sets which are almost invariant: 
    decay and chaos}, Trans. Amer. Math. Soc. {\bf 252} (1979), 351-366.
    
  \bibitem{richardson}  P.A. Richardson, Jr., \emph{Natural Measures on the Unstable and Invariant Manifolds
    of Open Billiard Dynamical Systems}, Doctoral Dissertation, Department of Mathematics, University of North Texas,
    (1999).  
    
  \bibitem{royden} H.L.~Royden, \emph{Real Analysis}, 3rd edition, New Jersey: Prentice Hall, 1988.
    
 \bibitem{seneta} E.~Seneta and D.~Vere-Jones, \emph{On quasi-stationary distributions
 in discrete-time Markov chains with a denumerable infinity of states}, J. Appl. Prob.
 {\bf 3} (1966), 403-434.
 
  \bibitem{vere} D. Vere-Jones, \emph{Geometric ergodicity in denumerable Markov chains}, Quart. J. Math. {\bf 13} (1962), 7-28.
  
  \bibitem{yaglom} A.M.~Yaglom, \emph{Certain limit theorems of the theory of
  branching stochastic processes}, (in Russian) Dokl. Akad. Nauk. {\bf 56} (1947), 795-798. 
  
  \bibitem{young polynomial}  L.-S. Young, \emph{Recurrence times and rates of mixing}, Israel J. Math. {\bf 110} 
    (1999), 153-188.
\end{thebibliography}
\end{document}